\newtheorem{theorem}[equation]{Theorem}
\newtheorem{lemma}[equation]{Lemma}
\newtheorem{corollary}[equation]{Corollary}
\newtheorem{conj}[equation]{Conjecture}
\newtheorem{problem}[equation]{Problem}
\theoremstyle{remark}
\newtheorem{remark}[equation]{Remark}
\theoremstyle{definition}
\newtheorem{cond}[equation]{Condition}
\newtheorem{condition}[equation]{Condition}
\newtheorem{example}[equation]{Example}
\numberwithin{equation}{subsection}
\renewcommand{\qed}{\hspace*{\fill} \setlength{\unitlength}{1mm}
\begin{picture}(2.5,2.5)
      \put(0,0){\framebox(2.5,2.5){}}
  \end{picture}
\setlength{\unitlength}{1pt}}
\newcommand{\supp}{\operatorname{supp}}
\newcommand{\re}{\operatorname{Re}}
\newcommand{\im}{\operatorname{Im}}
\newcommand{\diam}{\operatorname{diam}}
\newcommand{\sign}{\operatorname{sign}}
\newcommand{\Tr}{\operatorname{Tr}}
\newcommand{\eps}{\varepsilon}
\newcommand{\Vol}{\operatorname{Vol}}
\newcommand{\de}{{\delta}}
\newcommand{\vk}{{\varkappa}}
\newcommand{\la}{{\lambda}}
\newcommand{\si}{{\sigma}}
\newcommand{\vf}{\varphi}
\newcommand{\C}{\mathbb{C}}
\newcommand{\Z}{\mathbb{Z}}
\newcommand{\R}{\mathbb{R}}
\newcommand{\Sbb}{\mathbb{S}}
\newcommand{\MC}{\mathcal{M}}
\newcommand{\dr}{\mathrm{d}}
\def\leq {\leqslant}
\def\geq {\geqslant}
\begin{document}
\title[Average growth of the spectral function]
{Average growth of the spectral function\\ on a Riemannian manifold}

\title[Average growth of the spectral function]
{Average growth of the spectral function\\ on a Riemannian manifold}
\author[H. Lapointe]{Hugues Lapointe}
\address{D\'e\-par\-te\-ment de math\'ematiques et de
sta\-tistique, Univer\-sit\'e de Mont\-r\'eal, CP 6128 succ
Centre-Ville, Mont\-r\'eal, QC  H3C 3J7, Canada.}
\email{lapointe@dms.umontreal.ca}

\author[I. Polterovich]{Iosif Polterovich}
\address{D\'e\-par\-te\-ment de math\'ematiques et de
sta\-tistique, Univer\-sit\'e de Mont\-r\'eal, CP 6128 succ
Centre-Ville, Mont\-r\'eal, QC  H3C 3J7, Canada.}
\email{iossif@dms.umontreal.ca}

\author[Y. Safarov]{Yuri Safarov}
\address{Department of Mathematics, King's College,  Strand, London WC2R 2LS, UK}
\email{yuri.safarov@kcl.ac.uk}

\begin{abstract}
We study average growth of the spectral function of the Laplacian on
a Riemannian manifold. Two types of averaging are considered: with
respect to the spectral parameter and  with respect to a point on a
manifold. We obtain as well related estimates of the growth of the
pointwise $\zeta$-function along vertical lines in the complex
plane. Some examples and open problems regarding almost periodic
properties of the spectral function are also discussed.
\end{abstract}

\keywords{Laplace operator, spectral function, Riemannian manifold}
\subjclass[2000]{Primary: 58J50. Secondary: 35P20}

\maketitle

\section{Introduction}\label{intro}
\subsection{Notation}
Throughout the paper, we use the following notation.
\begin{enumerate}
\item[$\bullet$]
$C$ denotes various positive constants whose precise values are not
important for our purposes; dependence on various parameters is
indicated by lower indices.
\item[$\bullet$]
$d_{x,y}$ is the Riemannian distance between $x$ and $y$.
\item[$\bullet$]
$\R_+:=(0,+\infty)$.
\item[$\bullet$]
$t,s,\la,\mu,\tau\in\R$ and $\xi\in\R^d$.
\item[$\bullet$] $\la_+:=\max\{\la,0\}$,
    $\lceil\la\rceil=\min\{n\in \mathbb{Z}|\, \la \leq n\}$ is the
    ceiling function and $\lfloor \la \rfloor=\max\{n\in \mathbb{Z}|\, \la \geq n\}$ is the
    floor function.
\item[$\bullet$]
$\hat f(\xi)=\int f(x) e^{-ix\xi}\, \dr x$ is the Fourier transform
of $f$ and $(f)^\vee$ is the inverse Fourier transform, so that
$(\hat f)^\vee=f$.
\item[$\bullet$]
$\,*\,$ denotes the convolution on $\R$.
\item[$\bullet$]
$\Gamma$ is the gamma-function and $\,\binom mk\,$ are the binomial
coefficients.
\item[$\bullet$]
$J_\alpha$ is the Bessel function of the first kind of order
$\alpha$.
\end{enumerate}

Let $f$ and $g$ be real-valued functions on $\R_+$, and let $g>0$.
We write
\begin{enumerate}
\item[$\bullet$]
$f(\la)=O(\la^p)\,$ if
$\,\limsup_{\la\to+\infty}\la^{-p}\,f(\la)<\infty$;
\item[$\bullet$]
$f(\la)=o(\la^p)\,$ if $\,\lim_{\la\to+\infty}\la^{-p}\,f(\la)=0$;
\item[$\bullet$]
$f(\la)=O(\la^{-\infty})\,$ if $\,\la^{m}f(\la)\to0\,$ as
$\,\la\to+\infty\,$ for all $\,m\in\R_+\,$;
\item[$\bullet$]
$f(\la)\ne o(g(\la))$ if there exists a sequence $\mu_n\to+\infty$
such that $\frac{|f(\mu_n)|}{g(\mu_n)}\geq C>0$ for all $n$;
\item[$\bullet$]
$f(\la)\gg g(\la)$ if there exists $\la_0$ such that
$f(\la)>C\,g(\la)$ for all $\la>\la_0$.
\end{enumerate}

\subsection{Spectral function}
Let $M$ be a compact Riemannian $n$-dimensional manifold and
$\Delta$ be the Laplacian on $M$ with the eigenvalues
$0=\la_0<\la_1^2\leq \la_2^2 \leq  \dots \la_j^2 \leq \dots$ and the
corresponding orthonormal basis of eigenfunctions $\{\phi_j(x)\}$.
Let $N_{x,y}(\la)$ be the {\it spectral function},
\begin{equation}
\label{specfundef} N_{x,y}(\la)\ :=\
\sum_{0<\la_j<\la}\phi_j(x)\,\overline{\phi_j(y)}\,,
\end{equation}
The spectral function $N_{x,y}(\la)$ is the integral kernel of the
spectral projection of the operator $\,\sqrt\Delta\,$, corresponding
to the interval $\,(0,\la)\,$. This way the spectral function could
be defined in the non-compact case as well, see Example \ref{Rn}.

By the Weyl formulae,
\begin{equation}\label{intro:bound1}
N_{x,x}(\la)\ =\ \frac{\la^{n}}{(4\pi)^{\frac{n}{2}}
\Gamma(\frac{n}{2}+1)}\ +\ O(\la^{n-1})
\end{equation}
and
\begin{equation}\label{intro:bound2}
N_{x,y}(\la)\ =\ O(\la^{n-1})
\end{equation}
for all $x \ne y\in M$ (see \cite{Ava, Lev, Ho1}. The estimates
\eqref{intro:bound1} and \eqref{intro:bound2} are sharp and attained
on a round sphere. However, the bounds can be improved to
$o(\la^{n-1})$ instead of $O(\la^{n-1})$, provided that the set of
geodesic loops originating at $x$ and the set of geodesics joining
$x$ and $y$ are of measure zero (see \cite{S1} or \cite{SV}).

The following condition  is used to prove the lower bounds on the
spectral function (see subsection \ref{proof-lower:theorem}).
\begin{cond}\label{intro:non-conj}
The points $x$ and $y$ are not conjugate along any shortest geodesic
segment joining them.
\end{cond}
It follows immediately from \cite[Corollary 18.2]{Mil} that for each
$x \in M$,  Condition \ref{intro:non-conj} is true for  almost all
$y \in M$. As was shown in \cite[Theorem 1.1.3]{JP} that Condition
\ref{intro:non-conj} implies
\begin{equation}\label{intro:bound3}
N_{x,y}(\la)\ \ne\ o\left(\la^{\frac{n-1}{2}}\right)\,.
\end{equation}

\subsection{Discussion}
\label{intro:disc} Let $\si_{x,y}(t)$ be defined by
$(\si_{x,y}(t))^\vee=\sum_j\phi_j(x)\,\phi_j(y)\,\de(\la-\la_j)\,.$
By the spectral theorem, $\si_{x,y}(t)$ coincides with the
distributional kernel of the operator $\exp(-it\sqrt{\Delta})\,$ or,
in other words, with the fundamental solution of the corresponding
hyperbolic equation. Proofs of
\eqref{intro:bound1}--\eqref{intro:bound3} are based on the study of
singularities of the distribution $\si_{x,y}(t)$, which is usually
done by means of Fourier integral operators (see, for example,
\cite{DG} or \cite{SV}). It is well known that these singularities
lie in the set ${\mathcal L}_{x,y}$ of lengths of all geodesic
segments $x,y \in M$. In particular, $\si_{x,y}(t)$ with $y\ne x$ is
infinitely smooth in a neighbourhood of the origin, whereas
$\si_{x,x}(t)$ has a strong singularity at $t=0$ related to the main
term in the Weyl formula \eqref{intro:bound1}. If
Condition~\ref{intro:non-conj} is fulfilled then the singularity at
$\,t=d_{x,y}\,$ can be explicitly described, which leads to the
lower bound \eqref{intro:bound3}.

The purpose of this paper is to study the {\it average} growth of
$N_{x,y}(\la)$, where averaging is considered either with respect to
the spectral parameter $\la$ or with respect to a point $y$. The
following examples indicate that, while both estimates
\eqref{intro:bound2} and \eqref{intro:bound3} are sharp,  the
typical order of growth of the spectral function is more likely to
be $\la^{\frac{n-1}{2}}$ rather than $\la^{n-1}$.

\begin{example}\label{Rn}(cf. \cite[Example 4.1]{Peetre})
For the Laplacian in $\R^n$ the spectral function is given by
$$
N_{x,y}(\la)\ =\ \frac{1}{(2\pi)^n} \int_{|\xi| \leq \la}
e^{i\langle x-y,\xi\rangle}\,\dr\xi\,
=\frac{\la^n}{(2\pi)^n}\,\hat{\chi}_{B^n}(\la\,(x-y))\,
$$
where $\,\chi_{B^n}\,$ is the characteristic function of the unit
ball. Taking into account \cite[formula (4.4)]{Peetre} and
\cite[formula 3.771(8)]{GR},  one gets
$\,N_{x,y}(\la)=\mathcal{N}(\la,d_{x,y})\,$ where
\begin{equation}
\label{specrn} \mathcal{N}(\la,d_{x,y})\ :=\ (2\pi)^{-\frac{n}{2}}
\,d_{x,y}^{\,-\frac{n}{2}}\, \la^{\frac{n}{2}}\,
J_{\frac{n}2}\left(d_{x,y}\,\la\right)
\end{equation}
and $\,d_{x,y}=|x-y|\,$. Using \cite[formula 8.451(1)]{GR} we obtain
for all $x\ne y$:
\begin{equation}
\label{asympchar}  \mathcal{N}(\la,d_{x,y})\ =\ \frac{2\,
\la^{\frac{n-1}{2}}}{(2\pi
d_{x,y})^{\frac{n+1}{2}}}\,\sin\left(\la\,d_{x,y}-
\frac{(n-1)\pi}{4}\right) +
O\left(\frac{\la^{\,(n-3)/2}}{d_{x,y}^{\,\,(n+3)/2}}\right).
\end{equation}
\end{example}

\begin{example}\label{intro:spheres}
Let $\Sbb^n$ be a round sphere of dimension $n\geq 2$. Given
$x\in \Sbb^n$, denote by  $-x$ the diametrically opposite point.
Then $N_{x,y}(\la)=O\left(\la^{\frac{n-1}{2}}\right)$ for all $y
\neq \pm x$ and $N_{x,-x}(\la)\ne o(\la^{n-1})$.
\end{example}

\subsection{Plan of the paper}
The paper is organized as follows. In the next section we present
our main results. In Section \ref{sect-conj} we discuss some
examples and open problems regarding almost periodic properties of
the spectral function. In Section \ref{proof-upper} we give proofs
of the upper bounds formulated in the subsections 2.1--2.4. Proofs
of the lower bounds stated in the subsection 2.5 are given in
Section \ref{proof-lower}. All examples are justified in
Section~\ref{examples}. Finally, in Section \ref{zeta} we establish
the estimates on the $\zeta$-function presented in the subsection
2.6.


\section{Main results}\label{main}
\subsection{Average over the manifold}
\label{main:upper} The principal object of study in the present
paper is the {\it rescaled spectral function}
\begin{equation}\label{intro:specfrenorm}
\tilde N_{x,y}(\la)\ :=\ \la^{\frac{1-n}{2}}\,N_{x,y}(\la)
\end{equation}
on a compact $n$-dimensional Riemannian manifold $M$. Note that, by
\eqref{asympchar}, in the Euclidean case the rescaled spectral
function $\,\tilde {\mathcal{N}}(\la,d_{x,y}):=\la^{\frac{1-n}{2}}\,
\mathcal{N}(\la,d_{x,y})\,$ is bounded for each fixed $x\ne y\,$.
Our main result is

\begin{theorem}\label{upper:theorem}
For any compact $n$-dimensional Riemannian manifold $M$ there exists
a constant $\,C_M\,$ such that
\begin{equation}\label{upper:int0}
\int_M\left|\tilde N_{x,y}(\la)- \tilde
{\mathcal{N}}(\la,d_{x,y})\right|^2\,\dr y\ \leq\
C_M\,,\qquad\forall\, x\in M\,,\ \forall\,\la\in\R_+\,,
\end{equation}
where $d_{x,y}$ is the Riemannian distance between $x,y \in M$.
\end{theorem}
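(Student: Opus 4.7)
The plan is to expand the square and exploit a threefold cancellation of Weyl main terms. Writing
\begin{equation*}
\int_M\bigl|\tilde N_{x,y}(\la)-\tilde{\mathcal{N}}(\la,d_{x,y})\bigr|^2\,\dr y \,=\, \la^{1-n}\bigl(I_1-2\,\re\,I_2+I_3\bigr),
\end{equation*}
with $I_1:=\int_M|N_{x,y}(\la)|^2\,\dr y$, $I_2:=\int_M N_{x,y}(\la)\,\mathcal{N}(\la,d_{x,y})\,\dr y$ and $I_3:=\int_M|\mathcal{N}(\la,d_{x,y})|^2\,\dr y$, the goal is to show that each $I_k$ equals the Weyl main term $W(\la):=\la^n/((4\pi)^{n/2}\Gamma(n/2+1))$ modulo $O(\la^{n-1})$. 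Since $W-2W+W=0$, the bracket is then $O(\la^{n-1})$ uniformly in $x$, and multiplication by $\la^{1-n}$ yields the desired uniform bound.

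The term $I_1$ is immediate: by orthonormality of $\{\phi_j\}$ and \eqref{specfundef}, $I_1=\sum_{0<\la_j<\la}|\phi_j(x)|^2=N_{x,x}(\la)=W(\la)+O(\la^{n-1})$ by \eqref{intro:bound1}. For $I_3$ I would pass to geodesic normal coordinates $u=\exp_x^{-1}y$ inside the injectivity ball, so that $d_{x,y(u)}=|u|$ and $\sqrt{\det g(u)}=1+O(|u|^2)$. The tail outside the injectivity ball is controlled via $\mathcal{N}(\la,r)^2=O(\la^{n-1}r^{-n-1})$ from \eqref{asympchar}, contributing $O(\la^{n-1})$. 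The interior integral equals $\int_{\R^n}\mathcal{N}(\la,|u|)^2\,\dr u$ up to the volume distortion; the Euclidean integral equals $\mathcal{N}(\la,0)=W(\la)$ because $\mathcal{N}(\la,|\cdot|)$ is the kernel of a self-adjoint projection on $L^2(\R^n)$, and the distortion contributes $O(\la^{n-1})$ by a direct estimate using $|u|^2\,\mathcal{N}(\la,|u|)^2$.

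The substantive step is $I_2$. Expanding via \eqref{specfundef} gives $I_2=\sum_{\la_j<\la}\phi_j(x)\,(A\phi_j)(x)$, where $A$ is the integral operator with real symmetric kernel $\mathcal{N}(\la,d_{x,y})$. The heuristic is that $A$ faithfully models the spectral projector of $\sqrt{\Delta}$ on $M$, so $A\phi_j\approx\chi_{\la_j<\la}\phi_j$, whence $I_2\approx N_{x,x}(\la)=W(\la)+O(\la^{n-1})$. To make this rigorous I would use the Fourier representation $\mathcal{N}(\la,|u|)=(2\pi)^{-n}\int_{|\xi|<\la}e^{i\xi\cdot u}\,\dr\xi$ valid in normal coordinates at $x$, reducing $(A\phi_j)(x)$ (modulo a smooth cut-off inside the injectivity ball, with complement handled by a tail estimate) to an oscillatory integral with phase $\xi\cdot u$. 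Invoking the Hadamard parametrix / FIO description of the spectral cluster of $\phi_j$ on the time interval $|t|<\inj(M)$ recalled in subsection~\ref{intro:disc}, the leading symbol matches the Euclidean one, and stationary phase in $\xi$ yields $(A\phi_j)(x)=\chi_{\la_j<\la}\phi_j(x)+O(\la_j^{-1}|\phi_j(x)|)$. Summing and using Weyl's law for $\sum|\phi_j(x)|^2$ then gives $I_2=W(\la)+O(\la^{n-1})$.

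Combining the three pieces yields $I_1-2\re I_2+I_3=O(\la^{n-1})$ uniformly in $x\in M$, as required. The main obstacle is the cross term $I_2$: this is where the Hadamard parametrix and FIO machinery play an essential role, quantifying the intuition that the Euclidean kernel $\mathcal{N}(\la,d_{x,y})$ is a faithful local model for the spectral projector of $\sqrt{\Delta}$ on $M$. The contributions from outside the injectivity ball must also be kept uniform in $x$, using the universal estimate $N_{x,y}(\la)=O(\la^{n-1})$ from \eqref{intro:bound2} together with the decay of $\mathcal{N}(\la,r)$.
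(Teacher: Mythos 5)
Your decomposition is genuinely different from the paper's. You expand the square and aim for cancellation of Weyl main terms; the paper instead writes $N_{x,y}=N_{x,y;0}+N_{x,y;1}$ with $N_{x,y;0}=\rho*N_{x,y}$ a smoothing by convolution, bounds the rough remainder $N_{x,y;1}$ via the exact orthogonality identity \eqref{n1:proof1}, which turns $\int_M|N_{x,y;1}(\la)|^2\,\dr y$ into $\int|\rho_1(\la-\mu)|^2\,\dr N_{x,x}(\mu)$ (a pure one-dimensional Weyl-law estimate), and applies the Hadamard parametrix only to the smooth piece $N_{x,y;0}$. Your treatment of $I_1$ is exact and correct, and your sketch for $I_3$ (Euclidean projection identity plus $O(|u|^2)$ volume distortion plus the Bessel tail bound from \eqref{asympchar}) is sound.

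The gap is in $I_2$, and it is substantive. The claimed estimate $(A\phi_j)(x)=\chi_{\la_j<\la}\phi_j(x)+O(\la_j^{-1}|\phi_j(x)|)$, with constant uniform in $j$ and in $\la$, cannot hold: when $\la_j$ lies within $O(1)$ of the cutoff $\la$, the sharp characteristic functions in $A$ and in $\chi_{\la_j<\la}$ produce an error of size $|\phi_j(x)|$, not $\la_j^{-1}|\phi_j(x)|$ --- a Gibbs-type effect at the boundary of the spectral window. This is precisely where your proposed stationary-phase-plus-parametrix route degenerates: for $\la_j\approx\la$, the critical point in $\xi$ sits at the boundary $|\xi|=\la$ of the sharp integration region, and the standard stationary phase bound no longer improves with the frequency. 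You would at minimum have to split off the transition band $|\la_j-\la|\lesssim 1$ and treat it separately, and then still show that the error on the good range is uniform in $\la$, none of which follows from "invoking the parametrix". The paper's decomposition is engineered precisely to sidestep this: it never applies any FIO machinery to individual eigenfunctions, and the sharp cutoff at $\la$ enters only through $\rho_1$, whose contribution to the $L^2_y$-norm is controlled by exact orthogonality of $\{\phi_j\}$ rather than by an oscillatory-integral estimate. If you want to pursue your route, the cross term $I_2$ should be confronted through something like the paper's smoothing trick, not a pointwise bound on $A\phi_j$.
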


\begin{remark} Note that $\tilde N_{x,x}(\la)\gg
\la^{\frac{n+1}{2}}$. We subtract the term $\tilde
{\mathcal{N}}(\la,d_{x,y})$ in \eqref{upper:int0} in order to
``regularize'' the rescaled spectral function in a neighbourhood of
the diagonal. Geometrically this can be interpreted as follows.
Consider a normal coordinate system centred at $x$. In these
coordinates the Riemannian metric is Euclidean at the point $x$, and
hence near $x$ the geometric Laplacian $\Delta$ can be viewed as a
{\it perturbation} of the Euclidean Laplacian (a similar idea was
used in \cite[Section 3]{Pol} and goes back to \cite[Theorem
6.1]{AK}). We justify the regularization using the Hadamard
parametrix for the wave kernel, see the
subsection~\ref{proof-upper:n0}.
\end{remark}
\subsection{Spectral average}
Theorem~\ref{upper:theorem} implies a number of
results. The first one describes the  growth of the spectral
function on average with respect to $\la$.

\begin{theorem}\label{upper:cor1}
For every finite measure $\,\nu\,$ on $\R_+$ and each fixed $\,x\in
M\,$, there exists a subset $\,M_{x,\nu}\subset M\,$ of full measure
such that
\begin{equation}\label{upper:int2}
\int_0^\infty|\tilde N_{x,y}(\la)|^2\,\dr\nu(\la)\ <\
\infty\,,\qquad\forall y\in M_{x,\nu}\,.
\end{equation}
\end{theorem}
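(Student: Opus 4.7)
The plan is to deduce this as a nearly immediate Fubini consequence of Theorem \ref{upper:theorem} applied to the finite product measure $dy\otimes d\nu(\la)$, with the auxiliary term $\tilde{\mathcal{N}}(\la,d_{x,y})$ handled separately by the Euclidean asymptotics \eqref{asympchar}.

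First I would integrate the bound \eqref{upper:int0} against $d\nu(\la)$. Since $\nu$ is finite on $\R_+$, this gives
\begin{equation*}
\int_0^\infty\!\!\int_M\bigl|\tilde N_{x,y}(\la)-\tilde{\mathcal{N}}(\la,d_{x,y})\bigr|^2\,\dr y\,\dr\nu(\la)\ \leq\ C_M\,\nu(\R_+)\ <\ \infty\,.
\end{equation*}
Since the integrand is nonnegative and measurable, Tonelli lets me swap the order of integration, so the inner integral in $\la$ is finite for $y$ in some set $M_{x,\nu}^1\subset M$ of full measure.

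Next I would control $\tilde{\mathcal{N}}(\la,d_{x,y})$ separately. For any fixed $y\neq x$, the asymptotic \eqref{asympchar} yields $\mathcal{N}(\la,d_{x,y})=O(\la^{(n-1)/2})$ as $\la\to+\infty$, with implicit constant depending only on $d_{x,y}$; for $\la$ in a bounded neighbourhood of $0$ the standard small-argument expansion $J_{n/2}(z)=O(z^{n/2})$ combined with \eqref{specrn} shows that $\tilde{\mathcal{N}}(\la,d_{x,y})=O(\la^{(n+1)/2})$ is bounded there as well. Consequently $\tilde{\mathcal{N}}(\cdot,d_{x,y})$ is bounded on $\R_+$ for each $y\neq x$, and because $\nu$ is finite,
\begin{equation*}
\int_0^\infty\bigl|\tilde{\mathcal{N}}(\la,d_{x,y})\bigr|^2\,\dr\nu(\la)\ <\ \infty\qquad\text{for every }y\in M\setminus\{x\}\,.
\end{equation*}

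Finally I would set $M_{x,\nu}:=M_{x,\nu}^1\setminus\{x\}$, which still has full measure, and apply the elementary inequality $|a|^2\leq 2|a-b|^2+2|b|^2$ with $a=\tilde N_{x,y}(\la)$ and $b=\tilde{\mathcal{N}}(\la,d_{x,y})$ to conclude \eqref{upper:int2}. There is no real obstacle here; the only care needed is to verify that $\tilde{\mathcal{N}}(\la,d_{x,y})$ belongs to $L^2(\dr\nu)$ for $\nu$-almost every value of the spectral parameter, which is immediate once the Bessel-function asymptotics at both zero and infinity are invoked. The substantive content has already been absorbed into Theorem \ref{upper:theorem}.
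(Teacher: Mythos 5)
Your proof is correct. The paper reaches the same conclusion via a different decomposition: it invokes Theorem~\ref{upper:cor0} with weight exponent $\vk>1$, so that the bound $\int_M d_{x,y}^{\,\vk}\,|\tilde N_{x,y}(\la)|^2\,\dr y\leq C_{\vk,M}(1+\la^{1-\vk})$ is uniform in $\la$, then integrates against $\dr\nu$ and applies Fubini, using that $d_{x,y}^{\,\vk}>0$ for a.e.\ $y$ to strip the weight. You instead apply Theorem~\ref{upper:theorem} directly, integrate against $\dr\nu$, apply Tonelli, and then separately verify that the subtracted Euclidean profile $\tilde{\mathcal N}(\cdot,d_{x,y})$ is a bounded function of $\la$ for each fixed $y\neq x$ (via the Bessel asymptotics at $0$ and at $\infty$), finishing with a triangle inequality. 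Both approaches must tame the diagonal singularity of $\tilde N_{x,y}$: the paper does it by the weight $d_{x,y}^{\,\vk}$, you by subtracting $\tilde{\mathcal N}$; your version bypasses Theorem~\ref{upper:cor0} entirely and is in that sense more self-contained, at the small cost of checking the pointwise boundedness of $\tilde{\mathcal N}$. One minor slip: your closing remark about verifying that $\tilde{\mathcal N}(\la,d_{x,y})\in L^2(\dr\nu)$ ``for $\nu$-almost every value of the spectral parameter'' is garbled --- what you actually (correctly) establish, and what is needed, is membership in $L^2(\dr\nu)$ for every fixed $y\neq x$.
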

It should be emphasized that, generally speaking, the set
$\,M_{x,\nu}\,$ depends on the choice of $\,\nu\,$. Otherwise
\eqref{upper:int2} would mean that $\,\tilde N_{x,y}(\la)\,$ is
bounded for almost all $\,y\in M\,$, which is not always the case
(see the next subsection).

In particular, taking
$\,\dr\nu=(\la+1)^{-1}\,(\ln\la)^{-1-\eps}\,\dr\la\,$ and applying
Theorem~\ref{upper:cor1}, we see that
\begin{equation}\label{upper:log}
\int_0^\infty\la^{-1}\,(\ln\la)^{-1-\eps}\,|\tilde
N_{x,y}(\la)|^2\,\dr\la\ <\ \infty
\end{equation}
for all $\eps>0$, all $x\in M$ and almost all $y\in M$.

\begin{remark}\label{upper:randol}
This paper was inspired by \cite{Randol}, where the estimate
\eqref{upper:int2} was proved for surfaces of constant negative
curvature under the assumption that
$\dr\nu(\la)=(\la+1)^{-1-\eps}\,\dr\la$ with some $\eps>0$. Randol's
proof is based on the estimate \eqref{zeta:l2} for the pointwise
$\zeta$-function of the Laplacian and uses some results of complex
analysis obtained in \cite{HIP}. We use a more direct approach that
is applicable in higher generality and gives better estimates. It
also allows us to improve the bounds for the $\zeta$-function. Even
though these estimates are not needed in our proof, they seem to be
of independent interest. We have included them in
Section~\ref{main:zeta}, where we also explain the relation between
Theorem~\ref{upper:cor1} and properties of the $\zeta$-function.
\end{remark}
\subsection{Growth of the rescaled spectral function}
The\-o\-rem~\ref{upper:cor1} and Example~\ref{intro:spheres} suggest
that $\,\tilde N_{x,y}(\la)=O(1)\,$ for almost all $x,y\in M$ on any
manifold $M$. However, this is not true. In particular, for any
negatively curved manifold $M$ there exists a constant $\alpha_M >
0$ depending on certain dynamical properties of the geodesic flow,
such that $\tilde N_{x,y}(\la)\neq o\left((\ln
\la)^{\alpha_M}\right)$ for all $x,y\in M$ (see \cite{JP}). At the
same time, for fixed $x$ and $y$ on a negatively curved manifold,
the sequence of $\la$-s yielding the logarithmic growth of the
rescaled spectral function is very scarce. This sequence is quite
sensitive to the choice of the points $x$ and $y$. In particular,
for fixed $x$ and $\la$, the set of points $y$, for which the
function $\tilde N_{x,y}(\la)$ has a logarithmic ``peak'', is
expected to be very small.

The theorem below shows that a similar effect takes place on any
manifold, not necessarily negatively curved.

\begin{theorem}\label{upper:cor3}
There exists a constant $\,C_0\,$ not depending on $M$, and  a
constant $C_M$ depending on the geometry of $M$, such that for any
$C> C_0$, the measure of the set
\begin{equation}\label{upper:omega}
\Omega_x(\la,\mu)\ :=\ \{\,y\in M\,:\,|\tilde N_{x,y}(\la)|\geq
C\,(\mu+d_{x,y}^{\,-\frac{n+1}2})\,\}
\end{equation}
satisfies
$${\rm meas}(\Omega_x(\la,\mu))\leq\frac{C_M}{C^2 \mu^2}$$
for any point $x\in M$ and any $\la \in \R_+$.
\end{theorem}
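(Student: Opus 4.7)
The plan is to apply Chebyshev's inequality to the mean-square estimate of Theorem~\ref{upper:theorem}, after first absorbing the singular ``Euclidean'' term $\tilde{\mathcal{N}}(\la,d_{x,y})$ into the $d_{x,y}^{-(n+1)/2}$ factor on the right-hand side of \eqref{upper:omega}.

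The first step is to extract from \eqref{specrn} an absolute, geometry-independent pointwise bound of the form
\begin{equation*}
|\tilde{\mathcal{N}}(\la,d)|\ \leq\ C_0\,d^{-(n+1)/2},\qquad \forall\,\la,d>0.
\end{equation*}
Indeed, rewriting \eqref{specrn} as
$\tilde{\mathcal{N}}(\la,d)=(2\pi)^{-n/2}\,d^{-(n+1)/2}\,(\la d)^{1/2}\,J_{n/2}(\la d)$
reduces the claim to the uniform bound for $z^{1/2}|J_{n/2}(z)|$ on $\R_+$, which follows from the standard Bessel asymptotics $J_{n/2}(z)=O(z^{n/2})$ as $z\to 0^+$ and $J_{n/2}(z)=O(z^{-1/2})$ as $z\to\infty$. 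This defines the constant $C_0$ mentioned in the theorem.

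The second step is a triangle-inequality argument on the exceptional set. Fix any $C>C_0$ and suppose $y\in\Omega_x(\la,\mu)$. Then
\begin{equation*}
|\tilde N_{x,y}(\la)-\tilde{\mathcal{N}}(\la,d_{x,y})|\ \geq\ |\tilde N_{x,y}(\la)|-|\tilde{\mathcal{N}}(\la,d_{x,y})|\ \geq\ C\mu+(C-C_0)\,d_{x,y}^{-(n+1)/2}\ \geq\ C\mu,
\end{equation*}
so the integrand in \eqref{upper:int0} is at least $C^2\mu^2$ throughout $\Omega_x(\la,\mu)$.

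The third and final step is Chebyshev: integrating over $\Omega_x(\la,\mu)$ and using Theorem~\ref{upper:theorem} gives
\begin{equation*}
C^2\mu^2\cdot {\rm meas}(\Omega_x(\la,\mu))\ \leq\ \int_{\Omega_x(\la,\mu)}|\tilde N_{x,y}(\la)-\tilde{\mathcal{N}}(\la,d_{x,y})|^2\,\dr y\ \leq\ C_M,
\end{equation*}
which is the claimed estimate. There is no genuine obstacle here, since the content of the theorem has already been absorbed into Theorem~\ref{upper:theorem}; the only delicate point is ensuring that the Euclidean approximant $\tilde{\mathcal{N}}$ can be bounded uniformly in $\la$ by $C_0\,d_{x,y}^{-(n+1)/2}$, which is precisely what motivates the specific form of the threshold in the definition of $\Omega_x(\la,\mu)$.
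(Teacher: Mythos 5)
Your proof is correct and follows essentially the same route as the paper's: apply Chebyshev to the $L^2$ bound of Theorem~\ref{upper:theorem}, after controlling the Euclidean approximant $\tilde{\mathcal N}(\la,d_{x,y})$ by the uniform Bessel estimate \eqref{bessel}. The paper reaches the same conclusion via the inequality $2|a-b|^2\geq|a|^2-2|b|^2$ (yielding $C_0=\sqrt{2}\,C_1$), whereas you use the reverse triangle inequality $|a-b|\geq|a|-|b|$, which gives a marginally sharper threshold $C_0$; otherwise the two arguments are identical.
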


\begin{corollary}\label{upper:cor4}
Let $\{\mu_k\}$ and $\{\tau_k\}$ be positive increasing sequences
converging to $+\infty$. Then, for each fixed $x\in M$, the measure
of the set of points $y\in M$ such that
\begin{equation}\label{upper:seq1}
|\tilde N_{x,y}(\tau_k)|\ \ge\ \mu_k\,,\qquad\forall k=1,2,\ldots,
\end{equation}
is equal to zero. Moreover, if $\,\sum_k\mu_k^{-2}<\infty\,$ then
\begin{equation}\label{upper:seq2}
\limsup_{k\to\infty}|\mu_k^{-1}\tilde N_{x,y}(\tau_k)|\ =\ 0
\end{equation}
for almost all $y\in M$.
\end{corollary}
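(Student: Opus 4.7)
The plan is to derive both statements from Theorem~\ref{upper:cor3} via a standard Borel--Cantelli argument. The only subtlety is the singular term $d_{x,y}^{-(n+1)/2}$ appearing in the definition of $\Omega_x(\la,\mu)$, which I handle by first restricting to the region where $y$ is a fixed positive distance from $x$ and then exhausting $M\setminus\{x\}$ by countable union.

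Fix $C>C_0$ once and for all, and for each $\delta>0$ set $E_\delta:=\{y\in M:d_{x,y}>\delta\}$, so that $d_{x,y}^{-(n+1)/2}\leq\delta^{-(n+1)/2}$ on $E_\delta$ and $M\setminus\bigcup_{\delta>0}E_\delta=\{x\}$ is a null set. For any $\eta>0$ and any index $k$ with $\eta\mu_k\geq 2C\delta^{-(n+1)/2}$, the elementary inequality
\begin{equation*}
\eta\mu_k\ =\ \tfrac{\eta\mu_k}{2}+\tfrac{\eta\mu_k}{2}\ \geq\ C\,d_{x,y}^{-(n+1)/2}+\tfrac{\eta\mu_k}{2}\qquad(y\in E_\delta)
\end{equation*}
yields the inclusion $\{y\in E_\delta:|\tilde N_{x,y}(\tau_k)|\geq\eta\mu_k\}\subseteq\Omega_x(\tau_k,\eta\mu_k/(2C))$. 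Theorem~\ref{upper:cor3} then gives
\begin{equation*}
{\rm meas}\bigl\{y\in E_\delta:|\tilde N_{x,y}(\tau_k)|\geq\eta\mu_k\bigr\}\ \leq\ \frac{4C_M}{\eta^2\mu_k^2}.
\end{equation*}

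For \eqref{upper:seq1}, I take $\eta=1$. Since $\mu_k\to+\infty$, the right-hand side tends to $0$, and hence the intersection over $k$ of these sets has measure zero in $E_\delta$; letting $\delta=1/n\to 0$ exhausts $M\setminus\{x\}$ and proves the first claim. For \eqref{upper:seq2}, the hypothesis $\sum_k\mu_k^{-2}<\infty$ together with the above measure bound makes the classical Borel--Cantelli lemma applicable: for fixed $\eta,\delta>0$, almost every $y\in E_\delta$ satisfies $|\tilde N_{x,y}(\tau_k)|<\eta\mu_k$ for all but finitely many $k$, so $\limsup_k|\mu_k^{-1}\tilde N_{x,y}(\tau_k)|\leq\eta$. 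Intersecting the corresponding full-measure sets over $\eta=1/m$ and $\delta=1/n$ gives $\limsup_k|\mu_k^{-1}\tilde N_{x,y}(\tau_k)|=0$ for almost all $y\in M$.

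I do not anticipate any serious obstacle: the entire argument is essentially Borel--Cantelli, once one observes that the unbounded factor $d_{x,y}^{-(n+1)/2}$ entering Theorem~\ref{upper:cor3} is harmlessly absorbed by $\tfrac{\eta\mu_k}{2}$ as soon as $y$ is kept at a fixed positive distance from $x$ and $k$ is large enough.
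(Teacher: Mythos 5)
Your proof is correct and follows essentially the same route as the paper's: both derive the two claims from Theorem~\ref{upper:cor3} via measure bounds on $\Omega_x(\tau_k,\cdot)$ and the Borel--Cantelli lemma. The only cosmetic difference is that you tame the singular factor $d_{x,y}^{\,-(n+1)/2}$ by first restricting to the annuli $E_\delta$ and then exhausting $M\setminus\{x\}$, whereas the paper absorbs that term pointwise for each fixed $y\ne x$ by letting $k\to\infty$; the two bookkeeping devices are interchangeable here.
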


\subsection{Weighted average over the manifold}
Let us consider  the average of the rescaled spectral function over
$M$ with a weight given by a power of the distance function
$d_{x,y}$. 
\begin{theorem}\label{upper:cor0}
For all $\vk\geq0$, $\,x\in M\,$ and $\la \in \R_+$, we have
\begin{equation}\label{upper:int1}
\int_M d_{x,y}^{\,\vk}\,|\tilde N_{x,y}(\la)|^2\,\dr y\ \leq\
\begin{cases}
C_{\vk,\,M}\, (1+\la^{1-\vk}), \,&\vk\ne1\,,\\
C_M\,(1+ |\ln\la|)\,,&\vk=1.
\end{cases}
\end{equation}
\end{theorem}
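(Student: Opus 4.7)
The plan is to reduce the weighted estimate to a purely ``Euclidean'' computation by using Theorem~\ref{upper:theorem} as a black box. Writing
$$
\tilde N_{x,y}(\la)\;=\;\tilde {\mathcal{N}}(\la,d_{x,y}) \;+\; R_{x,y}(\la),
$$
the inequality $(a+b)^2\le 2a^2+2b^2$ yields
$$
\int_M d_{x,y}^{\,\vk}\,|\tilde N_{x,y}(\la)|^2\,\dr y
\;\le\; 2\!\int_M d_{x,y}^{\,\vk}\,|\tilde {\mathcal{N}}(\la,d_{x,y})|^2\,\dr y
\;+\; 2\!\int_M d_{x,y}^{\,\vk}\,|R_{x,y}(\la)|^2\,\dr y.
$$
Since $M$ is compact, $d_{x,y}^{\,\vk}\le (\diam M)^{\vk}$, so the second integral is at most $(\diam M)^{\vk} C_M$ by Theorem~\ref{upper:theorem}. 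Thus the whole problem is reduced to bounding the first integral.

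For the first integral, I would pass to geodesic polar coordinates $y=\exp_x(r\omega)$ centred at $x$. The cut locus has measure zero, and inside the injectivity domain the Riemannian volume element has the form $J(r,\omega)\,\dr r\,\dr\omega$ with $J(r,\omega)\le C_M r^{n-1}$ uniformly on the compact manifold. Substituting the explicit formula \eqref{specrn} for $\mathcal{N}$ and the definition of $\tilde{\mathcal{N}}$, and rescaling $u=r\la$, one arrives at the one-dimensional integral
$$
\int_M d_{x,y}^{\,\vk}\,|\tilde {\mathcal{N}}(\la,d_{x,y})|^2\,\dr y
\;\le\; C_M\,\la^{1-\vk}\!\int_0^{D\la} u^{\vk-1}\,|J_{n/2}(u)|^2\,\dr u,
\qquad D:=\diam M.
$$

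The heart of the matter is now to estimate this one-dimensional integral using the two regimes of the Bessel function: $|J_{n/2}(u)|^2\le C u^{n}$ near $0$ (from its power series) and $|J_{n/2}(u)|^2\le C u^{-1}$ for $u\ge 1$ (from formula 8.451(1) of \cite{GR}). Splitting at $u=1$:
$$
\int_0^{1} u^{\vk-1}\cdot u^n\,\dr u \;=\;\frac{1}{n+\vk}\;=\;O_\vk(1),
\qquad
\int_1^{D\la} u^{\vk-2}\,\dr u \;=\;
\begin{cases}
O_{\vk}(1),&\vk<1,\\
\ln(D\la),&\vk=1,\\
O_{\vk}(\la^{\vk-1}),&\vk>1.
\end{cases}
$$
Multiplying back by $\la^{1-\vk}$ gives $O_{\vk}(\la^{1-\vk})$ when $\vk<1$, $O(\ln\la)$ when $\vk=1$, and $O_{\vk}(1)$ when $\vk>1$. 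Combined with the $R$-term contribution this yields the claimed bounds; the ``$1+$'' allowance in the theorem trivially absorbs the case $D\la\le 1$, where only the near-zero regime of the Bessel function enters.

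There is no substantive obstacle: all the analytic content has been packaged into Theorem~\ref{upper:theorem}, and what remains is bookkeeping with Bessel asymptotics together with the standard Jacobian estimate for the exponential map on a compact Riemannian manifold. The only point requiring a modicum of care is ensuring that the constant in the Jacobian bound and the constants $C_{\vk,M}$ blow up appropriately as $\vk\to 1$, accounting for the appearance of the logarithm precisely at the borderline exponent.
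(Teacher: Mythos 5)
Your proof is correct and follows essentially the same route as the paper's: both decompose $\tilde N_{x,y}$ via Theorem~\ref{upper:theorem}, absorb the remainder using compactness of $M$ and $(\diam M)^{\vk}$, and then reduce the main term to the one-dimensional integral $\la^{1-\vk}\int_0^{D\la} u^{\vk-1}J_{n/2}^2(u)\,\dr u$ via the exponential-map Jacobian, finishing with the two-regime Bessel bound split at $u=1$. The only cosmetic difference is that you use $(a+b)^2\le 2a^2+2b^2$ where the paper phrases it as the triangle inequality in $L^2(M)$.
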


Note that $\,\int_M|\tilde N_{x,y}(\la)|^2\,\dr
y=\la^{1-n}\,N_{x,x}(\la)=C \la +O(1)\,$. Therefore the estimate
\eqref{upper:int1} with $\vk=0$ is order sharp. We also remark that,
as follows from the proof of Theorem \ref{upper:cor0},  the constant
$C_{\vk,\, M}$ blows up as $\frac{1}{|1-\vk|}$ when $\vk \to 1$.

\begin{corollary}\label{upper:cor2}
Let $\,u(x,y)\,$ be a function on $\,M\times M\,$ such that
$\,|u(x,y)|\leq C\,d_{x,y}^{\,\vk}\,$ with some nonnegative
constants $\vk$ and $C$, and let $\,K_\la\,$ be the operator defined
by the integral kernel $\,{\mathcal K_\la}(x,y):=u(x,y)\,|\tilde
N_{x,y}(\la)|^2\,$. Then $\,K_\la\,$ maps $L^p(M)$ into $L^p(M)$ and
$$
\|K_{\la}\|_{L^p\to L^p}\ \leq\
\begin{cases}
C_{\vk,\,M}\, (1+\la^{1-\vk}), \,&\vk\ne1\,,\\
C_M\,(1+ |\ln\la|)\,,&\vk=1.
\end{cases}
\qquad\forall p\in[1,\infty].
$$
\end{corollary}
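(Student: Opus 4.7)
The plan is to apply the Schur test to the integral kernel $\mathcal{K}_\la(x,y)$ of $K_\la$. The essential preliminary remark is that the majorant $|\mathcal{K}_\la(x,y)|\leq C\,d_{x,y}^{\,\vk}\,|\tilde N_{x,y}(\la)|^2$ is symmetric in $(x,y)$: the factor $d_{x,y}^{\,\vk}$ trivially is, and reading off the definition \eqref{specfundef} one has $N_{y,x}(\la)=\overline{N_{x,y}(\la)}$, hence $|\tilde N_{y,x}(\la)|=|\tilde N_{x,y}(\la)|$.

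With this symmetry in hand, Theorem~\ref{upper:cor0} supplies both halves of the Schur hypothesis simultaneously. Denoting the right-hand side of \eqref{upper:int1} by $\Phi_\vk(\la)$, one gets immediately
$$\sup_{x\in M}\int_M |\mathcal{K}_\la(x,y)|\,\dr y\ \leq\ C\,\Phi_\vk(\la),$$
and swapping the roles of $x$ and $y$ in Theorem~\ref{upper:cor0} (legitimate because the integrand is symmetric) gives the same bound for $\sup_{y\in M}\int_M |\mathcal{K}_\la(x,y)|\,\dr x$.

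The first of these bounds is exactly the $L^\infty\!\to\!L^\infty$ operator norm of $K_\la$, and the second, via Fubini, is the $L^1\!\to\!L^1$ norm; Riesz--Thorin interpolation (equivalently, the classical Schur test) then yields $\|K_\la\|_{L^p\to L^p}\leq C\,\Phi_\vk(\la)$ for every $p\in[1,\infty]$, which is the assertion. There is no real obstacle in this argument: once one observes that the kernel admits a symmetric majorant, the corollary reduces to a formal application of Theorem~\ref{upper:cor0} and a standard interpolation step. The only mild subtlety is to verify that the spectral function, as defined by \eqref{specfundef} with a possibly complex orthonormal basis, is Hermitian symmetric; this is what legitimises transferring the one-sided bound \eqref{upper:int1} to a bound on $\sup_y\int|\mathcal{K}_\la(x,y)|\,\dr x$ without redoing the proof of Theorem~\ref{upper:cor0}.
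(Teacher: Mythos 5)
Your proposal is correct and follows essentially the same route as the paper: use Theorem~\ref{upper:cor0} to control the row sums $\sup_x\int_M|\mathcal{K}_\la(x,y)|\,\dr y$ (giving the $L^\infty$ bound), appeal to the symmetry of the majorant $d_{x,y}^{\,\vk}|\tilde N_{x,y}(\la)|^2$ together with Fubini for the column sums (giving the $L^1$ bound), and then interpolate by Riesz--Thorin. The paper leaves the Hermitian symmetry $N_{y,x}=\overline{N_{x,y}}$ implicit, and you are right that spelling it out is what justifies transferring \eqref{upper:int1} to an integral in $x$; beyond that detail the arguments coincide.
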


In particular, Corollary~\ref{upper:cor2} implies that the
commutator of the operator given by the integral kernel $\,|\tilde
N_{x,y}(\la)|^2\,$ with the multiplication by a smooth function
$\,f\,$ is bounded in $\,L^p(M)\,$ and its norm is 
$O\,(|\ln\la|)\,$. Indeed, the integral kernel of this commutator
coincides with $\left(f(x)-f(y)\right)\,|\tilde N_{x,y}(\la)|^2\,$,
and $\,|f(x)-f(y)|\leq C\,d_{x,y}\,$ with some $C>0$.

\subsection{Lower bounds}\label{main:lower}
The following theorem shows that our upper estimates cannot be
significantly improved.

\begin{theorem}\label{lower:theorem}
If Condition {\rm\ref{intro:non-conj}} is fulfilled then
\begin{equation}\label{lower1}
\la^{-q-1}\int_0^\la\mu^q\,|\tilde N_{x,y}(\mu)|^p\,d\mu\ \gg\
1\,,\qquad\forall q\geq0\,,\quad p\geq1\,.
\end{equation}
\end{theorem}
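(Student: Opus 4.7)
The plan has two parts: first I reduce the family of $(p,q)$-estimates to the single dyadic $L^1$-bound
\[
\int_L^{2L}|\tilde N_{x,y}(\mu)|\,\dr\mu\ \gg\ L\qquad(L\to\infty),
\]
and then prove this bound by testing $\tilde N_{x,y}$ against the oscillatory function $e^{-i\la d_{x,y}}\psi(\la/L)$, evaluating the result via the singularity of the wave kernel $\sigma_{x,y}(t)$ at $t=\pm d_{x,y}$.

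For the reduction, note that $\dr\varrho(\mu):=(q+1)\mu^q\la^{-q-1}\dr\mu$ defines a probability measure on $[0,\la]$. Jensen's inequality applied to $t\mapsto t^p$ gives
\[
\la^{-q-1}\int_0^\la\mu^q|\tilde N_{x,y}(\mu)|^p\,\dr\mu\ \geq\ (q+1)^{p-1}\Big(\la^{-q-1}\int_0^\la\mu^q|\tilde N_{x,y}(\mu)|\,\dr\mu\Big)^p,
\]
so it suffices to treat the case $p=1$. A routine dyadic summation, using $\mu^q\geq(\la/2)^q$ on $[\la/2,\la]$, then converts the dyadic bound above into $\la^{-q-1}\int_0^\la\mu^q|\tilde N_{x,y}|\,\dr\mu\gg 1$ for every $q\geq 0$, completing the reduction.

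Now pick $\psi\in C_c^\infty(\R)$ with $\supp\psi\subset(1,2)$, $\psi\geq 0$, $\psi\not\equiv 0$, and consider
\[
\tilde J(L):=\int_\R e^{-i\la d_{x,y}}\psi(\la/L)\,\tilde N_{x,y}(\la)\,\dr\la.
\]
Riemann-Stieltjes integration by parts (all boundary terms vanish because $\psi(\cdot/L)$ is compactly supported away from $0$) expresses $\tilde J(L)=\sum_j H_L(\la_j)\phi_j(x)\overline{\phi_j(y)}$, where $H_L(\la):=\int_\la^\infty e^{-i\mu d_{x,y}}\psi(\mu/L)\mu^{(1-n)/2}\,\dr\mu$. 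Iterated integration by parts on the oscillatory $\mu$-integral defining $H_L$ yields, for $\la_j\in\supp\psi(\cdot/L)$,
\[
H_L(\la_j)\ =\ \frac{e^{-i\la_j d_{x,y}}}{id_{x,y}}\,\psi(\la_j/L)\,\la_j^{(1-n)/2}\ +\ R_L(\la_j),
\]
where the remainder satisfies $\sum_j|R_L(\la_j)|\,|\phi_j(x)\phi_j(y)|=o(L)$ thanks to the local Weyl bound $\sum_{\la_j\in[L,2L]}|\phi_j(x)\phi_j(y)|=O(L^n)$ (Cauchy-Schwarz + Weyl's law) and a sufficient number of integration-by-parts iterations. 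Replacing $\la_j^{(1-n)/2}$ with $L^{(1-n)/2}$ (the difference contributes $o(L)$ after summation) gives
\[
\tilde J(L)\ =\ \frac{L^{(1-n)/2}}{id_{x,y}}\,I(L)\ +\ o(L),\qquad I(L):=\sum_j\psi(\la_j/L)\,e^{-i\la_j d_{x,y}}\,\phi_j(x)\overline{\phi_j(y)}.
\]
By Fourier duality, $I(L)$ coincides (up to a fixed constant) with the pairing of $\sigma_{x,y}(t)$ against a smooth bump of width $L^{-1}$ localised near $t=\pm d_{x,y}$. Under Condition~\ref{intro:non-conj}, $\sigma_{x,y}$ is a non-degenerate Lagrangian distribution at $t=\pm d_{x,y}$, and the standard stationary-phase/FIO analysis (compare the Euclidean formula \eqref{asympchar} and the proof of \eqref{intro:bound3} in \cite{JP}; see also \cite{DG,SV}) yields
\[
I(L)\ =\ c_0\,L^{(n+1)/2}+o(L^{(n+1)/2}),\qquad c_0=c_0(x,y)\ne 0.
\]
Combining the last two displays gives $|\tilde J(L)|=|c_0|/d_{x,y}\cdot L+o(L)\gg L$, and the elementary estimate $|\tilde J(L)|\leq\|\psi\|_\infty\int_L^{2L}|\tilde N_{x,y}(\la)|\,\dr\la$ then produces the desired dyadic bound. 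The technical core of the argument is the non-vanishing constant $c_0$: this rests precisely on Condition~\ref{intro:non-conj}, which (via non-degeneracy of the exponential map along the minimising geodesic) guarantees that the principal symbol of $\sigma_{x,y}$ at $t=\pm d_{x,y}$ does not vanish; all other steps — the Jensen reduction, the dyadic summation, the iterated integration by parts, and the final triangle inequality — are routine harmonic-analytic manipulations.
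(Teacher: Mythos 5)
Your overall strategy is close to the paper's: both proofs rest on the cited singularity asymptotic for $\sigma_{x,y}$ near $t=d_{x,y}$ (the paper invokes \cite[Theorem 4.2]{LSV}, resp.\ \cite[Proposition 3.3.6]{JP}, which is what your ``stationary-phase/FIO analysis'' step amounts to), and both reduce $p>1$ to $p=1$ via Jensen. What you do differently is reduce to a dyadic $L^1$ bound and test $\tilde N_{x,y}$ against a rescaled modulated bump $e^{-i\la d_{x,y}}\psi(\la/L)$, whereas the paper multiplies the pointwise convolution asymptotic by $\mu^q$, integrates over $(0,\la/2)$, and uses Fubini to bound $\int_0^\la\tau^q|N_{x,y}|\,d\tau$ directly. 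The Jensen reduction and the dyadic-to-$q$-weighted conversion (using $\mu^q\geq(\la/2)^q$ on $[\la/2,\la]$) are both correct.

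The gap is in your passage from $\tilde J(L)=\sum_j H_L(\la_j)\phi_j(x)\overline{\phi_j(y)}$ to the main term $\tfrac{L^{(1-n)/2}}{id_{x,y}}I(L)$. You perform iterated integration by parts on
\[
H_L(\la_j)=\int_{\la_j}^\infty e^{-i\mu d_{x,y}}\,a(\mu)\,\dr\mu,\qquad a(\mu)=\psi(\mu/L)\,\mu^{\frac{1-n}{2}},
\]
and claim that the resulting remainder satisfies $\sum_j|R_L(\la_j)||\phi_j(x)\phi_j(y)|=o(L)$ by ``local Weyl bound $+$ Cauchy--Schwarz.'' This fails for $n\geq3$. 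After $K$ integrations by parts one picks up boundary terms $\frac{e^{-i\la_j d_{x,y}}}{(id_{x,y})^{k+1}}a^{(k)}(\la_j)$ for $k=1,\dots,K-1$, each of size $O(L^{\frac{1-n}{2}-k})$ on $\la_j\in[L,2L]$; since the only absolute-value bound available is $\sum_{\la_j\in[L,2L]}|\phi_j(x)\phi_j(y)|=O(L^n)$ (Cauchy--Schwarz is saturated here, Weyl giving nothing better), the $k=1$ boundary term alone contributes $O(L^{\frac{n-1}{2}})$, which is $\gg L$ for $n\geq4$ and comparable to $L$ for $n=3$, not $o(L)$ as you need. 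Increasing $K$ does not help: the offending boundary term is produced by the \emph{first} integration by parts and is not diminished by subsequent ones. The same issue sinks the step ``replacing $\la_j^{(1-n)/2}$ with $L^{(1-n)/2}$ (the difference contributes $o(L)$)'': that difference is $O(L^{(1-n)/2})$ pointwise and contributes $O(L^{(n+1)/2})$ under the triangle inequality, which is never $o(L)$.

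These sums do in fact end up being small, but only because of \emph{cancellation}: each one is itself an expression of the form $\int b(\la)e^{-i\la d_{x,y}}\,\dr N_{x,y}(\la)$ with a compactly supported amplitude $b$ of size $O(L^{\frac{1-n}{2}-k})$, and the same FIO asymptotic that you used for $I(L)$ shows it equals $C\int b(\mu)\mu^{\frac{n-1}{2}}\,\dr\mu+\text{l.o.t.}=O(L^{1-k})$, which is $o(L)$ for $k\geq1$. So the argument can be repaired, but only by applying the singularity asymptotic to the remainder sums as well, not by absolute values. Alternatively, and more simply, one can skip the $H_L$ decomposition entirely: from the cited bound $|\int\psi(\mu-\tau)\,\dr N_{x,y}(\tau)|\gg\mu^{(n-1)/2}$ (with the paper's modulated $\psi$), integrate by parts once in the Stieltjes sense, take absolute values, and integrate over $\mu\in[L,2L]$; Fubini and the rapid decay of $\psi'$ immediately give $\int_{L-K}^{2L+K}|N_{x,y}(\tau)|\,\dr\tau\gg L^{(n+1)/2}$ and hence your dyadic bound. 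That is, in effect, the paper's weighted Fubini argument restricted to a dyadic window, and it sidesteps the oscillatory IBP on $H_L$ altogether.
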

In particular, $\,\la^{-1}\int_0^\la|\tilde
N_{x,y}(\mu)|\,d\mu\gg1\,$.

\begin{corollary}\label{lower:cor4}
Let Condition {\rm\ref{intro:non-conj}} be fulfilled, and let $f$ be
a positive function on $\R_+\,$. If there exists a constant
$\,q\geq0\,$ such that
\begin{equation}\label{lower2}
\limsup_{\mu\to+\infty}\left(\mu^{q+1}\,\inf_{\tau\leq\mu}\left(\tau^{-q}f(\tau)\right)\right)\
>\ 0
\end{equation}
then $\,\int f(\mu)\,|\tilde N_{x,y}(\mu)|^p\,\dr\mu=\infty\,$ for
all $p\geq1$. In particular, we have
\begin{equation}\label{lower3}
\int_0^\infty\mu^{-1}\,|\tilde N_{x,y}(\mu)|^p\,\dr\mu\ =\
\infty\,,\qquad\forall p\geq1\,,
\end{equation}
and
\begin{equation}\label{lower4}
\sum_k\mu_k^{-1}\int_{\mu_{k-1}}^{\mu_k}|\tilde
N_{x,y}(\mu)|^p\,\dr\mu\ =\ \infty\,,\qquad\forall p\geq1\,,
\end{equation}
for every increasing sequence $\mu_k\to+\infty$.
\end{corollary}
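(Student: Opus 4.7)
The plan is to deduce the general statement directly from Theorem~\ref{lower:theorem} by a Riemann--Stieltjes integration by parts, and then read off the two named special cases. The key device is the non-increasing auxiliary function
$$h(\mu)\ :=\ \inf_{\tau\leq\mu}\bigl(\tau^{-q}f(\tau)\bigr),$$
which by construction satisfies $f(\tau)\geq\tau^q h(\tau)$ pointwise. Setting $\Phi(\la):=\int_0^\la\mu^q|\tilde N_{x,y}(\mu)|^p\,\dr\mu$, we obtain
$$\int_0^\infty f(\mu)\,|\tilde N_{x,y}(\mu)|^p\,\dr\mu\ \geq\ \int_0^\infty h(\mu)\,\dr\Phi(\mu),$$
and Theorem~\ref{lower:theorem} provides constants $C>0$ and $\la_0$ with $\Phi(\la)\geq C\la^{q+1}$ for all $\la\geq\la_0$.

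A preparatory step is to show that the hypothesis $\limsup_\mu\mu^{q+1}h(\mu)>0$ already forces $\int_0^\infty\mu^q h(\mu)\,\dr\mu=\infty$. Extract a sequence $R_k\to\infty$ with $R_k^{q+1}h(R_k)\geq c>0$, thinned so that $R_{k+1}\geq 2R_k$. Monotonicity of $h$ gives $h(\mu)\geq h(R_{k+1})\geq cR_{k+1}^{-q-1}$ for $\mu\in[R_k,R_{k+1}]$, whence
$$\int_{R_k}^{R_{k+1}}\mu^q h(\mu)\,\dr\mu\ \geq\ \frac{c\bigl(1-2^{-q-1}\bigr)}{q+1},$$
and summing over $k$ yields the asserted divergence.

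Now perform Riemann--Stieltjes integration by parts on $[\la_0,R]$:
$$\int_{\la_0}^R h\,\dr\Phi\ =\ h(R)\Phi(R)-h(\la_0)\Phi(\la_0)+\int_{\la_0}^R \Phi(\mu)\,(-\dr h(\mu)).$$
Substituting $\Phi(\mu)\geq C\mu^{q+1}$ in the last integral and integrating by parts once more converts it into $-CR^{q+1}h(R)+C\la_0^{q+1}h(\la_0)+C(q+1)\int_{\la_0}^R\mu^q h(\mu)\,\dr\mu$. The crucial observation is that the $R$-dependent contributions combine as $h(R)\Phi(R)-CR^{q+1}h(R)=h(R)\bigl(\Phi(R)-CR^{q+1}\bigr)\geq 0$, so this apparently unbounded term is harmless. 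Letting $R\to\infty$ therefore leaves
$$\int_{\la_0}^\infty h\,\dr\Phi\ \geq\ C(q+1)\int_{\la_0}^\infty\mu^q h(\mu)\,\dr\mu\ -\ O(1)\ =\ \infty,$$
which proves the general claim.

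The two named consequences drop out with $q=0$: for \eqref{lower3}, take $f(\mu)=\mu^{-1}$, so $h(\mu)=\mu^{-1}$ and $\mu h(\mu)\equiv 1$; for \eqref{lower4}, take the step function $f(\mu)=\mu_k^{-1}$ on $(\mu_{k-1},\mu_k]$, which coincides with $h$ there and satisfies $\mu_k h(\mu_k)=1$. The main technical subtlety is the Stieltjes integration by parts with $h$ merely monotone (possibly carrying a singular part at jumps) together with the need to control boundary contributions at $\la_0$; both are handled routinely by the standard Lebesgue--Stieltjes formula for BV functions, using that $\Phi$ is continuous and so cannot share a discontinuity with $h$.
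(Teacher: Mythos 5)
Your proof is correct, but it takes a genuinely different route from the paper. The paper argues by contradiction via the dominated convergence theorem: it sets $g_k(\mu):=\mu_k^{-q-1}\chi_{(0,\mu_k]}(\mu)\,\mu^q\,(f(\mu))^{-1}$, observes from \eqref{lower2} that the $g_k$ are uniformly bounded and tend to zero pointwise, and concludes that if $\int f\,|\tilde N_{x,y}|^p\,\dr\mu$ were finite then $\mu_k^{-q-1}\int_0^{\mu_k}\mu^q|\tilde N_{x,y}(\mu)|^p\,\dr\mu=\int g_k\,f\,|\tilde N_{x,y}|^p\,\dr\mu\to0$, contradicting \eqref{lower1}. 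You instead introduce the monotone minorant $h(\mu)=\inf_{\tau\leq\mu}(\tau^{-q}f(\tau))$, reduce to $\int h\,\dr\Phi$ with $\Phi(\la)=\int_0^\la\mu^q|\tilde N_{x,y}(\mu)|^p\dr\mu$, prove separately that \eqref{lower2} forces $\int\mu^q h(\mu)\,\dr\mu=\infty$, and then transfer this divergence via two Stieltjes integrations by parts, using $\Phi(\mu)\geq C\mu^{q+1}$ and the sign of $-\dr h$; the key cancellation $h(R)\bigl(\Phi(R)-CR^{q+1}\bigr)\geq0$ disposes of the potentially unbounded boundary term. Both arguments are sound. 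The paper's is shorter and avoids BV/Stieltjes technicalities entirely (it only needs Lebesgue's dominated convergence); yours is more computational but is a direct estimate rather than a proof by contradiction, and it yields the slightly more quantitative statement $\int_{\la_0}^\infty h\,\dr\Phi\geq C(q+1)\int_{\la_0}^\infty\mu^q h(\mu)\,\dr\mu-O(1)$, which makes explicit that the lower-bound constant from Theorem~\ref{lower:theorem} controls the rate of divergence. Your derivations of \eqref{lower3} and \eqref{lower4} from the general claim match the paper's choices of $f$.
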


As follows from \eqref{lower3} with $p=2$, Theorem~\ref{upper:cor1}
fails for $\dr\nu(\la)=(\la+1)^{-1}\dr\la$ on any manifold $M$.

\begin{remark}\label{lower:remark1}
Theorem~\ref{lower:theorem} improves upon \cite[Theorem 1.1.3]{JP}.
It is quite possible that Condition~\ref{intro:non-conj} in this
theorem can be removed: our proof works whenever $\,\si_{x,y}(t)\,$
has a sufficiently strong singularity. It is hard to imagine the
situation where this does not happen; we are not aware of any
counterexamples.
\end{remark}

\subsection{The $\zeta$-function}\label{main:zeta}
The function $\,Z_{x,y}\,$ of complex variable $\,z=t+is\,$ defined
by
\begin{equation}\label{zeta:def}
Z_{x,y}(z)\ :=\ \int_0^\infty \la^{-z}\,\dr N_{x,y}(\la)\ =\
z\int_0^\infty \la^{-z-1}\,N_{x,y}(\la)\,\dr\la
\end{equation}
is said to be the {\em pointwise $\zeta$-function} of the Laplacian.
It is the integral kernel of the pseudodifferential operator
$\Delta^{-\frac{z}2}$. It is well known that $Z_{x,y}(z)$ is an
entire function on $\C$ for all fixed $x\ne y$ (see, for example,
\cite[Theorem 12.1]{Sh}).
\begin{remark}
Further on we call $Z_{x,y}(z)$ simply the $\zeta$-{\it function}.
Note that $Z_{x,y}(z)$ should not be confused with the function
$Z(z)=\Tr \Delta^{-\frac{z}2}$ that is usually referred to as the
$\zeta$-function of the Laplacian.
\end{remark}

The second equality in \eqref{zeta:def} implies that
$\,Z_{x,y}(z)=z\,(\MC\,N_{x,y})(-z)\,$ where $\MC$ is the Mellin
transform, $\,(\MC f)(z):=\int_0^\infty\la^{z-1}\,f(\la)\,\dr\la\,$.
Recall that, for each $t\in\R$, the Mellin transform $(\MC f)(t+is)$
of a distribution $\,f\,$ on $\R_+$ coincides with the inverse
Fourier transform of the distribution $e^{t\mu}f(e^\mu)$ on $\R$
modulo the factor $(2\pi)^{-1}$. The inversion formula reads
$$
f(\la)=(2\pi i)^{-1}\int_{t-i\infty}^{t+i\infty}\la^{-z}(\MC
f)(z)\,\dr z\,,
$$
where the integral is understood in the sense of distributions.

Obviously, $\,e^{t\mu}f(e^\mu)\in L^2(\R)\,$ if and only if
$\,\la^{t-\frac12}f(\la)\in L^2(\R_+)\,$. Therefore
$\,(t+is)^{-1}\,Z_{x,y}(t+is)\in L^2(\R)\,$ if and only if
$\,\la^{-t-\frac12}N_{x,y}(\la)\in L^2(\R_+)\,$ for each fixed $t$,
and
\begin{equation}\label{zeta:inverse}
N_{x,y}(\la)\ =\ (2\pi i)^{-1}\int_{t-i\infty}^{t+i\infty}
\la^{z}\,z^{-1}Z_{x,y}(z)\,\dr z
\end{equation}
in the sense of distributions. In particular, if
$\,(t_0+is)^{-1}\,Z_{x,y}(t_0+is)\in L^2(\R)\,$ for some
$\,t_0\in\R\,$ then $\,(t+is)^{-1}\,Z_{x,y}(t+is)\in L^2(\R)\,$ for
all $\,t>t_0\,$.

In \cite{Randol}, for a surface of constant negative curvature, it
was shown that
\begin{equation}\label{zeta:l2}
(t+i\,\cdot)^{-1}Z_{x,y}(t+i\,\cdot)\in L^2(\R)\,,\qquad\forall
t>\frac{n-1}2\,,
\end{equation}
almost everywhere. By the above, this inclusion is  equivalent to
Theorem~\ref{upper:cor1} with
$\,\dr\nu=(\la+1)^{-1-\eps}\,\dr\la\,$.

Let $\langle s\rangle:=(1+|s|^2)^{1/2}$. In the last section we
shall prove the following two theorems.

\begin{theorem}\label{zeta:theorem1}
$|\,Z_{x,y}(t+is)\,|\ \leq\ \begin{cases} C_t\,,&\text{if}\ \ n<t\,,\\
C_t\left(|s|^{n-t}+d_{x,y}^{\,t-n}\right)\,,&\text{if}\ \ \frac n2\leq t<n\,,\\
C_t\left(|s|^{n-t}+d_{x,y}^{\,t-n}\langle s\rangle^{\frac
n2-t}\right)\,,&\text{if}\ \ t<\frac n2\,.
\end{cases}$
\end{theorem}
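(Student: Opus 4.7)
My plan is to start from the integral representation
$$
Z_{x,y}(t+is)\ =\ (t+is)\int_0^\infty\la^{-t-is-1}N_{x,y}(\la)\,\dr\la,
$$
valid by analytic continuation from $\re z$ large, together with the splitting $N_{x,y}(\la)=\mathcal{N}(\la,d_{x,y})+R_{x,y}(\la)$ suggested by Example \ref{Rn}, which induces a decomposition $Z_{x,y}=Z^{\mathcal N}+Z^R$. The three cases of the theorem will then be handled separately.

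For $t>n$ the situation is essentially trivial: the series $Z_{x,y}(t+is)=\sum_j\la_j^{-t-is}\phi_j(x)\overline{\phi_j(y)}$ converges absolutely, and Cauchy--Schwarz bounds it by $\sqrt{Z_{x,x}(t)Z_{y,y}(t)}$, each factor being a constant $C_t$ by the Weyl formula \eqref{intro:bound1}. For $t<n$ I would handle $Z^{\mathcal N}$ by explicit calculation: substituting \eqref{specrn} into the Mellin integral and invoking the classical identity
$$
\int_0^\infty u^{\sigma-1}J_\nu(u)\,\dr u\ =\ 2^{\sigma-1}\,\Gamma\!\Bigl(\tfrac{\nu+\sigma}{2}\Bigr)\Big/\Gamma\!\Bigl(\tfrac{\nu-\sigma}{2}+1\Bigr)
$$
(extended by meromorphic continuation outside its strip of convergence) yields an expression of the form $C\,d_{x,y}^{\,t+is-n}(t+is)\,\Gamma(\tfrac{n-t-is}{2})/\Gamma(\tfrac{t+is}{2}+1)$. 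Stirling's asymptotic on the line $\re z=t$ then delivers the pointwise bound $|Z^{\mathcal N}(t+is)|\leq C_t\,d_{x,y}^{\,t-n}\langle s\rangle^{n/2-t}$, which collapses to $C_t\,d_{x,y}^{\,t-n}$ when $t\geq n/2$. This accounts for both the $d_{x,y}^{\,t-n}$ term in case (2) and the $d_{x,y}^{\,t-n}\langle s\rangle^{n/2-t}$ term in case (3).

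For $Z^R$ I would start from the pointwise bound $|R_{x,y}(\la)|\leq C\la^{n-1}$, which follows from Hörmander's estimate \eqref{intro:bound2} together with the asymptotic \eqref{asympchar}, and split the Mellin integral at $\la=|s|$. The low-frequency part is estimated directly by
$$
|t+is|\int_0^{|s|}\la^{-t-1}\cdot C\la^{n-1}\,\dr\la\ \leq\ C|s|^{n-t},
$$
producing exactly the $|s|^{n-t}$ contribution. On the range $\la>|s|$ I would exploit the oscillation of $\la^{-is}$: representing $R_{x,y}$, up to smooth lower-order corrections, through the Hadamard parametrix for $\sigma_{x,y}(t)$ near $|t|=d_{x,y}$ --- the same tool used in the proof of Theorem \ref{upper:theorem} --- and integrating by parts in $\la$ to extract factors of $|s|^{-1}$, the resulting contributions are of at most the same orders already obtained.

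The main obstacle is precisely this last step: achieving sharp pointwise (rather than $L^2$-averaged) control on the high-frequency piece of $Z^R$. One must describe $R_{x,y}$ via the Hadamard parametrix accurately enough that the oscillatory cancellation in $s$ can be extracted without introducing spurious $d_{x,y}$-singularities and, in particular, so that the resulting bound matches the $Z^{\mathcal N}$-type bound already obtained. I expect this technical step to rely on the same parametrix machinery used throughout the paper, carefully tracking the prefactors in $d_{x,y}$ at each integration by parts.
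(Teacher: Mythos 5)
Your decomposition is genuinely different from the paper's. You split $N_{x,y}=\mathcal{N}(\la,d_{x,y})+R_{x,y}$ and accordingly $Z_{x,y}=Z^{\mathcal N}+Z^R$, whereas the paper splits $N_{x,y}=N_{x,y;0}+N_{x,y;1}$ via convolution with $\rho$ and estimates $Z_{x,y;0}$ (Hadamard parametrix, Section~\ref{zeta:z0}) and $Z_{x,y;1}$ (cancellation in $h_1(z,\mu)$, Lemma~\ref{zeta:lemma-h1} and Corollary~\ref{z1:cor1}) separately. Your treatment of $t>n$ by Cauchy--Schwarz and of $Z^{\mathcal N}$ by the Mellin--Bessel identity and Stirling are both correct and, as far as they go, cleaner than the paper's. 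However, the two pieces of your decomposition do not line up with the two terms in the claimed bound: $Z^{\mathcal N}$ produces the $d_{x,y}^{\,t-n}\langle s\rangle^{n/2-t}$ term but contributes nothing like $|s|^{n-t}$, so the \emph{entire} $|s|^{n-t}$ term must come from $Z^R$. That is where both gaps lie.

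The first gap is that your low-frequency estimate is simply wrong for $t$ close to $n$. You bound the part over $\la\leq|s|$ by taking absolute values,
$$
|t+is|\int_0^{|s|}\la^{-t-1}\cdot C\la^{n-1}\,\dr\la\,,
$$
and claim this is $O(|s|^{n-t})$. That is only true for $t<n-1$. Once $t\geq n-1$ the integral $\int^{|s|}\la^{n-t-2}\,\dr\la$ is $O(\log|s|)$ or $O(1)$, so after multiplying by $|t+is|\sim|s|$ you get $O(|s|\log|s|)$ or $O(|s|)$ --- strictly worse than $|s|^{n-t}$, which for $n-1<t<n$ is a sublinear power of $|s|$. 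No absolute-value estimate can rescue this: the exponent $n-t$ reflects genuine cancellation from the oscillation of $\la^{-is}$, and this cancellation has to be exploited over the whole range of $\la$, not just $\la>|s|$. This is precisely what the paper's Lemma~\ref{zeta:lemma-h1} does, via the interpolation estimate $|h_1(z,\mu)|\leq C_{t,\vk}\mu^{-t-\vk}\langle s\rangle^{\vk}(1+|\ln\langle s\rangle-\ln\mu|)^{-r}$, which supplies the gain $\mu^{-\vk}\langle s\rangle^{\vk}$ uniformly.

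The second gap is the high-frequency step you yourself flagged as the obstacle, and it is more serious than a technicality. Your $R_{x,y}(\la)=N_{x,y}(\la)-\mathcal{N}(\la,d_{x,y})$ is a step function (jumps at every $\la_j$); integrating by parts in $\la$ to extract powers of $|s|^{-1}$ produces a sum of delta functions which you cannot estimate pointwise. The paper circumvents this by first convolving with $\rho$: the smooth part $N_{x,y;0}$ is where the Hadamard parametrix lives and integration by parts is licit, while the residual $N_{x,y;1}$ is written, by Lemma~\ref{n1:conv}, as a convolution $\int\rho_1(\la-\mu)\,\dr N_{x,y}(\mu)$ and the oscillatory cancellation is extracted from the explicit kernel $\rho_1$ rather than from $N_{x,y}$ itself. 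Relatedly, your plan to ``represent $R_{x,y}$ through the Hadamard parametrix'' cannot close, because the parametrix \eqref{n0:hadamard} only describes $\sigma_{x,y}$ for small $|t|$; the contribution of longer geodesics (singularities of $\sigma_{x,y}$ outside $\supp\hat\rho$) is not captured by it, is generically as large as the main term, and is exactly what $N_{x,y;1}$ carries. Without isolating and controlling that piece --- which is the content of Corollary~\ref{z1:cor1} and the source of the $\langle s\rangle^{n-t}$ bound --- your $Z^R$ estimate cannot be completed.
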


\begin{theorem}\label{zeta:theorem2}
For all $\,x\in M\,$ and all $\,\eps>0\,$, we have
\begin{equation}\label{zeta:main}
\int_M\,\frac{|Z_{x,y}(t+is)|^2} {d_{x,y}^{\,2t-n-\eps}+1}\;\dr y\
\leq\ C_{t,\eps}\left(\langle s\rangle^{n-2t}+1\right),
\qquad\forall t\ne\frac{n}2\,.
\end{equation}
\end{theorem}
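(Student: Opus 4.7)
The plan is to decompose $Z_{x,y}$ into a Euclidean model piece plus a remainder, and to treat the regimes $t>n/2$ and $t<n/2$ separately. For $t>n/2$, the eigenfunction expansion $Z_{x,y}(t+is)=\sum_j \la_j^{-t-is}\phi_j(x)\overline{\phi_j(y)}$ converges in $L^2(M,\dr y)$, because $\sum_j\la_j^{-2t}|\phi_j(x)|^2 = Z_{x,x}(2t)<\infty$ by the Weyl law. Parseval then yields $\int_M|Z_{x,y}(t+is)|^2\,\dr y = Z_{x,x}(2t)$, a constant independent of $s$, and since $(d_{x,y}^{\,2t-n-\eps}+1)^{-1}\le 1$ this at once dominates \eqref{zeta:main}: the right-hand side is $\ge 1$ so $Z_{x,x}(2t)\le C_{t,\eps}(\langle s\rangle^{n-2t}+1)$.

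For $t<n/2$, set $Z^0(d,z) := z\int_0^\infty\la^{-z-1}\,\mathcal{N}(\la,d)\,\dr\la$ and $R_{x,y}(z) := Z_{x,y}(z) - Z^0(d_{x,y},z)$. Combining \eqref{specrn} with the Mellin transform of the Bessel function $J_{n/2}$ gives the explicit identity $Z^0(d,t+is) = C_{t+is}\,d^{\,t+is-n}$ with
$$C_{t+is} = \frac{\Gamma\bigl((n-t-is)/2\bigr)}{2^{t+is}\,\pi^{n/2}\,\Gamma\bigl((t+is)/2\bigr)}.$$
Stirling's formula furnishes $|C_{t+is}|^2\le C_t(\langle s\rangle^{n-2t}+1)$ for $t\ne n/2$, while
$$\int_M\frac{d_{x,y}^{\,2t-2n}}{d_{x,y}^{\,2t-n-\eps}+1}\,\dr y\ \le\ C_\eps,$$
since the integrand is bounded by $d_{x,y}^{\,\eps-n}$ near $y=x$ (integrable on the $n$-manifold once $\eps>0$) and by a constant elsewhere. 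These two bounds multiply together to give the required estimate \eqref{zeta:main} for the $Z^0$ contribution.

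For the remainder I would use the Mellin representation
$$R_{x,y}(t+is)\ =\ (t+is)\int_0^\infty \la^{-t-is-1}\bigl(N_{x,y}(\la)-\mathcal{N}(\la,d_{x,y})\bigr)\,\dr\la$$
together with Theorem~\ref{upper:theorem}, which supplies $\int_M|N_{x,y}(\la)-\mathcal{N}(\la,d_{x,y})|^2\,\dr y \le C\la^{n-1}$ uniformly in $\la$ and $x$. After integrating in $y$, the Mellin--Plancherel identity readily gives an $L^2$-in-$s$ bound of the form $\int_\R\int_M |R(t+is)|^2\,|t+is|^{-2}\,\dr y\,\dr s \le C_t$ valid for $t>(n-1)/2$. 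The pointwise-in-$s$ estimate demanded by \eqref{zeta:main} is then extracted by a Phragm\'en--Lindel\"of-type complex interpolation applied to the analytic family $z\mapsto \int R(z,y) f(y)\,\dr y$ for test functions $f$ in the dual of the weighted space, interpolating between the line $\re z = t_0>n/2$ (where Parseval gives a constant bound on $Z$, and the $Z^0$ contribution is also controlled) and a cruder polynomial-in-$s$ bound on another line.

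The main obstacle is this last step. A direct Cauchy--Schwarz or Minkowski estimate on the Mellin integral for $R$ loses a factor $|t+is|^2\sim s^2$, which is far larger than the target $\langle s\rangle^{n-2t}$ when $t$ is close to $n/2$ and so $n-2t$ is small. Matching the sharp power of $s$ requires a genuine exploitation of the oscillation $\la^{-is}$ in the integrand, paired with the $L^2(M,\dr y)$ regularity of $N-\mathcal{N}$ from Theorem~\ref{upper:theorem} — either through a stationary-phase cancellation in the Mellin integrand or via a carefully arranged complex interpolation along the strip.
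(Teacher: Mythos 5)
Your argument for $t>n/2$ is fine: Parseval gives $\int_M|Z_{x,y}(t+is)|^2\,\dr y=Z_{x,x}(2t)$, which converges precisely when $2t>n$ and trivially dominates the right-hand side of \eqref{zeta:main} because that side is bounded below by $C_{t,\eps}$. Your identification of $Z^0(d,z)$ as a $\Gamma$-quotient times $d^{z-n}$ and the ensuing Stirling estimate $|C_{t+is}|^2\leq C_t(\langle s\rangle^{n-2t}+1)$ are likewise correct, and play the same role as the $\Gamma$-quotient asymptotics that control the parametrix contribution $Z_{x,y;0}$ in the paper.

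The gap is exactly where you flag it: for $t<n/2$ the estimate for the remainder $R_{x,y}=Z_{x,y}-Z^0(d_{x,y},\cdot)$ is never closed. Mellin--Plancherel applied to $N_{x,y}-\mathcal N$ together with Theorem~\ref{upper:theorem} yields only an $L^2$-in-$s$ bound carrying an intrinsic loss of $|t+is|^2\sim\langle s\rangle^2$, far worse than the target $\langle s\rangle^{n-2t}$ near $t=n/2$. The Phragm\'en--Lindel\"of step you invoke cannot be applied to $z\mapsto\int_M|Z_{x,y}(z)|^2(d_{x,y}^{\,2\re z-n-\eps}+1)^{-1}\,\dr y$, since the weight depends on $\re z$ and $|Z|^2$ is not holomorphic, so this is not an analytic family; and even after linearizing, you supply neither a second endpoint estimate nor the growth control in the strip that interpolation requires.

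The paper sidesteps this by using a different decomposition: $Z_{x,y}=Z_{x,y;0}+Z_{x,y;1}$ according to $N_{x,y}=\rho*N_{x,y}+(N_{x,y}-\rho*N_{x,y})$, not $N_{x,y}=\mathcal N+(N_{x,y}-\mathcal N)$. For $Z_{x,y;1}(z)=\int h_1(z,\mu)\,\dr N_{x,y}(\mu)$, eigenfunction orthogonality collapses $\int_M|Z_{x,y;1}|^2\,\dr y$ to $\int|h_1(z,\mu)|^2\,\dr N_{x,x}(\mu)$, and Lemma~\ref{zeta:lemma-h1} provides the crucial pointwise localization $|h_1(z,\mu)|\leq C_{t,\vk}\,\mu^{-t-\vk}\langle s\rangle^{\vk}(1+|\ln\langle s\rangle-\ln\mu|)^{-r}$, which pins the effective range to $\mu\asymp\langle s\rangle$ and, with $\vk=|n/2-t|$, yields exactly $\langle s\rangle^{n-2t}$. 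This is the stationary-phase cancellation in the Mellin integrand you were hoping for, but it is extracted through a pointwise bound on the fixed convolution kernel $h_1$, not through complex interpolation. Your remainder $R_{x,y}$ is not represented as a Stieltjes integral of a fixed kernel against $\dr N_{x,y}$, so this mechanism does not transfer, and the crude $|t+is|^2$ loss persists.
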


In view of Fubini's theorem, \eqref{zeta:main} immediately implies
\eqref{zeta:l2} for all $x$ and almost all $y$. However, even the
improved estimate \eqref{zeta:main} does not seem to be sufficient
to obtain our upper bounds for the spectral function.

\begin{remark}\label{z1:remark2}
It is quite possible that Theorems \ref{zeta:theorem1} and
\ref{zeta:theorem2} remain valid for $t=n$ and $t=\frac{n}2$, but
our proof does not work in these cases.
\end{remark}

\subsection{Possible generalizations}\label{main:generalizations}

All the above results can easily be extended to an elliptic
self-adjoint pseudodifferential operator $A$ acting on a compact
manifold without boundary. For such an operator, one has to consider
trajectories of the Hamiltonian flow generated by its principal
symbol instead of geodesics, and to use Fourier integral operators
or the global parametrix constructed in \cite{SV} instead of the
Hadamard representation \eqref{n0:hadamard} for the study of
singularities of $\,\si_{x,y}(t)\,$.

Note that some of our estimates do not require the spectrum to be
discrete. In particular, it may well be possible to extend the
results that do not involve integration over $M$ to the case of
noncompact manifolds. For instance, the functions
$\,\int\rho_1(\la-\mu)\,\dr N_{x,y}(\mu)$ and
$\,\int|\rho_1(\la-\mu)|^2\,\dr N_{x,y}(\mu)$ are the integral
kernels of the operators $\,\rho_1(\la-A)\,$ and
$\,|\rho_1(\la-A)|^2\,$, where $A$ is the restriction of
$\sqrt\Delta$ to the subspace spanned by the eigenfunctions
$\phi_1,\phi_2,\ldots$. Therefore the key equality \eqref{n1:proof1}
is easily obtained by rewriting the obvious operator identity
$\,\left(\rho_1(\la-A)\right)^*\rho_1(\la-A) =|\rho_1(\la-A)|^2\,$
in terms of integral kernels.

It would be also interesting to extend our results to an elliptic
self-adjoint differential operator on a manifold with boundary,
subject to suitable boundary conditions. In this case the role of
geodesics is played by Hamiltonian billiards. One has to consider
interior points $x$ and $y$ and to make appropriate assumptions to
avoid problems with the so-called grazing and dead-end trajectories
(see \cite{SV}).

\section{Almost periodic properties of the spectral function}
\label{sect-conj}

\subsection{Besicovitch almost periodic functions} \label{main:conjectures}
Let $\,p\geq1\,$. Recall that, for a measurable function $f$ on
$\R_+$, its {\it Besicovitch seminorm} $||f||_{{\mathcal B}^p}$  is
defined by
\begin{equation}
||f||_{{\mathcal B}^p}\ :=\ \limsup_{T\to \infty} \left(\frac{1}{T}
\int_0^{T} |f(\la)|^p\,\dr\la \right)^{1/p}
\end{equation}
The space $B^p$ of {\it Besicovitch almost periodic functions} is
defined as the completion of the linear space of all finite
trigonometric sums $\sum_{k=1}^N a_k e^{i\theta_k x}$ with
$a_k\in\C$ and $\theta_k\in\R$ with respect to the Besicovitch
seminorm. Clearly, $\,B^{p_1}\subset B^{p_2}\,$ and
$\,\|f\|_{B^{p_2}}\leq\|f\|_{B^{p_1}}\,$ for all $\,p_1>p_2\,$.

For each real-valued function $\,f\in B^p\,$, there exists a
sequence of real numbers $\,\theta_k\,$  called the {\it
frequencies} of $f$, such that
\begin{equation}
\label{almperexp1}
 \lim_{N\to \infty} ||f-\sum_{k=1}^N
a_k\, \sin (\theta_k x + \phi_k)||_{{\mathcal B}^p}\ =\ 0,
\end{equation}
where the coefficients $\,a_k\in \mathbb{R}$ and the phase shifts
$\phi_k \in \mathbb{R}$ are some constants (see, for example,
\cite{Bes}). If \eqref{almperexp1} holds, we shall write $\,f \sim
\sum a_k\, \sin (\theta_k x + \phi_k)$.

Theorem \ref{upper:cor1} motivates the following
\begin{conj}
\label{besic:weak} On any compact Riemannian manifold $M$, $\|\tilde
N_{x,y}(\la)\|_{{\mathcal B}^2}<\infty$ for each fixed $x \in M$ and
almost all $y\in M$.
\end{conj}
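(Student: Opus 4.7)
The plan is to reduce the conjecture to Theorem~\ref{upper:theorem} via the triangle inequality for $\|\cdot\|_{\cB^2}$, using the same ``regularizing'' subtraction of the Euclidean model $\tilde{\mathcal{N}}(\la,d_{x,y})$ that appears in \eqref{upper:int0}. Writing
$$\|\tilde N_{x,y}(\la)\|_{\cB^2}\ \leq\ \|\tilde N_{x,y}(\la)-\tilde{\mathcal{N}}(\la,d_{x,y})\|_{\cB^2}\ +\ \|\tilde{\mathcal{N}}(\la,d_{x,y})\|_{\cB^2},$$
the second summand is controlled pointwise in $y\ne x$: by \eqref{asympchar}, the function $\tilde{\mathcal{N}}(\la,d_{x,y})$ is, up to a uniformly $O(\la^{-1})$ remainder, a pure sinusoid in $\la$ of amplitude $2(2\pi d_{x,y})^{-(n+1)/2}$, so that $\|\tilde{\mathcal{N}}(\cdot,d_{x,y})\|_{\cB^2}^2=2(2\pi d_{x,y})^{-(n+1)}$ is finite on the full-measure set $\{y:y\ne x\}$. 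It thus remains to bound the first summand almost everywhere.

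Setting $R_{x,y}(\la):=\tilde N_{x,y}(\la)-\tilde{\mathcal{N}}(\la,d_{x,y})$, integrating Theorem~\ref{upper:theorem} over $\la\in(0,T)$ and swapping the order of integration via Fubini yield, for all $T>0$,
$$\int_M g_T(y)\,\dr y\ \leq\ C_M\,,\qquad g_T(y)\ :=\ \frac{1}{T}\int_0^T|R_{x,y}(\la)|^2\,\dr\la.$$
Since $T\mapsto\int_0^T|R_{x,y}|^2\,\dr\la$ is non-decreasing, one has $g_T(y)\leq 2\,g_{T_{k+1}}(y)$ whenever $T\in[T_k,T_{k+1}]$ with $T_k:=2^k$; hence the conjecture is equivalent to showing that $\sup_{k\geq 1}g_{T_k}(y)<\infty$ for almost every $y\in M$.

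This last reduction is where the entire difficulty sits, and it is the reason the statement is only a conjecture: an $L^1(M)$-bounded sequence of nonnegative measurable functions need not have a finite pointwise supremum almost everywhere. The natural strategy to close the gap is to prove the stronger statement that $g_T(y)$ actually \emph{converges} as $T\to\infty$ for almost every $y$, since Fatou's lemma applied to the uniform $L^1$ bound above would then place $\lim_T g_T(y)$ in $L^1(M)$ and in particular make it finite a.e. Such a convergence would follow from a Besicovitch-type Fourier expansion
$$R_{x,y}(\la)\ \sim\ \sum_\gamma c_\gamma(x,y)\,\sin(\ell_\gamma\la+\phi_\gamma(x,y)),$$
indexed by the geodesic segments $\gamma$ joining $x$ and $y$, with lengths $\ell_\gamma$ and with amplitudes and phases $c_\gamma,\phi_\gamma$ furnished by the Hadamard parametrix used in the proof of Theorem~\ref{upper:theorem}; Parseval's identity for $\cB^2$ would then give $\|R_{x,y}\|_{\cB^2}^2=\tfrac12\sum_\gamma|c_\gamma(x,y)|^2$. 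The hard part is therefore twofold: one must justify such an expansion in $\cB^2$ (in particular, show that the contributions of long geodesics, whose individual amplitudes are of order $\ell_\gamma^{-(n+1)/2}$, sum coherently), and one must establish that $\sum_\gamma|c_\gamma(x,y)|^2$ is integrable in $y$. Both steps require quantitative control of the geodesic flow on $M$ --- for instance, a bound on the number of geodesic segments between $x$ and $y$ of length at most $R$ --- that goes beyond anything used in proving the upper bound in Theorem~\ref{upper:theorem}.
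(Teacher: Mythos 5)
What you were asked to prove is stated as a \emph{conjecture} in the paper, not a theorem, and the authors do not claim a proof; they verify it only for round spheres and flat $2$-tori (Examples~\ref{besic:torus} and \ref{besic:sphere}), in each case by establishing precisely the kind of Besicovitch almost periodic expansion you propose, and they formalize such an expansion as Problem~\ref{besic:conj3}. Your assessment that Theorem~\ref{upper:theorem} alone is insufficient is therefore the right one, and the partial reasoning you give is correct: the triangle inequality for the Besicovitch seminorm, the computation $\|\tilde{\mathcal{N}}(\cdot,d_{x,y})\|_{{\mathcal B}^2}^2 = 2(2\pi d_{x,y})^{-(n+1)}$ from \eqref{asympchar} for each fixed $y\ne x$, the uniform $L^1(M)$ bound $\int_M g_T(y)\,\dr y\le C_M$ obtained by integrating Theorem~\ref{upper:theorem} and applying Fubini, the dyadic reduction, and --- the crucial point --- your observation that a uniform $L^1$ bound on a family of nonnegative measurable functions does not control the pointwise $\limsup_T g_T(y)$ (which is exactly the squared ${\mathcal B}^2$ seminorm of $R_{x,y}$) are all sound.

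Your diagnosis of the missing ingredients also matches the paper. One needs an almost periodic expansion with frequencies $l(\gamma)$ running over geodesic segments $\gamma$ joining $x$ and $y$, so that $g_T(y)$ actually converges and Fatou's lemma applies, together with integrability in $y$ of the Parseval sum $\sum_\gamma|c_\gamma(x,y)|^2$. This is essentially the content of Problem~\ref{besic:conj3}, where the conjectural amplitudes are written in terms of the Jacobi determinants $a(\gamma)$ and the phase shifts in terms of Morse indices $\omega(\gamma)$; and the remark following that problem explicitly notes that validity of such an expansion in $B^p$ for some $p>1$ imposes an $\ell_q$ condition on the Fourier coefficients which ``can be viewed as a dynamical condition on the manifold'' and ``it is not clear whether it always holds,'' citing \cite{ABI} and \cite{Pat}. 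So the quantitative control on the geodesic flow you flag as the genuine difficulty is precisely the obstruction the authors acknowledge.
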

Conjecture \ref{besic:weak} holds for round spheres and flat
$2$-tori. In fact, in Examples \ref{besic:torus} and
\ref{besic:sphere}  we prove more than finiteness of the Besicovitch
seminorm: it is shown that the rescaled spectral function on these
manifolds is $B^2$--almost periodic for each fixed $x$ and almost
all $y$, which implies the finiteness of the Besicovitch seminorm.
In both cases, the set of frequencies coincides with the set
${\mathcal L}_{x,y}$ of lengths of all geodesic segments joining $x$
and $y$.
\begin{remark}
It was proved in \cite{Bl} and \cite{KMS} that the rescaled error
term in Weyl's law has an almost periodic expansion in $B^2$ on
surfaces of revolution and in $B^1$ on Liouville tori. On Zoll
manifolds, the rescaled Weyl remainder (albeit with a different
order of rescaling) has an almost periodic expansion in $B^2$
\cite{Sch}. The frequencies of these expansions are the lengths of
closed geodesics. The spectral function, similarly to the Weyl
remainder, has oscillatory behaviour, and hence it is natural to
study it in the context of almost periodic functions. Moreover, as
indicated by Theorem \ref{upper:cor1}, the order of rescaling in
this case could be chosen {\it universally} for all manifolds of a
given dimension. As was mentioned in Section \ref{intro:disc}, the
lengths of geodesic segments joining $x$ and $y$ are the
singularities of the distribution $\si_{x,y}(t)$,  and hence play
the same role for the spectral function as the lengths of closed
geodesics for the Weyl remainder. The link between the spectral
function and the set of lengths  ${\mathcal L}_{x,y}$ has a natural
interpretation from the viewpoint of the quantum--classical
correspondence.
\end{remark}

\subsection{Spectral function on spheres and tori}
Let us start with a toy example --- the spectral function on the
unit circle $\Sbb^1$:
\begin{equation}
\label{specfuncircle} \tilde N_{x,y}(\la)=
N_{x,y}(\la)=\frac{1}{\pi} \sum_{1 \leq n < \la} \cos(n\,d_{x,y}) =
-\frac{1}{2\pi} + \frac{1}{2\pi} \frac{\sin((\lceil \la \rceil -
\frac{1}{2})d_{x,y})}{\sin(\frac{d_{x,y}}{2})}
\end{equation}
Note that in dimension one no rescaling occurs, and the constant
term $(2\pi)^{-1}$ is subtracted,  because the eigenfunction
corresponding to the zero eigenvalue is excluded in the definition
\eqref{specfundef}.  In higher dimensions the contribution of the
constant term to $\tilde N_{x,y}(\la)$ is negligible in $B^p$ due to
the rescaling.

\begin{example}
\label{example:circle} If $M$ is the unit circle then the spectral
function $\,N_{x,y}(\la)\,$ is $B^2$--almost periodic for all $x \ne
y$ with the set of frequencies $\,{\mathcal
L}_{x,y}\,=\{|d_{x,y}+2\pi k|\,\}_{k\in\Z}\,$, and
\begin{equation}
\label{approxcircle} N_{x,y}(\la) \sim -\frac{1}{2\pi}+
\sum_{-\infty}^{+\infty} \frac{1}{\pi|d_{x,y} + 2\pi k|} \sin(\la
|d_{x,y} + 2\pi k|).
\end{equation}
\end{example}

The next two examples generalize Example \ref{example:circle} to
round spheres and flat two dimensional tori.

\begin{example}
\label{besic:torus} On a flat square $2$-torus $\mathbb T^2 = \R^2 /
(2\pi\Z)^2$, the rescaled spectral function is $B^2$--almost
periodic for all $x \ne y$ and
\begin{equation}
\label{besic:tori1} \tilde N_{x,y}(\la)\ \sim\ \sum_{\eta \in
\mathbb Z^2}
\frac{2\,\sin\left(\la\,|x-y+2\pi\eta|-\frac{\pi}{4}\right)}
{(2\pi)^{3/2}\,|x-y+2\pi\eta|^{3/2}}\,.
\end{equation}
Here $|\cdot|$ denotes the length of a vector in $\mathbb{R}^2$. A
similar formula holds on flat tori corresponding to arbitrary
lattices.
\end{example}

\begin{remark} \label{remark:tori} In dimensions $n \geq3$ the situation is
significantly more complicated. For example, following the argument
of \cite[Corollary 1.4]{Peter} one could show that $||\tilde
N_{x,y}(\lambda)||_{{\mathcal B}^2}=\infty$ on a flat $n$-torus
${\mathbb T}^n=\R^n / (2\pi\Z)^n$ if the vector $ \pi^{-1}(x-y)$ is
{\it rational}. This happens due to unbounded multiplicities in the
set ${\mathcal L}_{x,y}$. However, it does not contradict Conjecture
\ref{besic:weak} because for each fixed $x\in \mathbb{T}^n$,
the vector $\pi^{-1}(x-y)$ has rationally independent coordinates for almost all $y\in
\mathbb{T}^n$.
\end{remark}

Recall that the Morse index of a geodesic segment joining $x$ and
$y$ is the number of points on the segment that are conjugate to
$x$, counted with multiplicities (see \cite[Theorem 15.1]{Mil}). Let
$H(x)$ be the ``reversed'' Heaviside function: $H(x) = 0$ if $x \geq
0$ and $H(x) = 1$ if $x < 0$.

\begin{example}\label{besic:sphere}
Let $M$ be the unit round sphere $\Sbb^n$ of dimension $n\geq 2$,
and let $x \ne y\in \Sbb^n$ be any two non-opposite points. Then
$\tilde N_{x,y}(\la)\in B^2$, and
\begin{equation}\label{besic:sphere1}
\tilde N_{x,y}(\la)\ \sim\
\sum_{k=-\infty}^{+\infty} \frac{2\,\sin\left(\la\,|d_{x,y}+2\pi k|
- \frac{(n-1+2\omega_k)\pi}{4}\right)}{(2\pi)^{\frac{n+1}{2}} (\sin
d_{x,y})^{\frac{n-1}{2}}|d_{x,y}+2\pi k|}
\end{equation}
where $\,\omega_k=(n-1)\left(2\,|k|-H(k)\right)\,$ is the Morse
index of the geodesic segment of length $|d_{x,y}+2\pi k|\,$.
\end{example}

Note that, unlike \eqref{besic:tori1}, the phase shifts in the
expansion \eqref{besic:sphere1} depend on the number of conjugate
points on the geodesic segments.

\subsection{An open problem}
In this section we describe a possible route for generalizing
Examples \ref{besic:torus} and \ref{besic:sphere}. Let $M$ be an
arbitrary compact $n$-dimensional Riemannian manifold.  Let
$\Gamma_{x,y}$ be the set of all geodesic segments joining $x$ and
$y$. For every $\gamma \in \Gamma_{x,y}$, let $l(\gamma)$ be its
length and $\omega(\gamma)$ be its Morse index. As before, set
${\mathcal L}_{x,y}=\{l(\gamma)|\, \gamma \in \Gamma_{x,y}\}$. Along
each geodesic segment $\gamma \in \Gamma_{x,y}$, consider the matrix
Jacobi equation $A''+R A =0$, where the coefficient $R$ is defined
in terms of the Riemann curvature tensor and the parallel transport
along $\gamma$  (see \cite[p. 104]{Chavel}). We assume that $\gamma$
is naturally parametrized, and $A$ satisfies the initial conditions
$A(0,\xi_\gamma)=0$, $A'(0,\xi_\gamma)=1$, where $\xi_\gamma$ is the
unit tangent vector to $\gamma$ at the point $x$. Set
$a(\gamma)=|\det A(l(\gamma),\xi_\gamma)|$. For example,
$a(\gamma)=l(\gamma)^{n-1}$ on a flat square $n$-torus and
$a(\gamma)=|\sin l(\gamma)|^{n-1}$ on a round $n$-sphere. In
dimension two, $a(\gamma)=|J(l(\gamma))|$, where $J(t)$ is the
orthogonal Jacobi field along $\gamma$ with the initial conditions
$J(0)=0$ and $J'(0)=1$.

\begin{problem}
\label{besic:conj3} Let $M$ be a compact $n$-dimensional Riemannian
manifold. Is it true that for all $x \in M$ and almost all $y\in M$
the rescaled spectral function $\tilde N_{x,y}(\la)$ has an almost
periodic expansion
\begin{equation}
\label{almperexp} \tilde N_{x,y}(\la) \sim
\frac{2}{(2\pi)^{\frac{n+1}{2}}} \sum_{\gamma \in \Gamma_{x,y}}
\frac {\sin(\la l(\gamma) - \frac{(n-1)\pi}{4}-\omega(\gamma)
\frac{\pi}{2})}{l(\gamma) \sqrt{a(\gamma)}}
\end{equation}
in $B^p$ for some $p \geq1$?
\end{problem}

Let us show that the expansion \eqref{almperexp} is well-defined if
the points $x,y \in M$ are not conjugate along any geodesic joining
them (by \cite[Corollary 18.2]{Mil}, this condition is satisfied for
any
fixed $x\in M$ and almost all $y \in M$).   
Indeed, if the points $x, y$ are not conjugate along any geodesic,
$a(\gamma) \ne 0$ for any $\gamma$, the set of lengths ${\mathcal
L}_{x,y}$ is discrete, and each element has finite multiplicity,
i.e. appears in ${\mathcal L}_{x,y}$ at most a finite number of
times \cite[Theorem 16.3]{Mil}. This implies, in particular, that
the set ${\mathcal L}_{x,y}$ is infinite, because for any two points
on a compact manifold there exists an infinite number of geodesic
segments joining them \cite{Serre}. If $M$ has no conjugate points,
it is easy to check that \eqref{almperexp} agrees with \cite[formula
(5.1.3)]{JP}.

\begin{remark}
If \eqref{almperexp} does hold for some $\,p\geq1\,$ on a given
manifold, it would be interesting to determine the maximal possible
value of $\,p\,$. For instance, it is quite likely that for round
spheres one can take any $p\geq1$.
Note that if the almost periodic expansion is valid for some $p>1$,
then the Fourier coefficients of \eqref{almperexp} lie in $l_q$ for
$q=\max(2,\frac{p}{p-1})$ (\cite[section 4]{ABI}). This can be
viewed as a dynamical condition on the manifold (see \cite[Section
3.1]{Pat} for some related results), and it is not clear whether it
always holds. At the same time, even for $p=1$,  a positive answer
to Problem \ref{besic:conj3} provides a lot of useful information
about the spectral function; in particular, it implies that the
rescaled spectral function has a {\it limit distribution} (see
\cite[Appendix II]{KMS}). Understanding the properties of this
distribution on a given manifold is a problem of independent
interest.
\end{remark}

\section{Proofs of the upper bounds}
\label{proof-upper}
\subsection{Auxiliary functions}
\label{proof-upper:auxiliary} Let us fix a real-valued even rapidly
decreasing function $\,\rho\in C^\infty(\R)\,$ satisfying the
following condition.
\begin{condition}\label{rho}
$\,\hat\rho\in C_0^\infty(\R)\,$, $\,\hat\rho\equiv1\,$ in a
neighbourhood of the origin and $\,\supp\hat\rho\subset(-\de,\de)\,$
for some $\de>0\,$.
\end{condition}

Condition~\ref{rho} implies that
$\,\int_0^\infty\rho(\tau)\,\dr\tau=\frac12\,$ and
$\int_0^\infty\tau^k\,\rho(\tau)\,\dr\tau=0$ for all $k=1,2,\ldots$
Let
$\,\rho_1(\tau):=\sign\tau\,\int_{|\tau|}^\infty\rho(\mu)\,\dr\mu\,$
if $\,\tau\ne0\,$, and $\,\rho_1(0):=-\frac12\,$. The function
$\rho_1$ is rapidly decreasing, odd and infinitely differentiable
outside the origin $\tau=0$, so that
$\,\frac{\dr\hfill}{\dr\tau}\,\rho_1(\tau)=-\rho(\tau)\,$ for all
$\tau\ne0\,$. It has a jump at the origin; in view of
Condition~\ref{rho}, $\rho_1(0)=\rho_1(-0)=-\rho_1(+0)=-\frac12\,$.

Let
\begin{equation}\label{n0,n1}
N_{x,y;0}:=\rho*N_{x,y}\,,\qquad N_{x,y;1}:=N_{x,y}\;-\;N_{x,y;0}\,.
\end{equation}
The following elementary lemma is a slight variation of \cite[Lemma
1.2]{S2}.

\begin{lemma}\label{n1:conv}
$N_{x,y;1}(\la)=\int\rho_1(\la-\mu)\,\dr N_{x,y}(\mu)\,$ for all
$\la\in\R$.
\end{lemma}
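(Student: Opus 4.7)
The plan is to establish the identity by a direct Fubini computation, matching the jump of $\rho_1$ at the origin with the left-continuity convention built into the definition of $N_{x,y}$. First I would write
\begin{equation*}
(\rho*N_{x,y})(\la)\ =\ \int_\R \rho(\la-\mu)\,N_{x,y}(\mu)\,\dr\mu\ =\ \int_\R \rho(\la-\mu)\left(\int_{-\infty}^\mu \dr N_{x,y}(\nu)\right)\dr\mu,
\end{equation*}
and swap the order of integration. This is legitimate because $N_{x,y}$ grows at most polynomially while $\rho$ is Schwartz, so the double integral is absolutely convergent. After the change of variables $t=\la-\mu$ in the inner integral, I obtain
\begin{equation*}
(\rho*N_{x,y})(\la)\ =\ \int_\R F(\la-\nu)\,\dr N_{x,y}(\nu),\qquad F(\tau):=\int_{-\infty}^{\,\tau}\rho(t)\,\dr t.
\end{equation*}

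The next step is to identify $F$ with $\rho_1$ up to a Heaviside term. Condition \ref{rho} forces $\hat\rho(0)=1$, so $\int_\R\rho=1$, and since $\rho$ is even, $\int_0^\infty\rho=\tfrac12$. A case analysis on the sign of $\tau$ then gives
\begin{equation*}
F(\tau)\ =\ H(\tau)\;-\;\rho_1(\tau),\qquad\tau\ne 0,
\end{equation*}
where $H$ is the Heaviside function with $H(\tau)=1$ for $\tau>0$ and $H(\tau)=0$ for $\tau<0$: indeed, for $\tau>0$ one has $F(\tau)=1-\int_\tau^\infty\rho=1-\rho_1(\tau)$, and for $\tau<0$ one has $F(\tau)=\int_{|\tau|}^\infty\rho=-\rho_1(\tau)$ by evenness. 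Substituting this into the previous display yields
\begin{equation*}
(\rho*N_{x,y})(\la)\ =\ \int_\R H(\la-\nu)\,\dr N_{x,y}(\nu)\ -\ \int_\R \rho_1(\la-\nu)\,\dr N_{x,y}(\nu).
\end{equation*}

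The first term on the right equals $N_{x,y}(\la)$ because $\dr N_{x,y}$ is supported on the eigenvalues $\{\la_j\}$ and the strict inequality in the definition $N_{x,y}(\la)=\sum_{0<\la_j<\la}\phi_j(x)\overline{\phi_j(y)}$ corresponds exactly to the convention $H(0)=0$. Subtracting and recalling that $N_{x,y;1}=N_{x,y}-N_{x,y;0}$ gives the claimed identity. The only delicate point is the behaviour at $\tau=0$: the atoms of $\dr N_{x,y}$ at $\la_j=\la$ must not be double-counted, and this is precisely why the value $\rho_1(0)=-\tfrac12$ is chosen as it is — one checks that with this convention the $\tau=0$ contribution to $\int\rho_1(\la-\nu)\,\dr N_{x,y}(\nu)$ is consistent with the left-continuous choice of $N_{x,y}$, so no boundary correction is needed. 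This is the only subtlety and is a purely bookkeeping matter.
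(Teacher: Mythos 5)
Your proof is correct, and it takes a genuinely different route from the one in the paper. The paper's proof is by integration by parts: splitting $\int\rho_1(\la-\mu)\,\dr N_{x,y}(\mu)$ at $\mu=\la$, integrating each piece by parts, and reading off the jump $\rho_1(+0)-\rho_1(-0)=1$ as the coefficient of $N_{x,y}(\la)$; the case $\la=\la_j$ then needs a separate argument in which the atom at $\la_j$ is peeled off so that the integration by parts applies to a function continuous at $\la$. You instead start from $N_{x,y;0}=\rho*N_{x,y}$, unfold $N_{x,y}(\mu)=\int_{-\infty}^\mu\dr N_{x,y}(\nu)$, and apply Fubini (justified by the Schwartz decay of $\rho$ and the polynomial bound on the total variation of $\dr N_{x,y}$ coming from Weyl's law and Cauchy--Schwarz); the antiderivative $F(\tau)=\int_{-\infty}^\tau\rho$ is then identified as $H(\tau)-\rho_1(\tau)$ using the evenness of $\rho$ and $\int\rho=1$, and the Heaviside term reconstitutes $N_{x,y}(\la)$ because the strict inequality in \eqref{specfundef} corresponds to $H(0)=0$. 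The upshot of your route is that the case $\la=\la_j$ is absorbed into the single convention $H(0)=0$ (which also makes $F(0)=\tfrac12=H(0)-\rho_1(0)$ exact), so no separate case analysis is needed; the cost is that you must verify absolute convergence to legitimise the Fubini swap, which you do address, whereas the paper's integration-by-parts argument needs no such verification. Both arguments reduce, in the end, to matching the jump of $\rho_1$ at the origin against the left-continuity of $N_{x,y}$, so neither is deeper than the other — yours is just organised so that this matching happens once, at the level of the scalar identity $F=H-\rho_1$, rather than twice, in two cases.
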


\begin{proof}
If $\la$ is not an eigenvalue then, integrating by parts, we
immediately obtain
\begin{multline*}
\int\rho_1(\la-\mu)\,\dr N_{x,y}(\mu)
=\int_{-\infty}^\la\rho_1(\la-\mu)\,\dr N_{x,y}(\mu)+
\int_\la^\infty\rho_1(\la-\mu)\,\dr N_{x,y}(\mu)\\
=\ \left(\rho_1(+0)-\rho_1(-0)\right)N_{x,y}(\la)
-\int\rho(\la-\mu)\,N_{x,y}(\mu)\,\dr\mu\,,
\end{multline*}
where the right hand side coincides with $N_{x,y;1}(\la)$. If
$\la=\la_j$ then the same equality holds for the function
$$
N_{x,y}^{(j)}(\la)\ :=\ N_{x,y}(\la)-\left(N_{x,y}(\la_j+0)
-N_{x,y}(\la_j-0)\right)\,\chi_j(\la)\,,
$$
where $\,\chi_j(\la)\,$ is the characteristic function of the
interval $\,(\la_j,\infty)\,$. Since
\begin{multline*}
\int\rho_1(\la_j-\mu)\,\dr\chi_j(\mu)\ =\ \rho_1(0)\ =\ \rho_1(-0)\
=\ -\int_0^\infty\rho(\tau)\,\dr\tau\\
=\ -\int\rho(\la_j-\mu)\,\chi_j(\mu)\,\dr\mu\ =\
\chi(\la_j)-\int\rho(\la_j-\mu)\,\chi_j(\mu)\,\dr\mu\,,
\end{multline*}
the lemma remains valid when $\la$ is an eigenvalue.
\end{proof}

\begin{remark}\label{rho1}
The above lemma turns out to be very useful for obtaining estimates
of the spectral and counting functions. Usually, the singularities
of $\,\si_{x,y}(t)\,$ for small values of $\,t\,$ can be described
explicitly. Then, taking the inverse Fourier transform, one obtains
full asymptotic expansion of $\,N_{x,y;0}(\la)\,$. The asymptotic
behaviour of $\,N_{x,y;1}(\la)\,$ is determined by nonzero
singularities of $\,\si_{x,y}(t)\,$, which are much more difficult
to study. According to Lemma~\ref{n1:conv}, the function
$\,N_{x,y;1}(\la)\,$ can also be written as a convolution. The main
technical problem is that the function $\,\rho_1\,$ has a jump, and
therefore the straightforward integration by parts does not yield
any new results. However, as we shall see in the next subsection,
this jump disappears when we square $\,N_{x,y;1}(\la)\,$ and
integrate over $\,y\,$. Note also that $\,|\rho_1|\,$ is estimated
by a smooth function whose Fourier transform has a compact support.
This observation allows one to simplify and refine the well known
Fourier Tauberian Theorems (see \cite{S2}).
\end{remark}

\subsection{Upper bounds for $N_{x,y;1}(\la)\,$}\label{proof-upper:n1}
Since the eigenfunction $\,\phi_j\,$ are orthogonal in $L^2(M)$,
Lemma~\ref{n1:conv} implies that
\begin{multline}\label{n1:proof1}
\int_M|N_{x,y;1}(\la)|^2\,\dr y\ =\
\int_M|\int\rho_1(\la-\mu)\,\dr N_{x,y}(\mu)|^2\,\dr y\\
=\ \int_M\iint\rho_1(\la-\mu)\,\rho_1(\la-\tau)\,\dr
N_{x,y}(\mu)\,\dr N_{x,y}(\tau)\,\dr y\ =\ \int
|\rho_1(\la-\mu)|^2\,\dr N_{x,x}(\mu)\,
\end{multline}
(an operator interpretation of this equality has been given in the
subsection \ref{main:generalizations}). The function
$|\rho_1(\tau)|^2$ is continuous and infinitely differentiable
outside the origin. Integrating by parts and changing variables in
the right hand side, we obtain
$$
\int_M|N_{x,y;1}(\la)|^2\,\dr y=\int|\rho_1(\la-\mu)|^2\,\dr
N_{x,x}(\mu) =\int\tilde\rho_1(\mu)\,N_{x,x}(\la-\mu)\,\dr\mu\,,
$$
where
$\,\tilde\rho_1(\tau):=\frac{\dr\hfill}{\dr\tau}\,|\rho_1(\tau)|^2$
is a rapidly decreasing odd function. Since $\,\tilde\rho_1\,$ is
odd,
$$
\int\tilde\rho_1(\mu)\,N_{x,x}(\la-\mu)\,\dr\mu
=\int_0^\infty\tilde\rho_1(\mu)\left(N_{x,x}(\la-\mu)-N_{x,x}(\la+\mu)\right)\dr\mu\,.
$$
Substituting \eqref{intro:bound1} in the right hand side and
estimating
$$
|(\la-\mu)^n-(\la+\mu)^n|\ \leq\
C\,(|\mu|\,\la^{n-1}+|\mu|^n)\,,\quad|\la\pm\mu|^{n-1}\ \leq\
C\,(\la^{n-1}+|\mu|^{n-1})\,,
$$
we see that
\begin{equation}\label{n1:proof2}
\int_M|N_{x,y;1}(\la)|^2\,\dr y\ \leq\
C\,(\la^{n-1}+1)\,,\qquad\forall\la>0\,,
\end{equation}
where the constant $C$ depends only on the dimension, the remainder
term in the Weyl formula~\eqref{intro:bound1} and the auxiliary
function $\rho$.

\subsection{Upper bounds for $N_{x,y;0}(\la)\,$}\label{proof-upper:n0}
Since
$\,\frac{\dr}{\dr\la}N_{x,y;0}(\la)=\int\rho(\la-\mu)\,\dr
N_{x,y}(\mu)\,$ and
$$
\int\rho(\la-\mu)\,\dr
N_{x,y}(\mu)=\rho*N'_{x,y}(\la)=\left(\hat\rho\,\si_{x,y}\right)^\vee(\la)\,,
$$
Condition~\ref{rho} implies that the asymptotic behaviour of
$N_{x,y;0}(\la)$ for large $\la$ is determined by the singularities
of $\si_{x,y}$ on $\,\supp\hat\rho\,$. For second order differential
operators, it is slightly more convenient to deal with the cosine
Fourier transform $\,e_{x,y}(t):=\int\cos(t\la)\,\dr
N_{x,y}(\la)\,$. The distribution $e_{x,y}$ coincides with the
fundamental solution of the wave equation. For all sufficiently
small $t$, it admits the following Hadamard representation
\begin{equation}\label{n0:hadamard}
e_{x,y}(t)\ =\ |t|\,\sum_{j=0}^\infty
u_j(x,y)\,\frac{(t^2-d_{x,y}^{\,2})_+^{j-\frac{n+1}2}}
{\Gamma(j-\frac{n+1}2+1)}\mod
C^\infty
\end{equation}
where $u_j(x,y)$ are some smooth functions (see \cite{Ba} or
\cite[Proposition 27]{Ber}). Let us assume that $\de$ in
Condition~\ref{rho} is small enough so that \eqref{n0:hadamard}
holds on the interval $(-\de,\de)$ (and, consequently, on
$\,\supp\hat\rho\,$).

Denote $\,\Psi_{x,y}(\la):=N_{x,y}(\la)- N_{x,y}(-\la)\,$. Changing
variables and taking into account that the function $\rho$ is even,
we see that
$$
\rho*\Psi_{x,y}(\la)=N_{x,y;0}(\la)-N_{x,y;0}(-\la)\,.
$$
Since $N_{x,y}(\la)=0$ for $\la<0$ and $\rho$ is a rapidly
decreasing function, $\,N_{x,y;0}(-\la)\,$ vanishes faster than any
power of $\la$ as $\la\to+\infty$. Thus we have
\begin{equation}\label{n0-Psi}
N_{x,y;0}(\la)\ =\ \rho*\Psi_{x,y}(\la)\ +\
o(\la^{-m})\,,\qquad\forall m\in\R_+\,.
\end{equation}

Integrating by parts, we obtain
\begin{equation}\label{e-Psi}
e_{x,y}(t)\ =\ t\int\sin(t\la)\, N_{x,y}(\la)\,\dr\la\ =\
-(2i)^{-1}\,t\,\hat\Psi_{x,y}(t)\,.
\end{equation}
In view of \eqref{n0:hadamard} and \eqref{e-Psi},
\begin{equation}\label{Psi}
\hat\Psi_{x,y}(t)\ =\ -2i\,\sign t\,\sum_{j=0}^\infty
u_j(x,y)\,\frac{(t^2-d_{x,y}^{\,2})_+^{j-\frac{n+1}2}}{\Gamma(j-\frac{n+1}2+1)}\mod
C^\infty
\end{equation}
on $\,\supp\hat\rho\,$. We have
\begin{multline}\label{f-transform}
\int_{|t|>d_{x,y}} e^{it\la}\,\sign t\,
\frac{(t^2-d_{x,y}^2)^{p-\frac12}}{\Gamma\left(p+\frac12\right)}\,\dr
t \ =\ 2i\int_{d_{x,y}}^\infty\sin(t\la)\,
\frac{(t^2-d_{x,y}^{\,2})^{p-\frac12}}{\Gamma\left(p+\frac12\right)}\,\dr t\\
=\ i\sqrt\pi\,\left(\frac{2\,d_{x,y}}{\la}\right)^p
J_{-p}(d_{x,y}\la)\,,
\end{multline}
(see, for example, \cite[formula 3.771(7)]{GR} or \cite[Chapter II,
Section 2.5]{GS}). Therefore, taking the inverse Fourier transform
of each term in the right-hand side of \eqref{Psi}, we obtain an
asymptotic series
\begin{equation}\label{asymp}
\sum_{j=0}^\infty w_j(x,y)\,d_{x,y}^{\,j-\frac{n}2}\,
\la^{\frac{n}2-j}\, J_{\frac{n}2-j}\left(d_{x,y}\,\la\right)
\end{equation}
with some bounded functions $w_j(x,y)$. Let $\Psi_{0;x,y}(\la)$ be
an arbitrary function such that $\,\Psi_{0;x,y}(\la)\sim
\sum_{j=0}^\infty w_j(x,y)\,d_{x,y}^{\,j-\frac{n}2}\,
\la^{\frac{n}2-j}\, J_{\frac{n}2-j}\left(d_{x,y}\,\la\right)\,$ as
$\,\la\to+\infty\,$ uniformly with respect to $x,y\in M$. By the
above,
\begin{equation}\label{Psi-Psi0}
\rho*\Psi_{x,y}(\la)-\rho*\Psi_{0;x,y}(\la)\ =\ O(\la^{-\infty})\,.
\end{equation}

Recall that the Bessel function $\,J_p(\tau)\,$ is estimated by
$\,C\,\tau^p\,$ in a neighbourhood of the origin and does not exceed
$\,C\,\tau^{-\frac12}\,$ for large $\tau$ (see \cite[8.440 and
8.451(1)]{GR}). Therefore
\begin{equation}\label{bessel}
|d_{x,y}^{\,-p}\, \la^{\frac12}\, J_p\left(d_{x,y}\,\la\right)|\
\leq\
\begin{cases}
C\,\la^{p+\frac12}\,,&\la\leq d_{x,y}^{\,-1}\,,\\
C\,d_{x,y}^{\,-p-\frac{1}2}\,,&\la> d_{x,y}^{\,-1}\,,
\end{cases}
\qquad\forall p\in\R\,.
\end{equation}
The inequalities \eqref{bessel} imply that the terms in the
expansion \eqref{asymp} are bounded by
$\,C_j\,d_{x,y}^{\,j-\frac{n+1}2}\,\la^{\frac{n-1}2-j}$ for $\la\geq
d_{x,y}^{\,-1}\,$ and are estimated by $\,C_j\,\la^{n-2j}$ for
$\la\in(\eps,d_{x,y}^{\,-1})\,$, where $\eps$ is an arbitrary
positive constant. Thus, if $\la>\eps$ then
$$
\left|w_j(x,y)\,d_{x,y}^{\,j-\frac{n}2}\, \la^{\frac{n}2-j}\,
J_{\frac{n}2-j}\left(d_{x,y}\,\la\right)\right|\ \leq\
C_j\left(d_{x,y}^{\,j-\frac{n+1}2}+1\right)\la^{\frac{n-1}2-j}
$$
for all $j=0,1,2,\ldots$ and, consequently,
\begin{equation}\label{Psi0-2}
\left|\Psi_{0;x,y}(\la)-w_0(x,y)\,d_{x,y}^{\,-\frac{n}2}\,
\la^{\frac{n}2}\, J_{\frac{n}2}\left(d_{x,y}\,\la\right)\right|\
\leq\ C\left(d_{x,y}^{\,\frac{1-n}2}+1\right)\la^{\frac{n-3}2}\,.
\end{equation}

By direct calculation, $\,w_0(x,y)= 2^{-\frac{n}{2}}
\pi^{-\frac{1}{2}} u_0(x,y)\,$. We have
$$
u_0(x,y)\ =\
\pi^{\frac{1-n}{2}}\left(\det\{g_{jk}(x,y)\}\right)^{-1/4}\,,
$$
where $\{g_{jk}(x,y)\}$ is the metric tensor in geodesic normal
coordinates with origin $x$ (\cite[formula (3.1.1)]{JP}), so that
$w_0(x,y) = (2\pi)^{-\frac{n}{2}} + O(d_{x,y}^{\,2})$ (see \cite[p.
101]{Ros}). From this estimate and \eqref{Psi0-2} it follows that
$$
\int_M\left|\Psi_{0;x,y}(\la)-(2\pi)^{-\frac{n}{2}}\,d_{x,y}^{\,-\frac{n}2}\,
\la^{\frac{n}2}\,
J_{\frac{n}2}\left(d_{x,y}\,\la\right)\right|^2\,\dr y\ \leq\
C_\eps\,\la^{n-3}\int_M\left(d_{x,y}^{\,1-n}+1\right)\dr y
$$
for all $\la>\eps$, where $\eps$ is an arbitrary positive number and
$C_\eps$ is a constant depending only on $\eps$, the auxiliary
function $\rho$ and the geometry of $M$. Finally, since the function
$\rho$ is rapidly decreasing, the above inequality, \eqref{n0-Psi}
and \eqref{Psi-Psi0} imply that
\begin{multline}\label{n0}
\int_M\left|N_{x,y;0}(\la)-(2\pi)^{-\frac{n}{2}}\,d_{x,y}^{\,-\frac{n}2}\,
\la^{\frac{n}2}\,
J_{\frac{n}2}\left(d_{x,y}\,\la\right)\right|^2\,\dr y\\
\leq\ C_\eps\,\la^{n-3}\int_M\left(d_{x,y}^{\,1-n}+1\right)\dr
y\,,\qquad\forall\la\geq\eps>0\,,
\end{multline}
where $C_\eps$ is another constant depending only on $\eps$, $\rho$
and the geometry of $M$.

\begin{remark}\label{n0:remark1}
The above proof is a slight modification of arguments in
\cite{Ba}.
\end{remark}

\subsection{Proof of Theorem~\ref{upper:theorem}} The theorem
follows from \eqref{n0}, \eqref{n1:proof2} and \eqref{specrn}.

\subsection{Proof of Theorem~\ref{upper:cor3}}\label{proof-upper:cor3}
By \eqref{bessel}, there exists a constant $C_1$ such that
$$
\left|d_{x,y}^{\,-\frac{n}2}\, \mu^{\frac12}\,
J_{\frac{n}2}\left(d_{x,y}\,\mu\right)\right|\ \leq\
C_1\,d_{x,y}^{\,-\frac{n+1}2}.
$$
If $\,C\,> \sqrt{2}C_1$ then the above inequality and
\eqref{upper:int0} imply that
\begin{multline*}
2\,C_M\ \geq\ 2\int_{\Omega_x(\la,\mu)}\left|\tilde
N_{x,y}(\la)-d_{x,y}^{\,-\frac{n}2}\, \mu^{\frac12}\,
J_{\frac{n}2}\left(d_{x,y}\,\mu\right)\right|^2\,\dr y\\
\geq\ \int_{\Omega_x(\la,\mu)}\left(|\tilde
N_{x,y}(\la)|^2-2\,C_1^2\,d_{x,y}^{\,-n-1}\right)\,\dr y\\ \geq\
\int_{\Omega_x(\la,\mu)}\left(C^2\,\mu^2+
(C^2-2\,C_1^2)\,d_{x,y}^{\,-n-1}\right)\,\dr y\ \geq\
C^2\,\mu^2\int_{\Omega_x(\la,\mu)}\,\dr y\,.
\end{multline*}

\subsection{Proof of Corollary~\ref{upper:cor4}}\label{proof-upper:cor4}
If $\,y\ne x\,$ then the inequalities \eqref{upper:seq1} imply that
$\,y\in\bigcap_{k\geq N}\Omega_x(\tau_k,(2C)^{-1}\mu_k)\,$ for all
sufficiently large $\,N\,$. By Theorem~\ref{upper:cor3}, the measure
of this intersection is zero.

If $\,\sum_k\mu_k^{-2}<\infty\,$ then, according to
Theorem~\ref{upper:cor3}, the sum of measures of the sets
$\,\Omega_x(\tau_k,t\mu_k)\,$ is finite for all $t>0$. By the
Borel--Cantelli lemma, in this case almost every point $y\in M$
belongs only to finitely many sets $\,\Omega_x(\tau_k,t\mu_k)\,$.
This implies that $\,\limsup_{k\to\infty}\left(\mu_k^{-1}\,\tilde
N_{x,y}(\tau_k)\right)<t\,$ almost everywhere. Letting $\,t\to0\,$,
we obtain \eqref{upper:seq2}.

\subsection{Proof of Theorem~\ref{upper:cor0}}\label{proof-upper:cor0}
Let $\,d_M:=\diam M\,$. We have
\begin{multline*}
\int_M d_{x,y}^{\,\vk} \left|d_{x,y}^{\,-\frac{n}2}\,
\la^{\frac12}\, J_{\frac{n}2}\left(d_{x,y}\,\la\right)\right|^2\dr
y\ \leq\ C\,\la\int_M d_{x,y}^{\,\vk-n}\,
J_{\frac{n}2}^2\left(d_{x,y}\,\la\right)\dr y\\
\ \leq\ C\,\la\int_0^{d_M}r^{\,\vk-1}\, J_{\frac{n}2}^2(r\,\la)\,\dr
r\ =\ C\,\la^{1-\vk}\int_0^{\la\,d_M}r^{\,\vk-1}\,
J_{\frac{n}2}^2(r)\,\dr r\,.
\end{multline*}
Even if spherical coordinates centred at $x$ do not exist globally
on $M$, it is still possible to use the pull-back of the metric
under the exponential map to change the integration variable to $r$.
Since the Bessel function $\,J_{\frac{n}2}\,$ is bounded by
$\,C\,r^{\frac{n}2}\,$ in a neighbourhood of the origin and does not
exceed $\,C\,r^{-\frac12}\,$ for large $\,r\,$, the right hand side
is estimated by
$\,C\,\la^{1-\vk}\left((n+\vk)^{-1}+\int_1^{\la\,d_M}r^{\,\vk-2}\,\dr
r\right)\,$ if $\la\,d_M> 1$. Thus we obtain
$$
\int_M d_{x,y}^{\,\vk} \left|w_0(x,y)\,d_{x,y}^{\,-\frac{n}2}\,
\la^{\frac12}\, J_{\frac{n}2}\left(d_{x,y}\,\la\right)\right|^2\dr
y\ \leq\
\begin{cases}
C_{\vk}\,(\la^{1-\vk}+d_M^{\vk-1})\,,&\vk\ne1\,,\\
C\,|\ln\la|\,,&\vk=1\,.
\end{cases}
$$
with $C_{\vk} = C |\vk-1|^{-1}$. This estimate together with
\eqref{upper:int0} (multiplied by $d_M^{\vk}$ on both sides) imply
\eqref{upper:int1} by triangle inequality.

\subsection{Proof of Corollary~\ref{upper:cor2}}\label{proof-upper:cor2}
Let $g_\vk(\la):=C_{\vk,\,M}\, (1+\la^{1-\vk})$ if $\vk\ne1$ and
$g_1(\la):=C_M(1+|\ln\la|)$. By \eqref{upper:int1},
$$
|\int_M{\mathcal K}_\la(x,y)\,u(y)\,\dr y\,|\ \leq\
g_\vk(\la)\,\|u\|_{L^\infty}
$$
for all $\,u\in L^\infty(M)\,$ and
\begin{multline*}
\int_M \left|\int_M{\mathcal K}_\la(x,y)\,v(y)\,\dr y\,\right|\,\dr
x\leq\int_M\left(\int_M{\mathcal K}_\la(x,y)\,\dr
x\right)|v(y)|\,\dr y\leq g_\vk(\la)\,\|v\|_{L^1}
\end{multline*}
for all $v\in L^1(M)\,$. These estimates imply the required result
for $p=\infty$ and $p=1$ respectively. For other values of $p$ the
result follows from the Riesz interpolation theorem (see
\cite[Theorem 0.1.13]{Sogge}).

\subsection{Proof of Theorem~\ref{upper:cor1}}\label{proof-upper:cor1}
If $\vk>1$ then, in view of \eqref{upper:int1},
$$
\int_0^\infty\int_M d_{x,y}^{\,\vk}\,|\tilde N_{x,y}(\la)|^2\,\dr
y\,\dr\nu(\la)\ <\ \infty\,.
$$
Therefore, by Fubini's theorem, the integral $\,\int_0^\infty|\tilde
N_{x,y}(\la)|^2\,\dr\nu(\la)\,$ is finite for almost all $y\in M$.

\section{Proofs of the lower bounds}\label{proof-lower}
\subsection{Proof of Theorem~\ref{lower:theorem}}\label{proof-lower:theorem}
Let the points $x,y \in M$ be not conjugate along any shortest
geodesic segment joining them. Then, as follows from \cite[Theorem
16.2]{Mil}, the number of shortest geodesic segments joining $x$ and
$y$ is finite, and there exists $\eps>0$ such that
$(d_{x,y}-\eps,d_{x,y}+\eps) \cap {\mathcal L}_{x,y}=d_{x,y}$. Set
$\psi(\la)=\exp(i\la\,d_{x,y})\,\rho(\la)$, where $\rho$ satisfies
Condition \ref{rho}. Then $\hat\psi(d_{x,y})=1$, and one could
choose $\rho$ in such a way that ${\rm supp}\,\, \hat\psi \subset
(d_{x,y}-\eps, d_{x,y}+\eps)$. By \cite[Theorem 4.2]{LSV} (see also
\cite[Proposition 3.3.6]{JP}) we have:
\begin{equation}\label{lower:proof1}
\left(\hat\psi\,\si_{x,y}\right)^\vee(\mu)\ =\
\int\psi(\mu-\tau)\,\dr N_{x,y}(\tau)\ =\ C
\,e^{-i\,(d_{x,y}\mu+\phi)}\,\mu^{\frac{n-1}2}+O\left(\mu^{\frac{n-3}2}\right),
\end{equation}
where $\phi$ is the phase shift depending only on dimension of $M$
(note that the Morse index of any shortest geodesic is zero). Since
$$
\int|\psi'(\mu-\tau)|\,|N_{x,y}(\tau)|\,\dr\tau\ \geq\
|\int\psi'(\mu-\tau)\,N_{x,y}(\tau)\,\dr\tau|\ =
|\int\psi(\mu-\tau)\,\dr N_{x,y}(\tau)|\,,
$$
\eqref{lower:proof1} implies that
$$
\int\mu^q\,|\psi'(\mu-\tau)|\,|N_{x,y}(\tau)|\,\dr\tau\ \geq\ C
\,\mu^{\frac{n-1}2+q}+O\left(\mu^{\frac{n-3}2+q}\right)\,,\qquad\forall
q\ge0\,.
$$

The rest of the proof is similar to that of \cite[Lemma 5.1]{Sar}
Integrating the above inequality and taking into account the
estimate
\begin{equation}\label{lower:proof2}
\int_0^{\frac\la2}\,\mu^{\frac{n-1}2+q}\,\dr\mu\ \gg\
\la^{\frac{n+1}2+q}\,,
\end{equation}
we obtain
\begin{equation}\label{lower:proof3}
\int\tilde\psi(\la,\tau)\,|N_{x,y}(\tau)|\,\dr\tau\ \geq\
C\,\la^{\frac{n+1}2+q}+O\left(\la^{\frac{n-1}2+q}\right),
\end{equation}
where
$\tilde\psi(\la,\tau):=\int_0^{\frac\la2}\mu^q\,|\psi'(\mu-\tau)|\dr\mu\,$.
Since $\psi'$ is a rapidly decreasing function, we have
$$
\tilde\psi(\la,\tau)\ \leq\
C\int_0^{\frac\la2}\left(\tau^q+|\mu-\tau|^q\right)|\psi'(\mu-\tau)|\dr\mu\
\leq\ C\,(\tau^q+1)\,,\qquad\forall\tau\in\R_+\,,
$$
and $\,\tilde\psi(\la,\tau)\ \leq\
C_j\left(\la\,|\la-\tau|\right)^{-j}\,$ for all $\,\tau\geq\la\,$
and $\,j=1,2,\ldots$ These estimates imply, respectively, that
\begin{equation}\label{lower:proof4}
\int_0^\la\tilde\psi(\la,\tau)\,|N_{x,y}(\tau)|\,\dr\tau\leq\
C\int_0^\la\,\tau^q\,|N_{x,y}(\tau)|\,\dr\tau
\end{equation}
and
\begin{equation}\label{lower:proof5}
\int_\la^\infty\tilde\psi(\la,\tau)\,|N_{x,y}(\tau)|\,\dr\tau\ \leq\
C\int_\la^\infty\tilde\psi(\la,\tau)\,\tau^n\,\dr\tau\ =\ O(1)\,.
\end{equation}
Putting together \eqref{lower:proof3}--\eqref{lower:proof5}, we see
that
$$
\la^{-q-1}\int_0^\la\,\tau^q\,|\tilde N_{x,y}(\tau)|\,\dr\tau\ \gg\
\la^{-\frac{n+1}2-q}\int_0^\la\,\tau^q\,|N_{x,y}(\tau)|\,\dr\tau\
\gg\ 1\,.
$$
Now \eqref{lower1} with $p>1$ follows from Jensen's inequality. \qed

\begin{remark}\label{lower:remark2}
The inequality \eqref{lower:proof1} holds for $N_{x,x}(\la)$ under
the assumption that $x$ is not conjugate to itself along any
shortest geodesic loop. Therefore, similar lower bounds can be
proved for the oscillatory error term $R^{osc}_x(\la)$ introduced in
\cite[section 1.2]{JP}. Note that in dimension two this term
coincides with the usual pointwise remainder in Weyl's law.

\end{remark}
\subsection{Proof of Corollary~\ref{lower:cor4}}\label{proof-lower:cor4}
Condition \eqref{lower2} implies that there is a sequence of points
$\,\mu_k\to+\infty\,$ and a positive constant $\,C\,$ such that
$$
\mu_k^{q+1}\mu^{-q}f(\mu)\ \geq\ C\,,\qquad\forall\mu\leq\mu_k\,.
$$
Let $\chi_k(\mu)\,$ be the characteristic functions of the intervals
$(0,\mu_k]\,$, $k=1,2,\ldots$ By the above, the functions
$\,g_k(\mu):=\mu_k^{-q-1}\chi_k(\mu)\,\mu^q\left(f(\mu)\right)^{-1}\,$
are uniformly bounded by a constant. Obviously, $\,g_k(\mu)\to0\,$
as $k\to\infty$ for each fixed $\mu$. If the integral $\,\int
f(\mu)\,|N_{x,y}(\mu)|^p\,\dr\mu\,$ were finite then, by the
Lebesgue dominated convergence theorem, we would have
$$
\mu_k^{-q-1}\int_0^{\mu_k}\mu^q\,|N_{x,y}(\mu)|^p\,\dr\mu =\int
g_k(\mu)\,f(\mu)\,|N_{x,y}(\mu)|^p\,\dr\mu\ \to\ 0\,,\qquad
k\to\infty\,.
$$
However, this contradicts to \eqref{lower1}.

The estimates \eqref{lower3} and \eqref{lower4} are obtained by
taking $f(\mu):=\mu^{-1}$ and
$f(\mu):=\mu_k^{-1}\left(\chi_k(\mu)-\chi_{k-1}(\mu)\right)$
respectively.

\section{Examples}\label{examples}
\subsection{Circle and $2$-tori}\label{examples:tori}
In this subsection we justify Examples~\ref{example:circle} and
\ref{besic:torus}. Let $n=1,2$ and let $\Gamma$ be a lattice in
$\R^n$. Consider the flat torus $\mathbb T^n = \R^n / \Gamma$. If
$n=1$ one can assume that $\Gamma=2\pi \mathbb{Z}$, and thus
$\mathbb{T}^1 \cong \mathbb{S}^1$. We have
$$
N_{x,y}(\la)\ =\ \frac{1}{\Vol(\Gamma)} \sum_{\xi\in \Gamma^{*}}
e^{2\pi i\langle x-y,\xi\rangle}\,\chi_{B^n}(2 \pi \xi/\la)\,
$$
where $\,\chi_{B^n}\,$ is the characteristic function of the unit
ball, $\Vol(\Gamma)$ is the volume of the torus and
$\Gamma^{*}=\{\xi \in
\mathbb{R}^n\,:\,(\xi,\eta)\in\Z\,,\,\forall\eta \in \Gamma\}$ is
the dual lattice (see \cite[p. 29]{Chavel2}. We are slightly abusing
notation here, since we added to the spectral function the constant
term corresponding to $\xi = 0$. Note that $\,N_{x,y}=N_{x-y,0}\,$,
hence we can assume without loss of generality that $\,y=0\,$.

Let $w(\xi)\in C_0^\infty(\R^n)$ be a nonnegative function depending
only on $|\xi|$, such that $\int w(\xi)\,\dr\xi=1\,$. Denote
$\,w_\eps(\xi):=\eps^{-n}\,w(\eps^{-1}\xi)\,$ for $\eps > 0$. By the
Poisson summation formula,
\begin{multline*}
N_{x,0}^{(\eps)}(\la)\ :=\ \frac{1}{\Vol(\Gamma)} \sum_{\xi\in
\Gamma^{*}}
e^{2\pi i \langle x,\xi\rangle}(w_\eps*\chi_{B^n})(2 \pi \xi/\la)\\
= \sum_{\eta\in \Gamma} \int_{\R^n} e^{2 \pi i\langle x + \eta,\xi\rangle}(w_\eps*\chi_{B^n})(2  \pi \xi/\la)\,\dr\xi\\
=\frac{\la^n}{(2\pi)^n} \sum_{\eta\in \Gamma}\hat{w}(\eps\la
(x+\eta))\,\hat{\chi}_{B^n}(\la(x+\eta))\,.
\end{multline*}
In what follows we set  $\eps=(\la T)^{-1}$. Applying the asymptotic
formula \eqref{asympchar} for $\hat{\chi}_{B^n}$,  one gets
\begin{multline*}
\frac{\la^n}{(2\pi)^n}\,\hat{\chi}_{B^n}(\la(x+\eta)) =\
\frac{2\,\la^{(n-1)/2}\,\sin \left(\la\,|x+\eta| -
\frac{(n-1)\pi}{4}\right)}{(2\pi)^{(n+1)/2}\,|x+\eta|^{(n+1)/2}}\ +\
O\left(\frac{\la^{(n-3)/2}}{|x+\eta|^{(n+3)/2}} \right).
\end{multline*}
Using the formulae above, let us show that $\,\tilde N_{x,0}(\la)\in
B^2\,$ and
\begin{equation}
\label{form61}
 \tilde N_{x,0}(\la)\ \sim\ \sum_{\eta \in \Gamma}
\frac{2\,\sin\left(\la\,|x+\eta|-\frac{(n-1)\pi}{4}\right)}
{(2\pi)^{(n+1)/2}\,|x+\eta|^{(n+1)/2}}\,.
\end{equation}
The proof of \eqref{form61} follows closely \cite[section 3]{Bl2}
and is split into four lemmas.

\begin{lemma}
\label{lem611}
$$
\int_1^T\left| \sum_{\xi\in \Gamma^{*}}e^{2 \pi i \langle
x,\xi\rangle} \left(\chi_{B^n}(2 \pi \xi/\la)-w_{(\la
T)^{-1}}*\chi_{B^n}( 2 \pi \xi/\la)\right) \right|^2
\frac{\dr\la}{\la^{n-1}}\ =\ O(1).
$$
\end{lemma}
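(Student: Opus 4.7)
My plan is to apply Poisson summation to rewrite the difference as a sum over the direct lattice $\Gamma$, then exploit the special scaling $\eps = (\la T)^{-1}$ to decouple the smoothing cutoff from $\la$, and finally estimate the resulting $L^2$ integral by separating diagonal and off-diagonal contributions.

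First, by Poisson summation (justified via limits from the smooth $w_\eps * \chi_{B^n}$) the quantity inside the absolute value equals
$$D(\la)\ :=\ \frac{\la^n}{(2\pi)^n}\sum_{\eta \in \Gamma}\Phi_T(x+\eta)\,\hat{\chi}_{B^n}(\la(x+\eta)),$$
where $\Phi_T(z) := 1 - \hat w(z/T)$; the key point is that $\eps\la(x+\eta) = (x+\eta)/T$ is independent of $\la$, so the cutoff factors out of the $\la$-integration. Since $\hat w$ is Schwartz with $\hat w(0)=1$, we have $|\Phi_T(z)| \leq C\min(|z|/T,\,1)$, which kills the near-origin lattice points where the raw Poisson sum fails to converge absolutely. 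Substituting the asymptotic expansion \eqref{asympchar} for $\hat{\chi}_{B^n}$ and writing $r_\eta := |x+\eta|$, the dominant part of $D(\la)$ becomes
$$D_0(\la)\ =\ c_n\,\la^{(n-1)/2}\sum_{\eta \in \Gamma}\Phi_T(x+\eta)\,\frac{\sin(\la r_\eta - \phi_n)}{r_\eta^{(n+1)/2}},$$
while the remainder $O((\la r_\eta)^{-(n+3)/2})$ gives an absolutely convergent, and easily $O(1)$, contribution to $\int_1^T|D|^2\la^{-(n-1)}\dr\la$.

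Expanding $|D_0|^2$ via $2\sin\alpha\sin\beta = \cos(\alpha-\beta)-\cos(\alpha+\beta)$ and integrating $\la$ from $1$ to $T$ against $\la^{-(n-1)}\dr\la$ splits the problem into a diagonal part ($\eta=\eta'$) and an off-diagonal part. For the diagonal the $\la$-integral is $T/2 + O(1)$, producing the bound
$$T\sum_{\eta\in\Gamma}\frac{|\Phi_T(x+\eta)|^2}{r_\eta^{n+1}}\ \leq\ C\,T^{-1}\!\!\sum_{r_\eta\leq T}r_\eta^{\,1-n}\ +\ C\,T\!\!\sum_{r_\eta>T}r_\eta^{-(n+1)},$$
which for $n\in\{1,2\}$ is $O(1)$ by routine comparison with integrals over $\R^n$. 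The off-diagonal $\la$-integrals yield factors $O(|r_\eta\pm r_{\eta'}|^{-1})$, reducing the task to controlling
$$\sum_{\eta\ne\eta'}\frac{|\Phi_T(x+\eta)\Phi_T(x+\eta')|}{r_\eta^{(n+1)/2}r_{\eta'}^{(n+1)/2}\,|r_\eta-r_{\eta'}|},$$
which I would handle by dyadic decomposition of $(r_\eta,r_{\eta'})$ together with a counting estimate for pairs of lattice vectors whose lengths are close.

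The main obstacle is precisely this off-diagonal bound: in dimension two, lengths $r_\eta$ and $r_{\eta'}$ can be arbitrarily close, so absolute summability is delicate, and both the weight $\Phi_T$ and the factor $\la^{-(n-1)}$ in the original integrand are essential for closing the estimate. The resolution, following the lattice-point arguments in \cite[Section 3]{Bl2}, is a pigeonhole-type pairing of shells of comparable radii together with the near-boundary localization provided by $\Phi_T$; this is also the step where the restriction $n\leq 2$ enters decisively.
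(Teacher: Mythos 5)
Your proposal takes a genuinely different route from the paper. The paper never applies Poisson summation in the proof of Lemma~\ref{lem611}: it works entirely on the dual-lattice side. The key observation there is that the summand
$$f_\xi(\la) = \chi_{\la B^n}(2\pi\xi) - w_{T^{-1}}*\chi_{\la B^n}(2\pi\xi)$$
is uniformly bounded and supported in a thin $\la$-interval of width $O(T^{-1})$ centred at $\la = 2\pi|\xi|$. Expanding the square, the cross-term $\int f_\xi f_\beta\,\la^{1-n}\dr\la$ then vanishes unless the two intervals overlap, i.e.\ unless $\bigl||\xi|-|\beta|\bigr| = O(T^{-1})$. The number of such $\beta$ is controlled by Colin de Verdi\`ere's lattice-point count in an annulus, and the $O(1)$ bound follows by a short summation, with $n\leq 2$ entering through the convergence of the resulting series. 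That argument is self-contained and does not require any off-diagonal oscillatory-sum analysis.

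By contrast, you first pass via Poisson summation to the direct lattice, obtaining an oscillating sum weighted by $\Phi_T(x+\eta)=1-\hat w((x+\eta)/T)$, and then face a nondiagonal sum of the form
$$\sum_{\eta\ne\eta'}\frac{|\Phi_T(x+\eta)\Phi_T(x+\eta')|\,\min\bigl(T,\,|r_\eta-r_{\eta'}|^{-1}\bigr)}{r_\eta^{(n+1)/2}r_{\eta'}^{(n+1)/2}}\,.$$
This is precisely the delicate step, and you correctly flag it as the main obstacle, but then leave it as a sketch (``dyadic decomposition together with a counting estimate''). This is not a minor detail: that off-diagonal estimate is essentially the content of Lemma~\ref{lem613} in the paper, where a similar double sum is analysed and bounded only by $o(T)$ (after a careful three-range split); in your version you need the stronger $O(1)$ bound without the normalisation by $T$ or the cutoff $|\eta|>Q$. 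The weight $\Phi_T$ only helps for $|\eta|\lesssim T$ and provides no decay for $|\eta|\gg T$, so it is not clear that the required $O(1)$ bound actually holds along these lines. There is also a smaller technical issue: the Poisson-sum identity $N_{x,0}(\la)=\frac{\la^n}{(2\pi)^n}\sum_\eta\hat\chi_{B^n}(\la(x+\eta))$ for the \emph{un}smoothed cutoff is only conditionally convergent in dimensions $1$ and $2$, and subtracting the $\Phi_T$-weighted sum does not improve the decay at infinity, so one must argue more carefully than by termwise subtraction.

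In short, the paper's proof is shorter and structurally cleaner because it avoids Poisson summation entirely for this lemma (that step is deliberately postponed to the subsequent lemmas), whereas your approach front-loads the harder oscillatory off-diagonal estimate and leaves it unproved. Until that off-diagonal bound is actually established, your argument has a genuine gap.
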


\begin{proof}
Set
$$
f_{\xi}(\la) =\chi_{B^n}(2 \pi \xi/\la)-w_{(\la T)^{-1}}*\chi_{B^n}(
2 \pi \xi/\la) = \chi_{\la B^n}(2\pi\xi) - w_{T^{-1}}*\chi_{\la
B^n}( 2 \pi \xi),
$$
where $\la B^n$ is a ball of radius $\la$. Since by definition
$\,\int w_{T^{-1}}(\xi)d\xi = 1\,$ and $w_{T^{-1}} \in
C_0^{\infty}$, it follows that $f_\xi(\la)$ is supported on an
interval of size $O(T^{-1})$ centred at $\la = 2\pi |\xi|$. By a
result of \cite{Colin}, we have
$$
\# \{ \xi \in \Gamma^{*}, |\xi| < R  \}\ =\ \frac{C_n
R^n}{\Vol(\Gamma)} + O(R^{n-2+\frac{2}{n+1}})\,,\qquad\forall
n=1,2,\ldots
$$
Therefore, counting lattice points in an annulus of exterior radius
$|\xi|+O(T^{-1})$ and interior radius $|\xi| -O(T^{-1})$ we get
\begin{equation}
\label{crossterms} \# \{ \beta \in \Gamma^{*}, \supp f_{\beta}(\la)
\cap \supp f_{\xi}(\la) \neq \varnothing \} = O(T^{-1} |\xi|^{n-1} +
|\xi|^{n-2+\frac{2}{n+1}} ).
\end{equation}
This gives us an estimate on the number of non-vanishing cross-terms
in the expression under the integral in Lemma \ref{lem611}. Since
all the terms in this expression are bounded by a constant, taking
into account \eqref{crossterms} and the size of the support of
$f_\xi(\la)$, we obtain
\begin{multline*}
\int_1^T\left| \sum_{\xi\in \Gamma^{*}}e^{2 \pi i \langle
x,\xi\rangle} \left(\chi_{B^n}(2 \pi \xi/\la)-w_{(\la
T)^{-1}}*\chi_{B^n}( 2 \pi \xi/\la)\right) \right|^2
\frac{\dr\la}{\la^{n-1}}\\
\leq\ C T^{-1} \sum_{2 \pi |\xi| \leq T} (T^{-1} +
|\xi|^{-3+2/(n+1)})\ =\ O(1)\,.
\end{multline*}
Note that it is essential for the final equality  that $n < 3$.
\end{proof}

\begin{lemma}
\label{lem612}
\begin{equation*}
\int_1^T\left|\sum_{\eta\in
\Gamma}\frac{|\hat{w}(T^{-1}\,(x+\eta))|}
{\la\,|x+\eta|^{(n+3)/2}}\right|^2 \dr\la\ =\ O(1)\,.
\end{equation*}
\end{lemma}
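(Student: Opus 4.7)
The key observation is that the inner sum, despite appearing inside the $\lambda$-integral, does not depend on $\lambda$ at all: the only $\lambda$-dependence in the integrand is the explicit factor $1/\lambda$ that can be pulled out of the sum. So the plan is to reduce the statement to a uniform (in $T$) bound on a lattice sum.

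More precisely, I would first factor out $\lambda^{-2}$ from the square and compute
\[
\int_1^T \frac{\dr\la}{\la^2} \;=\; 1-\frac1T \;\leq\; 1.
\]
This leaves the task of showing that
\[
S_T(x) \;:=\; \sum_{\eta \in \Gamma} \frac{|\hat w(T^{-1}(x+\eta))|}{|x+\eta|^{(n+3)/2}}
\]
is bounded uniformly in $T \geq 1$. Since $x \notin \Gamma$ (we reduced earlier to the case $y = 0$, and the spectral function is considered for $x \ne y$ in $\TT^n$), we have $|x+\eta| \geq c > 0$ for all $\eta \in \Gamma$, so no term of the sum is singular.

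To bound $S_T(x)$, I would simply use $|\hat w| \leq \|\hat w\|_\infty < \infty$ (which holds because $w \in C_0^\infty$), giving
\[
S_T(x) \;\leq\; \|\hat w\|_\infty \sum_{\eta \in \Gamma} \frac{1}{|x+\eta|^{(n+3)/2}}.
\]
The remaining lattice sum converges by comparison with the integral $\int_1^\infty r^{-(n+3)/2}\, r^{n-1}\,\dr r = \int_1^\infty r^{-(5-n)/2}\,\dr r$, which is finite precisely when $(5-n)/2 > 1$, i.e.\ when $n < 3$. Since we are in dimension $n = 1$ or $n = 2$, convergence holds. Combining these bounds yields $\int_1^T |\cdots|^2 \dr\la \leq \|\hat w\|_\infty^2 \bigl(\sum_\eta |x+\eta|^{-(n+3)/2}\bigr)^2 = O(1)$.

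There is no real obstacle here: the decay exponent $(n+3)/2$ was designed (it comes from the remainder term in the asymptotic expansion \eqref{asympchar} for $\hat\chi_{B^n}$) to be summable over the lattice in dimensions $n \leq 2$. The rapid decay of $\hat w$ is not actually needed for this particular lemma; the sup-norm bound suffices. The rapid decay of $\hat w$ plays its role elsewhere in the argument, for controlling the terms with $|x+\eta|$ large relative to $T$.
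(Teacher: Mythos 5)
Your proof is correct and follows essentially the same route as the paper: pull out the $\lambda^{-2}$ factor, bound $\int_1^T \lambda^{-2}\,\dr\lambda$ by $1$, bound $|\hat w|$ by a constant, and observe that the lattice sum $\sum_\eta |x+\eta|^{-(n+3)/2}$ converges when $n < 3$. The only cosmetic difference is that the paper uses the normalization $\int w = 1$ and nonnegativity of $w$ to get the sharper bound $|\hat w| \leq 1$, whereas you use $\|\hat w\|_\infty < \infty$; both suffice.
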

\begin{proof}
Indeed,
\begin{multline*}
\int_1^T\left|\sum_{\eta\in
\Gamma}\frac{|\hat{w}(T^{-1}\,(x+\eta))|}
{\la\,|x+\eta|^{(n+3)/2}}\right|^2 \dr\la\ \leq \left(\sum_{\eta\in
\Gamma}\frac{|\hat{w}(T^{-1}(x+\eta))|}
{|x+\eta|^{(n+3)/2}}\right)^2\\ \leq \left(\sum_{\eta\in
\Gamma}\frac{1} {|x+\eta|^{(n+3)/2}}\right)^2  =\ O(1).
\end{multline*}
Here the first inequality holds since $\int_{1}^{T} \la^{-2} d\la
\leq 1$, the second inequality is true because $|\hat{w}(x)|=|\int
e^{-i \langle x,\xi\rangle} w(\xi)\,\dr\xi| \leq \int w(\xi)\,\dr\xi
= 1 \,$, and in the last step we use that  $n < 3$.
\end{proof}

\smallskip

In the lemmas below  all the summations are also taken over elements
$\eta \in \Gamma$.

\begin{lemma}
\label{lem613}
$$
\lim_{Q \rightarrow +\infty} \lim_{T \rightarrow +\infty}
\frac{1}{T} \int_1^T\left|\sum_{|\eta| >
Q}\frac{\hat{w}(T^{-1}(x+\eta))} {|x+\eta|^{(n+1)/2}}\;\sin
\left(\la\,|x+\eta|-\frac{(n-1)\pi}{4} \right)\right|^2\dr\la\ = 0
$$
\end{lemma}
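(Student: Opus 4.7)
Set $c_\eta := |x+\eta|$, $\phi_n := (n-1)\pi/4$, and $\alpha_\eta(T) := \hat w(T^{-1}(x+\eta))\,c_\eta^{-(n+1)/2}$. The strategy is to expand the square in the integrand and apply the product-to-sum identity $2\sin A\sin B = \cos(A-B) - \cos(A+B)$, reducing the estimate to explicit cosine integrals that split into diagonal and off-diagonal contributions. Direct integration yields
\[
\frac{1}{T}\int_1^T\Bigl|\sum_{|\eta|>Q}\alpha_\eta(T)\sin(\la c_\eta - \phi_n)\Bigr|^2\dr\la \;=\; \sum_{|\eta|,|\eta'|>Q}\alpha_\eta(T)\,\alpha_{\eta'}(T)\, J_{\eta,\eta'}(T),
\]
with $J_{\eta,\eta}(T) = \tfrac12 + O((Tc_\eta)^{-1})$ and, for $\eta\ne\eta'$ with $c_\eta\ne c_{\eta'}$,
\[
|J_{\eta,\eta'}(T)| \;\leq\; \frac{C}{T\,|c_\eta - c_{\eta'}|} \,+\, \frac{C}{T\,(c_\eta + c_{\eta'})}.
\]
Coincidences $c_\eta=c_{\eta'}$ for $\eta\ne\eta'$ occur only in finite clusters and are accommodated by first amalgamating such terms before the bound is applied.

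For the diagonal contribution, the elementary estimate $|\hat w|\leq\hat w(0)=\int w=1$ (valid because $w\geq 0$ is a normalized weight) bounds the diagonal sum by $\tfrac12\sum_{|\eta|>Q}c_\eta^{-(n+1)} + O(T^{-1})$. Since the exponent $n+1$ strictly exceeds the lattice dimension $n$, the series converges; dominated convergence (with dominating summand $c_\eta^{-(n+1)}$) then justifies passing to the limit $T\to\infty$, and the resulting tail vanishes as $Q\to\infty$.

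For the off-diagonal contribution, the gain $1/T$ in $J_{\eta,\eta'}(T)$ must compensate the size of the double sum, which is effectively restricted to $c_\eta,c_{\eta'}=O(T)$ by the Schwartz decay of $\hat w$. The $1/(c_\eta+c_{\eta'})$ piece is estimated, using $c_\eta+c_{\eta'}\geq 2\sqrt{c_\eta c_{\eta'}}$ and Cauchy--Schwarz, by
\[
\frac{C}{T}\Bigl(\sum_{|\eta|>Q}|\hat w(T^{-1}(x+\eta))|\,c_\eta^{-(n+2)/2}\Bigr)^2,
\]
which is $O(T^{-1})$ for $n=1$ and $O(T^{-1}\log^2 T)$ for $n=2$. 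The $1/|c_\eta-c_{\eta'}|$ piece is handled by grouping $\eta'$ into unit-width annular shells around the value $c_\eta$ and applying the lattice-point bound $\#\{\eta:c_\eta\in[R,R+1]\}=O(R^{n-1})$ of \cite{Colin} already used in Lemma~\ref{lem611}; a telescoping estimate in the shell index yields a bound of order $T^{-1}$ times a power of $\log T$.

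The main obstacle will be precisely this uniform-in-$T$ control of the $1/|c_\eta-c_{\eta'}|$ cross sum---essentially a small-gap problem for the radial values $\{c_\eta\}$, for which the restriction $n\leq 2$ and the circle-problem estimate of \cite{Colin} are decisive. Once the off-diagonal contribution is shown to be $o(1)$ as $T\to\infty$ for each fixed $Q$, the proof concludes by taking $T\to\infty$ first (which leaves only the diagonal tail) and then $Q\to\infty$.
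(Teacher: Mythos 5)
Your overall strategy (expand the square via the product-to-sum identity, separate diagonal from off-diagonal, bound the cross-term integrals by $\min\{T,|c_\eta-c_{\eta'}|^{-1}\}$, count lattice points, exploit the Schwartz decay of $\hat w$ and the restriction $n\leq 2$) is exactly what the paper does, and your treatment of the diagonal and of the $1/(c_\eta+c_{\eta'})$ piece is fine. The gap is in the small-gap off-diagonal regime, which is the crux of the argument, and your sketch there is both too coarse and draws an incorrect conclusion.

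First, unit-width annular shells with the count $\#\{\eta:c_\eta\in[R,R+1]\}=O(R^{n-1})$ cannot control pairs with $0<|c_\eta-c_{\eta'}|<1$: on the innermost shell the bound $1/(T|c_\eta-c_{\eta'}|)$ is unbounded, and pairs with gap $\lesssim T^{-1}$ cannot be estimated by $1/(T\cdot\text{gap})$ at all --- one must instead use the trivial bound $|J_{\eta,\eta'}|\leq C$ there and count how many such near-coincident pairs exist. The paper handles this with a three-part decomposition of the gap (gap $\leq T^{-1}$, $T^{-1}<$ gap $\leq 1$, gap $>1$), and in the intermediate regime it subdivides into intervals of width $|\eta|^{-1+\frac{2}{n+1}}$. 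What makes this work is the \emph{error term} in Colin de Verdi\`ere's estimate, $\#\{\xi\in\Gamma^*:|\xi|<R\}=C_nR^n+O(R^{n-2+\frac{2}{n+1}})$: for an annulus of width $\delta$ it yields $O(\delta R^{n-1}+R^{n-2+\frac{2}{n+1}})$, and for $\delta\lesssim R^{-1+\frac{2}{n+1}}$ the second term dominates. You cite only the leading-order annulus count, which is insufficient precisely where the argument is delicate.

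Second, your concluding claim that the off-diagonal contribution is $o(1)$ as $T\to\infty$ for each fixed $Q$ is false, and the proof cannot be closed that way. The near-coincident pairs (gap $\leq T^{-1}$) contribute, after dividing by $T$ and using $|J|\leq C$, a sum of the form $\sum_{|\eta|>Q}\bigl(T^{-1}|\eta|^{n-1}+|\eta|^{n-2+\frac{2}{n+1}}\bigr)|\eta|^{-n-1}$ (times decaying weights). The second piece is $T$-independent and converges, for $n\leq 2$, to a nonzero quantity of order $Q^{-1}$ (for $n=1$) or $Q^{-1/3}$ (for $n=2$). It vanishes only in the outer limit $Q\to\infty$. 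This is exactly why the lemma is stated with the limit in $T$ taken \emph{first}: the inner limit does not kill the near-diagonal off-diagonal terms, it only kills those with $T^{-1}<$ gap, and the residual $Q$-dependent tail requires the outer limit. Your plan, as written, does not account for this surviving contribution.
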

The proof of this lemma is a modification of the proof of
\cite[Lemma 3.3]{Bl2}.
\begin{proof}
One can check that
\begin{multline*}
\left| \int_1^T \sin \left(\la\,|x+\eta|-\frac{(n-1)\pi}{4} \right)
\sin \left(\la\,|x+\xi|-\frac{(n-1)\pi}{4} \right) \dr\la\ \right|\\
\leq\ C \min \{ T ,\left||x+\eta|- |x+\xi| \right|^{-1} \}\,.
\end{multline*}
Expanding the squared sum in the integral below and taking into
account that $|\hat{w}(x)| < C(1+|x|)^{-2n}$ as $\hat w$ is rapidly
decreasing, we obtain
\begin{multline}\label{eq612}
\frac{1}{T}\int_1^T\left|\sum_{|\eta| >
Q}\frac{\hat{w}(T^{-1}(x+\eta))} {|x+\eta|^{\frac{n+1}{2}}}\;\sin
\left(\la\,|x+\eta|-\frac{(n-1)\pi}{4} \right)\right|^2\dr\la\\
\leq\ C  \sum_{|\eta| > Q}  \frac{T^{-1} |\eta|^{n-1} +
|\eta|^{n-2+\frac{2}{n+1}}}{|\eta|^{(n+1)}(1+T^{-1}|\eta|)^{4n}}\\
 +\; \frac{C}{T} \sum_{|\eta|>Q} \sum_{k=\lceil
T^{-1}|\eta|^{1-\frac{2}{n+1}}
\rceil}^{\lfloor|\eta|^{1-\frac{2}{n+1}}\rfloor}
\frac{|\eta|^{1-\frac{2}{n+1}}}{k}
\frac{|\eta|^{n-2+\frac{2}{n+1}}}{|\eta|^{(n+1)}(1+T^{-1}|\eta|)^{4n}}\\
+\;\frac{C}{T} \sum_{k=1}^{+\infty} \frac{1}{k} \sum_{j>Q}
\frac{j^{\frac{n-3}{2}}{(j+k)^{\frac{n-3}{2}}}}{(1+T^{-1}j)^{2n}(1+T^{-1}(j+k))^{2n}}\,.
\end{multline}
In the right hand side of \eqref{eq612}, the first sum bounds the
contribution of the terms corresponding to pairs $\,\eta,\xi\,$ such
that $\,||x+\eta|-|x+\xi|| \leq T^{-1}\,$. The second sum estimates
the contribution of the terms such that $\,T^{-1}<
||x+\eta|-|x+\xi||\leq 1$. Here we consider each subinterval of
length $|x+\eta|^{-1+\frac{2}{n+1}}$ separately. The last sum takes
care of pairs $\,\eta,\xi\,$ such that $ k < ||x+\eta|-|x+\xi|| \leq
k+1$, with $k \geq 1$.

For $T>Q$, the first sum in \eqref{eq612} is bounded by
$$
\sum_{|\eta| > Q}  \frac{T^{-1} |\eta|^{n-1} +
|\eta|^{n-2+\frac{2}{n+1}}}{|\eta|^{n+1}(1+T^{-1}|\eta|)^{4n}}\
\leq\
\begin{cases}
C\,(Q^{-1}+T^{-1}Q^{-1}) \,,& n=1  \,,\\
C\,(Q^{-1/3}+ T^{-1}|\ln T - \ln Q|) \,,& n = 2\,.\\
\end{cases}
$$
In order to estimate the second sum in \eqref{eq612}, we first note
that
$$
\sum_{k=\lceil T^{-1}|\eta|^{1-\frac{2}{n+1}}
\rceil}^{\lfloor|\eta|^{1-\frac{2}{n+1}}\rfloor}
\frac{|\eta|^{1-\frac{2}{n+1}}}{k}\ <\ C |\eta|^{1-\frac{2}{n+1}}\,
\ln T.
$$
for all sufficiently large $T$. Estimating  the sum over $\eta$ by
an integral we get
\begin{multline*}
\frac{1}{T} \sum_{|\eta|>Q} \sum_{k=\lceil
T^{-1}|\eta|^{1-\frac{2}{n+1}}
\rceil}^{\lfloor|\eta|^{1-\frac{2}{n+1}}\rfloor}
\frac{|\eta|^{1-\frac{2}{n+1}}}{k}
\frac{|\eta|^{n-2+\frac{2}{n+1}}}{|\eta|^{n+1}\,(1+T^{-1}|\eta|)^{4n}}\\
<\ \frac{C \ln T}{T} \int_{Q}^{+\infty} \frac{ r^{n-3} \dr
r}{(1+T^{-1}r)^{4n}}\ \leq\
\begin{cases}
C_Q\,T^{-1}\ln T \,,& n=1  \,,\\
C_Q\,T^{-1}\ln^2T \,,& n = 2\,.\\
\end{cases}
\end{multline*}
To find a bound for the third sum in \eqref{eq612}, we use the
inequality $2a^\frac{n-3}{2} b^\frac{n-3}{2} \leq a^{n-3}+b^{n-3}$
and once again estimate the sum by an integral,
\begin{multline*}
\sum_{j>Q}
\frac{j^{\frac{n-3}{2}}\,(j+k)^{\frac{n-3}{2}}}{(1+T^{-1}j)^{2n}(1+T^{-1}(j+k))^{2n}}\\
< \ \frac{C}{(1+T^{-1}k)^{2n}} \int_{Q}^{+\infty} \frac{r^{n-3}\dr
r}{(1+T^{-1}r)^{2n}} + C \int_{Q+k}^{+\infty} \frac{r^{n-3}\dr
r}{(1+T^{-1}r)^{2n}}\,.
\end{multline*}
Hence we have
$$
\frac{C}{T}\,\sum_{k=1}^{+\infty} \frac{1}{k} \sum_{j>Q}
\frac{j^{\frac{n-3}{2}}\,(j+k)^{\frac{n-3}{2}}}{(1+T^{-1}j)^{2n}(1+T^{-1}(j+k))^{2n}}
\ \leq\
\begin{cases}
C_Q\,T^{-1} \ln T \,,& n=1  \,,\\
C_Q\,T^{-1} \ln^2 T \,,& n = 2\,.\\
\end{cases}
$$
This completes the proof of the lemma.
\end{proof}

Note that the proof of
Lemma~\ref{lem613} uses the fact that the limit with respect to
$T$ is taken first.

\begin{lemma}
\label{lem614}
$$
\int_1^T\left|\sum_{|\eta| \leq Q}\frac{1-\hat{w}(T^{-1}
(x+\eta))}{|x+\eta|^{(n+1)/2}}\;\sin
\left(\la\,|x+\eta|-\frac{(n-1)\pi}{4} \right)\right|^2\dr\la\ =\
o(T)\,.
$$
\end{lemma}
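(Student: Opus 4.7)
The key observation is that the outer sum is now finite: since $Q$ is fixed, there are only $O_Q(1)$ lattice points with $|\eta|\leq Q$. Also, since $x\ne 0$ on the torus (we reduced to $y=0$ and consider $x\ne y$), the denominators $|x+\eta|^{(n+1)/2}$ are uniformly bounded below by a positive constant depending only on $x$ and $Q$.

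The plan is as follows. First, expand the square into a finite double sum over pairs $\eta,\xi$ with $|\eta|,|\xi|\leq Q$:
\begin{equation*}
\sum_{|\eta|,|\xi|\leq Q} \frac{(1-\hat w(T^{-1}(x+\eta)))(1-\hat w(T^{-1}(x+\xi)))}{|x+\eta|^{(n+1)/2}|x+\xi|^{(n+1)/2}}\int_1^T \sin\!\bigl(\la|x+\eta|-\tfrac{(n-1)\pi}{4}\bigr)\sin\!\bigl(\la|x+\xi|-\tfrac{(n-1)\pi}{4}\bigr)\dr\la.
\end{equation*}
For each such pair, the trivial bound on the oscillatory integral used already in the proof of Lemma~\ref{lem613} gives
\begin{equation*}
\left|\int_1^T \sin\!\bigl(\la|x+\eta|-\tfrac{(n-1)\pi}{4}\bigr)\sin\!\bigl(\la|x+\xi|-\tfrac{(n-1)\pi}{4}\bigr)\dr\la\right|\ \leq\ C\,T.
\end{equation*}

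Next, I would exploit continuity of $\hat w$ at the origin. Since $w\in C_0^\infty(\R^n)$ and $\int w=1$, its Fourier transform $\hat w$ is smooth with $\hat w(0)=1$. Consequently, for each fixed $\eta$ with $|\eta|\leq Q$,
\begin{equation*}
1-\hat w\!\bigl(T^{-1}(x+\eta)\bigr)\ \longrightarrow\ 0\qquad\text{as }T\to+\infty,
\end{equation*}
uniformly in the finite index set $\{|\eta|\leq Q\}$. (In fact one gets the quantitative bound $O(T^{-1})$, but we do not need it.) Therefore the prefactor in front of each oscillatory integral is $o(1)$ as $T\to\infty$, and the contribution of each of the $O_Q(1)$ pairs is $o(1)\cdot O(T)=o(T)$.

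Summing over the finitely many pairs gives $o(T)$, which is exactly the claim. The main point to check carefully is merely that the $o(1)$ factor can be pulled outside of the sum uniformly, which is immediate because $Q$ is fixed independently of $T$; there is no genuine analytic obstacle here, in contrast to Lemma~\ref{lem613}, where the sum over $\eta$ is infinite and the cancellation between distinct frequencies $|x+\eta|$ had to be used. \qed
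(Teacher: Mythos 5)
Your argument is correct and is essentially the same as the paper's, which simply notes that the sum has finitely many terms and that $\hat w(T^{-1}x)\to 1$ as $T\to\infty$; you have merely spelled out the elementary bookkeeping (trivial $O(T)$ bound on each oscillatory integral times an $o(1)$ prefactor, summed over $O_Q(1)$ pairs).
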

\begin{proof}
This estimate holds because the sum involves a finite number of
terms and $\,\lim_{T \rightarrow +\infty}\hat{w}(T^{-1} x)=1\,$ for
all $x$.
\end{proof}
By Lemma \ref{lem611} we get
$$
\lim_{T \rightarrow +\infty} \frac{1}{T} \int_1^T\left| N_{x,0}(\la)
- N_{x,0}^{((\la T)^{-1})}(\la) \right|^2 \frac{\dr\la}{\la^{n-1}} =
0\,.
$$
Lemmas \ref{lem612}, \ref{lem613} and \ref{lem614} imply
$$
\lim_{Q \rightarrow +\infty} \lim_{T \rightarrow +\infty}
\frac{1}{T} \int_1^T\left| \la^{\frac{1-n}{2}}N_{x,0}^{((\la
T)^{-1})}(\la) - \sum_{|\eta| \leq
Q}\frac{2\,\sin\left(\la\,|x+\eta|-\frac{(n-1)\pi}{4}\right)}
{(2\pi)^{(n+1)/2}\,|x+\eta|^{(n+1)/2}}\,\right|^2\dr\la\ =\ 0\,,
$$
so that
$$
\lim_{Q \rightarrow +\infty} \lim_{T \rightarrow +\infty}
\frac{1}{T} \int_1^T\left| \tilde N_{x,0}(\la) - \sum_{|\eta| \leq
Q}\frac{2\,\sin\left(\la\,|x+\eta|-\frac{(n-1)\pi}{4}\right)}
{(2\pi)^{(n+1)/2}\,|x+\eta|^{(n+1)/2}}\,\right|^2\dr\la\ =\ 0\,.
$$
This implies \eqref{form61} and therefore proves Examples
\ref{example:circle} and \ref{besic:torus}.
\begin{remark}
\label{difficulty} As was indicated in Remark \ref{remark:tori},
this approach can not work for $n \geq 3$. Indeed, in higher
dimensions we can no longer bound the sums under the integral in
Lemmas \ref{lem611}, \ref{lem612} and \ref{lem613} by taking the
absolute value of each term.  A more delicate analysis is required
in this case (see \cite{BB} for some related results).

We also note that there is a simpler and more direct proof of
Example \ref{example:circle} based on the identity \cite[formula
1.422(4)]{GR}
\begin{equation}
\label{ident:circle} \frac{1}{\sin^2(\frac{s}{2})} =
\sum_{k=-\infty}^{+\infty} \frac{4}{(s+2\pi k)^2}.
\end{equation}
It works in dimension $n=1$ only, and we leave the details to the
interested reader.
\end{remark}

\subsection{Spheres}\label{examples:spheres}
In this subsection we are going to justify
Examples~\ref{intro:spheres} and \ref{besic:sphere}. The spectral
function on $\mathbb{S}^{n}$ depends only on the distance or,
equivalently, on the angle $s$ between the points $x$ and $y$. For
any $x \in \mathbb S^n$, the eigenfunctions orthogonal to the
Legendre polynomials $P_m(n,t)$, with $t = \cos s$ and $m \geq 0$,
vanish at $x$. Hence, $P_m(n,t)$ are the only eigenfunctions that
matter to compute the spectral function. They satisfy the
differential equation
$$
((1-t^2) \partial_t^2 - nt\partial_t) P_m(n,t) = - m(m+n-1)
P_m(n,t)\,,
$$
which is equivalent to the eigenvalue problem $\Delta f = m(m+n-1)
f$ on $\mathbb S^n$ with $f$ depending only on the parameter $t \in
[-1,1]$. Note also that
$$
\int_{-1}^{1}P_m^2(n,t)(1-t^2)^{\frac{n-2}{2}} dt =
\frac{D_n}{D_{n-1}} \frac{1}{N(n,m)}, \quad P_m(n,1) = 1,
$$
where
$$
D_n = \frac{2 \pi^{\frac{n+1}{2}}}{\Gamma(\frac{n+1}{2})}\quad\text
{and}\quad N(n,m) = \frac{(2m+n-1)
\Gamma(m+n-1)}{\Gamma(m+1)\Gamma(n)}
$$
(see \cite[Lemma 10]{Muller}). We assume that $x$ and $y$ are
non-conjugate points, so that $0 < s < \pi$. The rescaled spectral
function on $\mathbb{S}^n$, for $n \geq 2$, is given by
$$
\tilde N_{s}(\mu) = \mu^{\frac{1-n}{2}} N_{s}(\mu)
=\mu^{\frac{1-n}{2}} \sum_{0 \leq m(m+n-1) \leq \mu^2}
\frac{N(n,m)P_m(n,\cos s)}{D_n}
$$
Here we are abusing notation slightly, since according to
\eqref{specfundef} the inequalities in the summation limits should
be strict. However,  because of the rescaling this makes no
difference in $B^2$.

Consider the following generating function (\cite{Muller}, Lemma
17):
\begin{equation}\label{starr0}
\sum_ {m = 0}^{+\infty} N(n,m) P_m(n,t) z^m\ =\
\frac{1-z^2}{(1+z^2-2zt)^{\frac{n+1}{2}}}\,.
\end{equation}
It is easy to see that $\sum_{k=0}^{+\infty} a_k z^k = f(z)$ implies
$\,\sum_{k=0}^{+\infty} \left(\sum_{j=0}^{k} a_j \right) z^k =
\frac{f(z)}{1-z}\,$. Therefore \eqref{starr0} implies
\begin{equation}\label{starr}
\sum_ {m = 0}^{+\infty} N_{s}(\sqrt{m(m+n-1)}) z^m\ =\
\frac{1+z}{D_n (1+z^2-2zt)^{\frac{n+1}{2}}}\,.
\end{equation}

\begin{lemma}
\label{lem621} For $m \in \mathbb{N}$, the spectral function on
$\mathbb{S}^n$ satisfies the following asymptotic formula
\begin{multline*}
N_{s}(\sqrt{m(m+n-1)})\\
=\  \frac{2\cos(\frac{s}{2})\,m^{\frac{n-1}{2}}}{(2 \pi \sin
s)^{\frac{n+1}{2}}}\;\cos\left(\left(\frac{n}{2}+m
\right)s-\frac{(n+1)}{4}\pi\right) + O(m^{\frac{n-3}{2}})\,,\qquad
m\to \infty\,.
\end{multline*}
\end{lemma}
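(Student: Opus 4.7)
The natural strategy is singularity analysis (Darboux's method) applied to the generating function identity derived just above the lemma. First I would factor the denominator as $1+z^2-2zt = (1-ze^{is})(1-ze^{-is})$, so that
$$G(z) \;:=\; \sum_{m \geq 0} N_s(\sqrt{m(m+n-1)})\,z^m \;=\; \frac{1+z}{D_n\,(1-ze^{is})^{(n+1)/2}\,(1-ze^{-is})^{(n+1)/2}}.$$
Since $0 < s < \pi$, the function $G$ is analytic in the closed unit disk except for two distinct conjugate branch singularities on the unit circle, located at $z_\pm := e^{\mp is}$.

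Next I would extract the leading singular part at each of $z_\pm$. Near $z_+ = e^{-is}$, the factor $(1-ze^{-is})^{-(n+1)/2}$ and the polynomial $1+z$ are holomorphic; freezing them at $z = e^{-is}$ yields the principal part
$$G(z) \;=\; A_+\,(1-ze^{is})^{-(n+1)/2} + R_+(z), \qquad A_+ \;:=\; \frac{(1+e^{-is})(1-e^{-2is})^{-(n+1)/2}}{D_n},$$
where $R_+$ has at most a $(1-ze^{is})^{-(n-1)/2}$-type singularity at $z_+$. A conjugate decomposition holds near $z_-$ with constant $A_- = \overline{A_+}$. Applying the classical identity $[z^m](1-z/z_0)^{-\alpha} = \binom{m+\alpha-1}{m}\,z_0^{-m}$, together with the expansion $\binom{m+(n-1)/2}{m} = m^{(n-1)/2}/\Gamma((n+1)/2) + O(m^{(n-3)/2})$, the two singular contributions combine into
$$N_s(\sqrt{m(m+n-1)}) \;=\; [z^m]G \;=\; 2\,\mathrm{Re}\!\left(\frac{A_+\,m^{(n-1)/2}}{\Gamma((n+1)/2)}\,e^{ims}\right) + O(m^{(n-3)/2}).$$

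The third step is a direct trigonometric simplification of $A_+$. Using $1+e^{-is} = 2e^{-is/2}\cos(s/2)$ and $1-e^{-2is} = 2ie^{-is}\sin s$, together with $i^{-(n+1)/2} = e^{-i\pi(n+1)/4}$ and $D_n\,\Gamma((n+1)/2) = 2\pi^{(n+1)/2}$, all the half-angle and power-of-$i$ factors collapse into a single phase $e^{i((m+n/2)s - \pi(n+1)/4)}$ multiplied by $\cos(s/2)\,m^{(n-1)/2}/(2\pi\sin s)^{(n+1)/2}$; taking twice the real part reproduces exactly the cosine factor in the statement.

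The main obstacle is rigorously controlling the $O(m^{(n-3)/2})$ remainder. This is where the Darboux transfer theorem enters: once the two leading singular pieces have been subtracted, the function $\tilde G(z) := G(z) - A_+(1-ze^{is})^{-(n+1)/2} - A_-(1-ze^{-is})^{-(n+1)/2}$ has branch singularities on the unit circle of exponent at most $-(n-1)/2$, and a Hankel-type contour deformation around both branch cuts yields $[z^m]\tilde G = O(m^{(n-3)/2})$. Because $0 < s < \pi$ keeps us bounded away from the conjugate values $0$ and $\pi$, the two singularities stay separated and all implicit constants depend continuously on $s$.
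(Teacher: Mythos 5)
Your proposal is correct and follows essentially the same route as the paper: Darboux's method applied to the generating function $\sum_m N_s(\sqrt{m(m+n-1)})z^m = (1+z)/\bigl(D_n(1+z^2-2zt)^{(n+1)/2}\bigr)$, extracting the singular behaviour at $z=e^{\pm is}$ and transferring it to coefficient asymptotics. Your use of the factored form $(1-ze^{is})(1-ze^{-is})$ and of the Flajolet--Odlyzko transfer for the $O(m^{(n-3)/2})$ remainder is a minor streamlining of the paper's more classical argument, which subtracts a longer truncated singular expansion to obtain a $C^1$ remainder and then compares Taylor coefficients directly.
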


\begin{proof}
Following the approach of  \cite[Chapter VII, Sections  6.6 and
6.7]{Courant}, we prove Lemma \ref{lem621} using the Darboux method
applied to the generating function \eqref{starr}. The Darboux method
relies on the following fact: the coefficients $a_k$ of the Taylor
expansion at the origin of the function $f(z) = \sum_{k=0}^{+\infty}
a_k z^k$, holomorphic in the open disc $\mathbb D$, decay as
$O(k^{-r})$ if $f(e^{ix}) \in C^r(\mathbb R)$. The first step to
obtain the asymptotic formula is to approximate the generating
function \eqref{starr}, taking into account the singularities of the
highest order.

Near the singular point $x=e^{\pm is}$, we can write
\begin{multline*}
\frac{1}{(1+z^2-2zt)^{\frac{n+1}{2}}}\  =\ \frac{\left((e^{\pm
is}-e^{\mp is})+(z-e^{\pm is})\right)^{-\frac{n+1}{2}}}{(z-e^{\pm
is})^{\frac{n+1}{2}}}\\
=\ \frac{e^{\pm i\frac{3(n+1)\pi}{4}}}{(2\sin
s)^{\frac{n+1}{2}}(z-e^{\pm is})^{\frac{n+1}{2}}} \sum_{k =
0}^{+\infty} \binom{-\frac{n+1}{2}}{k}\left( \frac{z-e^{\pm is}}{\pm
2i\sin s} \right)^k,
\end{multline*}
where
$$
\binom{\alpha}{k} =
\begin{cases}
1,& k = 0,\\
\frac{\alpha(\alpha-1)...(\alpha-k+1)}{k!},& k > 0\,.
\end{cases}
$$
For $p > \frac{n+1}{2}$, we have:
\begin{multline}
\label{ququ} \frac{1}{(2\sin s)^{\frac{n+1}{2}}} \sum_{k =
0}^{p}\binom{-\frac{n+1}{2}}{k} \left(
e^{i\frac{3(n+1)\pi}{4}}\frac{(z-e^{is})^{k-\frac{n+1}{2}}}{(2i\sin
s)^k} +
e^{-i\frac{3(n+1)\pi}{4}}\frac{(z-e^{-is})^{k-\frac{n+1}{2}}}{(-2i\sin
s)^k}
\right)\\
- \frac{1}{(1+z^2-2zt)^{\frac{n+1}{2}}}\ \in\ C^1(\bar{\mathbb
D})\,.
\end{multline}
Expanding the powers of $z-e^{\pm is}$ by Taylor's formula at $z=0$,
we see that Taylor's expansion at $z=0$ of the difference
\eqref{ququ} is given by
$$
\frac{2}{(2\sin s)^{\frac{n+1}{2}}} \sum_{k = 0}^{p} \sum_{l =
0}^{+\infty}\binom{-\frac{n+1}{2}}{k} \binom{k-\frac{n+1}{2}}{l}
\frac{\cos
\left((\frac{n+1}{2}+l-k)s+(l-\frac{n+1}{4}-\frac{k}{2})\pi
\right)}{(2\sin s)^k} z^l.
$$
Using \eqref{starr} and comparing the coefficients in front of the
same powers of $z$, we deduce that
\begin{multline*}
D_n N_{s}(\sqrt{m(m+n-1)})\\
= \frac{2}{(2\sin s)^{\frac{n+1}{2}}} \sum_{k = 0}^{p}
\binom{-\frac{n+1}{2}}{k} \left( \frac{\cos
\left((\frac{n+1}{2}+m-k)s+(m-\frac{n+1}{4}-\frac{k}{2})\pi
\right)}{(2\sin s)^k}\binom{k-\frac{n+1}{2}}{m} \right)\\
+ \frac{2}{(2\sin s)^{\frac{n+1}{2}}} \sum_{k = 0}^{p}
\binom{-\frac{n+1}{2}}{k} \left( \frac{\cos
\left((\frac{n-1}{2}+m-k)s+(m-\frac{n+5}{4}-\frac{k}{2})\pi
\right)}{(2\sin s)^k}\binom{k-\frac{n+1}{2}}{m-1} \right)\\
+ O(m^{-1}).
\end{multline*}
 as $m \rightarrow \infty$. Here the error estimate follows from \eqref{ququ}
in view of the remark in the beginning of the proof. Taking into
account the estimate
$$
\frac{\Gamma(m+\frac{n+1}{2})}{\Gamma(m+1)}\ =\ m^{\frac{n-1}{2}} +
O(m^{\frac{n-3}{2}})\,,
$$
we see that the main term of the expansion
above corresponds to $k=0$. Hence we obtain
\begin{multline*}
D_n N_{s}(\sqrt{m(m+n-1)})\\
= \frac{4 \cos(\frac{s}{2})\, \Gamma(m+\frac{n+1}{2})}{(2\sin
s)^{\frac{n+1}{2}}\,\Gamma(m+1)\,\Gamma(\frac{n+1}{2})}\;
\cos\left(\left(\frac{n}{2}+m \right)s-\frac{(n+1)}{4}\pi \right) +
O(m^{\frac{n-3}{2}})\,.
\end{multline*}
This completes the proof of the lemma.
\end{proof}

Note that on $\mathbb S^n$ there are simple expressions for
$N_s(\mu)$ at conjugate points:
$$
N_{0}(\sqrt{m(m+n-1)}) = \frac{(n+2m)\, (n+m-1)!}{D_n\, m!\, n!} =
\frac{2 \pi^{n/2}}{(2\pi)^n \Gamma(n/2) n}\;m^n+O(m^{n-1})\,,
$$
$$
N_{\pi}(\sqrt{m(m+n-1)}) = \frac{(-1)^m (n+m-1)!}{D_n\, m! \,(n-1)!}
\neq o(m^{n-1})
$$
These formulae and Lemma \ref{lem621} prove Example
\ref{intro:spheres}.

\smallskip

Let us now complete the proof of Example \ref{besic:sphere}. Lemma
\ref{lem621} implies that
\begin{multline*}
\int_1^{T} \left| \tilde N_{s}(\la)
-\frac{2\cos(\frac{s}{2})}{(2 \pi \sin s)^{\frac{n+1}{2}}} \cos
\left( \left( \lfloor \la - \frac{n-1}{2} \rfloor +\frac{n}{2}
\right)s-\frac{(n+1)}{4}\pi \right) \right|^2 d \la\\
\leq\ C \sum_{m=1}^T \frac{1}{m} = O(\ln T)\,.
\end{multline*}
Here we
have used that the interval
$$
\left[\sqrt{m(m+n-1)},
\sqrt{(m+1)(m+n)}\right)\,\bigcap\,\{\la\,:\,\lfloor \la -
(n-1)/2\rfloor \ne m\},
$$
where the formula above does not agree with Lemma \ref{lem621}, is
of size $O(m^{-1})$. This follows from the simple asymptotic formula
$\sqrt{m(m+n-1)} = m + \frac{n-1}{2} + O(m^{-1}).$

Therefore, $\tilde N_{s}(\la)$ is $B^2$-equivalent to
$$
\frac{2\cos(\frac{s}{2})}{(2 \pi \sin s)^{\frac{n+1}{2}}}
\cos\left(\left( \lfloor \la - \frac{n-1}{2} \rfloor
+\frac{n}{2}\right)s-\frac{(n+1)}{4}\pi\right)
$$
which can be  rewritten as
\begin{equation}
\label{form62} \frac{1}{(2
\pi)^{\frac{n+1}{2}}\sin^{\frac{n-1}{2}}(s)} \frac{1}{
\sin(\frac{s}{2})} \sin\left(\left( \lfloor \la - \frac{n-1}{2}
\rfloor +\frac{n}{2}\right)s-\frac{(n-1)}{4}\pi\right)
\end{equation}

\begin{lemma}
\label{lem622} The following expansions hold in $B^2$ for $0 < s <
\pi$:
\begin{align*}
\frac{\sin((\lfloor \la \rfloor +
\frac{1}{2})s)}{2\sin(\frac{s}{2})}\ &\sim\ \sum_{k
=-\infty}^{+\infty} \frac{1}{|s + 2\pi k|}\, \sin(\la |s + 2\pi
k|)\,,\\
\frac{\sin(\lfloor \la  + \frac{1}{2} \rfloor
s)}{2\sin(\frac{s}{2})}\ &\sim\ \sum_{k = -\infty}^{+\infty}
\frac{(-1)^k}{|s + 2\pi k|}\, \sin(\la |s + 2\pi k|)\,,\\
\frac{\cos((\lfloor \la \rfloor +
\frac{1}{2})s)}{2\sin(\frac{s}{2})}\ &\sim\ \sum_{k =
-\infty}^{+\infty} \frac{(-1)^{H(k)}}{|s + 2\pi k|}\, \cos(\la |s +
2\pi k|)\,,\\
\frac{\cos(\lfloor \la  + \frac{1}{2} \rfloor
s)}{2\sin(\frac{s}{2})}\ &\sim\ \sum_{k = -\infty}^{+\infty}
\frac{(-1)^{k+H(k)}}{|s + 2\pi k|}\, \cos(\la |s + 2\pi k|)\,.
\end{align*}
Here $H(x)$ is the ``reversed'' Heaviside function: $H(x) = 0$ if $x
\geq 0$, and $H(x) = 1$ if $x < 0$.
\end{lemma}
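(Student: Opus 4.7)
The first formula is immediate from Example~\ref{example:circle}: combining the closed form \eqref{specfuncircle} with the $B^2$-expansion \eqref{approxcircle} (both with $n=1$, as already established in Section~\ref{examples:tori}) yields, for $s=d_{x,y}$,
$$
\frac{\sin\bigl((\lceil\la\rceil-\tfrac12)s\bigr)}{2\sin(s/2)}\ \sim\ \sum_{k\in\Z}\frac{\sin(\la|s+2\pi k|)}{|s+2\pi k|}\quad\text{in }B^2,
$$
and the substitution $\lceil\la\rceil-\tfrac12=\lfloor\la\rfloor+\tfrac12$ (valid outside the integers) gives the first identity.

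For the other three identities, I combine a Dirichlet-kernel identity with a Poisson summation. The three standard identities
\begin{align*}
\frac{\sin(Ns)}{2\sin(s/2)} &= \sum_{n=0}^{N-1}\cos\bigl((n+\tfrac12)s\bigr),\\
\frac{\cos\bigl((N+\tfrac12)s\bigr)}{2\sin(s/2)} &= \tfrac12\cot(s/2)-\sum_{n=1}^{N}\sin(ns),\\
\frac{\cos(Ns)}{2\sin(s/2)} &= \frac{1}{2\sin(s/2)}-\sum_{n=0}^{N-1}\sin\bigl((n+\tfrac12)s\bigr)
\end{align*}
are used with $N=\lfloor\la+\tfrac12\rfloor$ for lines 2 and 4 of Lemma~\ref{lem622} and $N=\lfloor\la\rfloor$ for line 3. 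Each resulting finite trigonometric sum in $\la$ is then converted via Poisson summation applied to a function $\phi(y)=\sin(ys+\alpha)\chi_{[a,b]}(y)$ or $\phi(y)=\cos(ys+\alpha)\chi_{[a,b]}(y)$, where $\alpha\in\{0,s/2\}$ and $[a,b]\subset\R$ is chosen so that $\sum_{n\in\Z}\phi(n)$ reproduces the finite sum in question (concretely $[0,\la]$ when $\alpha=0$ and $[-\tfrac12,\la-\tfrac12]$ when $\alpha=s/2$). A direct evaluation of $\hat\phi(\pm 2\pi m)$, pairing $m\leftrightarrow -m$ to cancel imaginary parts, yields an equality of the form
$$
\sum_{n\in\Z}\phi(n)\ =\ \sum_{m\in\Z}\,\frac{c_m\,f_m(\la)}{s+2\pi m},
$$
where $f_m\in\{\sin(\la(s+2\pi m)),\cos(\la(s+2\pi m))\}$.

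Three bookkeeping points produce the precise signs asserted in the lemma. First, the half-integer shift $\alpha=s/2$ introduces a factor $e^{-i\pi m}=(-1)^m$ into $\hat\phi(2\pi m)$; this is the source of the $(-1)^k$ in lines 2 and 4. Second, after identifying $1/(s+2\pi m)=(-1)^{H(m)}/|s+2\pi m|$, the oddness of sine absorbs this sign into $\sin(\la|s+2\pi m|)/|s+2\pi m|$, whereas the evenness of cosine leaves it intact as $(-1)^{H(m)}\cos(\la|s+2\pi m|)/|s+2\pi m|$; this explains why $(-1)^{H(k)}$ appears in lines 3 and 4 but not in lines 1 and 2. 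Third, the constants $\cot(s/2)/2$ and $1/(2\sin(s/2))$ on the left-hand sides of the last two Dirichlet-kernel identities are cancelled by the $m$-independent contributions of the Poisson series via the Mittag--Leffler identities
$$
\sum_{m\in\Z}\frac{(-1)^{H(m)}}{|s+2\pi m|}=\tfrac12\cot(s/2),\qquad
\sum_{m\in\Z}\frac{(-1)^{m+H(m)}}{|s+2\pi m|}=\frac{1}{2\sin(s/2)},
$$
both understood as symmetric sums and obtained from the standard partial-fraction expansions of $\cot$ and $\csc$ evaluated at $s/2$.

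The main obstacle is $B^2$-convergence. Poisson summation applied to a function of bounded variation only holds in a symmetric or Ces\`aro sense, so pointwise equality must be upgraded to a genuine $B^2$-asymptotic. As in Section~\ref{examples:tori}, this is done by orthogonality: for $0<s<\pi$ the frequencies $\{|s+2\pi m|\}_{m\in\Z}$ are pairwise distinct, so the trigonometric functions in the series are mutually orthogonal in $B^2$, and the $\ell^2$-summability of the coefficients $1/|s+2\pi m|$ ensures that the tail $\sum_{|m|>K}$ has $B^2$-norm bounded by $\bigl(\sum_{|m|>K}|s+2\pi m|^{-2}\bigr)^{1/2}\to 0$ as $K\to\infty$. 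A truncation argument parallel to Lemmas~\ref{lem611}--\ref{lem614} then upgrades the pointwise symmetric equality to the asymptotic $B^2$-equivalence asserted in Lemma~\ref{lem622}.
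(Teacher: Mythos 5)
Your proposal is correct in substance and takes a route that differs in its details from what the paper has in mind. The paper suggests two methods: redo the mollification-plus-lattice-counting argument of Lemmas~\ref{lem611}--\ref{lem614} in the case $n=1$ for each of the four expressions, or prove the expansions ``directly'' from the partial-fraction identity \eqref{ident:circle}. You instead observe that the first line follows immediately by combining the $B^2$-expansion \eqref{form61} (with $n=1$) with the exact closed form \eqref{specfuncircle}, an observation the paper does not make explicit; and for lines 2--4 you reduce each left-hand side to a standard Dirichlet-kernel identity, then apply Poisson summation to the resulting truncated trigonometric function and the Mittag--Leffler expansions of $\cot$ and $\csc$ to identify both the series coefficients and the leftover constants. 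Your bookkeeping is correct: I verified that the half-integer shift $y\mapsto y+\tfrac12$ inserts the factor $e^{-i\pi m}=(-1)^m$ into $\hat\phi(2\pi m)$, that $(s+2\pi k)^{-1}=(-1)^{H(k)}|s+2\pi k|^{-1}$ gives rise to $(-1)^{H(k)}$ only in the cosine lines, and that $\sum_m(s+2\pi m)^{-1}=\tfrac12\cot(s/2)$ and $\sum_m(-1)^m(s+2\pi m)^{-1}=\tfrac12\csc(s/2)$ absorb the constants in the last two Dirichlet identities exactly as required.

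The one place you should be more careful is the passage from the symmetric Poisson equality to the $B^2$-equivalence. Poisson summation for the sharply truncated $\phi=\chi_{[a,b]}\cdot(\text{trig})$ only converges symmetrically, and ``a truncation argument parallel to Lemmas~\ref{lem611}--\ref{lem614}'' would literally require re-introducing the mollifier $w_{(\la T)^{-1}}$ and re-estimating the cross-terms, which you do not carry out. This is consistent with the paper's own level of detail (its proof of the lemma is two sentences), but if you want a self-contained rigorous argument you could sidestep Poisson summation entirely: each left-hand side is a bounded step function $g(\lfloor\la\rfloor)$ or $g(\lfloor\la+\tfrac12\rfloor)$, so its Bohr--Fourier coefficient at frequency $\theta$ equals $\frac{1-e^{-i\theta}}{i\theta}\lim_{T\to\infty}\frac1T\sum_{n<T}g(n)e^{-i\theta n}$, which is elementary to compute and nonzero only for $\theta=\pm(s+2\pi k)$; Parseval in $B^2$ for the distinct frequencies $|s+2\pi k|$ then gives the asserted expansions directly, without any convergence issues for the formal Poisson series.
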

\begin{proof}
Each of these expansions can be obtained in a similar way as
$\eqref{form61}$ in the case $n=1$. Alternatively, they could be
proved directly using the identity \eqref{ident:circle}.
\end{proof}
Applying Lemma \ref{lem622} to the formula \eqref{form62}, we deduce
that
$$
\tilde N_{s}(\la) \sim \frac{1}{\pi\,(2 \pi\sin s)^{\frac{n-1}{2}}}
\sum_{j = 0}^{\infty} \frac{\sin \left(\la\,l(\gamma_j) -
\frac{(n-1)\pi}{4} - \omega(\gamma_j)\frac{\pi}{2}
\right)}{l(\gamma_j)}\,,
$$
where the sum runs over all geodesic segments $\gamma_j$ starting at
$x$ and ending at $y$. Here $l(\gamma_j)$ is the length of
$\gamma_j$ and $\omega(\gamma_j)$ is the Morse index of $\gamma_j$.
Counting the points conjugate to $x$ on  the geodesic $\gamma$ of
length $|s + 2\pi k|$,  one gets $\omega(\gamma)
=(n-1)\left(2|k|-H(k)\right)$. This completes the proof of Example
\ref{besic:sphere}.

\section{The $\zeta$-function: proofs}\label{zeta}

\subsection{Auxiliary functions}\label{zeta:auxiliary}
Throughout the section we denote
\begin{enumerate}
\item[$\bullet$] $\,t:=\re z\,$, $\,s:=\im z\,$ and
\item[$\bullet$]
$\langle s\rangle:=(1+|s|^2)^{1/2}$.
\end{enumerate}

Let $\rho$ be a real-valued even function satisfying
Condition~\ref{rho} with a sufficiently small $\,\de\,$. Let us fix
an arbitrary $c>0\,$ such that $\,2\,c<\la_1\,$ and
$\,c<d_{x,y}^{-1}\,$ for all $\,x,y\in M\,$, and denote
\begin{equation}\label{z0,z1}
Z_{x,y;j}(z):=z\int_c^\infty
\la^{-z-1}\,N_{x,y;j}(\la)\,\dr\la\,,\qquad j=0,1,
\end{equation}
where $\,N_{x,y;j}(\la)\,$ are the functions defined by
\eqref{n0,n1}. Then
$$
Z_{x,y}\ =\ Z_{x,y;0}(z)\,+\,Z_{x,y;1}(z)
$$
and, due to the finite speed of propagation,
$\left|Z_{x,y}(z)-Z_{x,y;1}(z)\right|\leq C_t\,$ whenever
$\,\de<d_{x,y}\,$.

By Lemma \ref{n1:conv}, we have
\begin{equation}\label{zeta:z1-0}
Z_{x,y;1}(z)\ :=\ \int h_1(z,\mu)\,\dr N_{x,y}(\mu)\,,
\end{equation}
where
$\,h_1(z,\mu):=z\int_c^\infty\la^{-z-1}\,\rho_1(\la-\mu)\,\dr\la\,$.
Substituting
$\,z\la^{-z-1}=-\frac{\dr\hfill}{\dr\la}\left(\la^{-z}\right)\,$,
integrating by parts and then changing variables, we obtain
\begin{equation}\label{zeta:h1-1}
h_1(z,\mu)\ =\
\mu^{-z}+c^{-z}\,\rho_1(c-\mu)-\int_{c-\mu}^\infty(\la+\mu)^{-z}\,
\rho(\la)\,\dr\la\,,\quad\forall\mu>c\,.
\end{equation}
Since one can differentiate under the integral sign, $h_1(z,\mu)$ is
an entire function of the variable $\,z\in\C\,$ smoothly depending
on the parameter $\,\mu\in(2c,+\infty)\,$, such that
\begin{equation}\label{zeta:h1}
\frac{\partial^m}{\partial\mu^m}\,h_1(z,\mu)
=(-1)^m\,\frac{\Gamma(z+m)}{\Gamma(z)}\,h_1(z+m,\mu)\,.
\end{equation}

\begin{lemma}\label{zeta:lemma-h1}
For all $\mu\in(2c,+\infty)$, we have $\,|h_1(z,\mu)|\leq
C_t\,\mu^{-t}\,$ and
\begin{equation}\label{zeta:h1-2}
|h_1(z,\mu)|\ \leq\ C_{t,\vk}\,\mu^{-t-\vk}\,\langle
s\rangle^\vk\,(1+|\ln\langle s\rangle-\ln\mu|)^{-r}\,,
\qquad\forall\vk,r>0\,.
\end{equation}
\end{lemma}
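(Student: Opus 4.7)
The plan is to deduce both inequalities directly from the explicit formula \eqref{zeta:h1-1}, exploiting Condition~\ref{rho} through the moment identities $\int_{\R}\tau^k\rho(\tau)\dr\tau=i^{-k}\hat\rho^{(k)}(0)=\delta_{k0}$, valid for all $k\geq 0$ because $\hat\rho\equiv 1$ near the origin. The first estimate $|h_1(z,\mu)|\leq C_t\mu^{-t}$ is straightforward: for $\mu>2c$, rapid decay of $\rho_1$ gives $|c^{-z}\rho_1(c-\mu)|=O_t(\mu^{-\infty})$; in the integral term, split the range at $|\tau|=\mu/2$. On $|\tau|<\mu/2$ one has $\mu+\tau\in(\mu/2,3\mu/2)$ so $|(\mu+\tau)^{-z}|\leq C_t\mu^{-t}$ and $\rho\in L^1$, while on $|\tau|\geq\mu/2$ the Schwartz decay of $\rho$ beats the polynomial factor $(\mu+\tau)^{-t}$.

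For the second bound I would first establish the refined estimate
\[
|h_1(z,\mu)|\ \leq\ C_{t,N}\,\langle s\rangle^N\,\mu^{-t-N}\qquad\text{for every integer }N\geq 0.
\]
Taylor expand $(\mu+\tau)^{-z}=\mu^{-z}(1+\tau/\mu)^{-z}$ to order $N-1$ with integral remainder,
\[
(\mu+\tau)^{-z}\ =\ \sum_{k=0}^{N-1}\binom{-z}{k}\mu^{-z-k}\tau^k\ +\ \binom{-z}{N}\tau^N\int_0^1 N(1-\theta)^{N-1}(\mu+\theta\tau)^{-z-N}\dr\theta,
\]
and substitute into $\int_{c-\mu}^\infty(\mu+\tau)^{-z}\rho(\tau)\dr\tau$. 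Writing $\int_{c-\mu}^\infty=\int_{\R}-\int_{-\infty}^{c-\mu}$, the moment identities collapse the polynomial part integrated against $\rho$ over $\R$ to exactly $\mu^{-z}$, while its counterpart on $(-\infty,c-\mu)$ is $O_t(\mu^{-\infty})$ by Schwartz decay. The $\mu^{-z}$ so produced cancels the $\mu^{-z}$ already present in \eqref{zeta:h1-1}, so $h_1(z,\mu)$ reduces to the Taylor remainder up to super-polynomial error. On $|\tau|<\mu/2$ one has $\mu+\theta\tau\in(\mu/2,3\mu/2)$, hence $|(\mu+\theta\tau)^{-z-N}|\leq C_{t,N}\mu^{-t-N}$, giving a remainder bounded by $C_{t,N}|\binom{-z}{N}|\mu^{-t-N}\int|\tau|^N|\rho|\dr\tau$; on $|\tau|\geq\mu/2$ the Schwartz decay of $\rho$ once more dominates. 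Since $|\binom{-z}{N}|\leq C_{t,N}\langle s\rangle^N$ by the product representation, the refined estimate follows.

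To deduce \eqref{zeta:h1-2} for arbitrary $\vk,r>0$, set $\eta:=\ln\mu-\ln\langle s\rangle$. If $\eta\leq 0$, use the first bound and write $\mu^{-t}=\mu^{-t-\vk}\langle s\rangle^\vk\,e^{\vk\eta}$; the elementary inequality $e^{\vk\eta}(1+|\eta|)^r\leq C_{\vk,r}$ valid for $\eta\leq 0$ yields \eqref{zeta:h1-2}. If $\eta>0$, apply the refined bound with any integer $N>\vk+r$ and write $\langle s\rangle^N\mu^{-t-N}=\mu^{-t-\vk}\langle s\rangle^\vk\,e^{-(N-\vk)\eta}$; the inequality $e^{-(N-\vk)\eta}(1+\eta)^r\leq C_{\vk,r}$ for $\eta\geq 0$ completes the case. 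Absorbing $r$ into the constant $C_{t,\vk}$, we obtain the stated bound.

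The main technical obstacle is bookkeeping the cancellation: one must verify simultaneously that the three error sources---the Schwartz tail $\int_{-\infty}^{c-\mu}\tau^k\rho\dr\tau$, the contribution from $|\tau|\geq\mu/2$ to the Taylor remainder, and the boundary term $c^{-z}\rho_1(c-\mu)$---are each $O_t(\mu^{-\infty})$ uniformly on $\{\re z=t\}$, so they may be absorbed into $\langle s\rangle^N\mu^{-t-N}$. This in turn relies on splitting the $\tau$-integral at $|\tau|=\mu/2$ so that the Lagrange factor $(1+\theta\tau/\mu)^{-z-N}$ remains uniformly bounded (with constants depending only on $t$ and $N$) even though $z$ is complex. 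The growth $|\binom{-z}{N}|\leq C_{t,N}\langle s\rangle^N$ is immediate from $\binom{-z}{N}=(-z)(-z-1)\cdots(-z-N+1)/N!$.
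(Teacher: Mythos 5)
Your proof is correct and takes essentially the same approach as the paper: an integral-remainder Taylor expansion of $(\mu+\tau)^{-z}$, the vanishing moments of $\rho$ forced by Condition~\ref{rho} (which cancel the $\mu^{-z}$ term in \eqref{zeta:h1-1}), a split of the $\tau$-integral at $|\tau|\sim\mu/2$ to use Schwartz decay against the Lagrange factor, the bound $|\binom{-z}{N}|\le C_{t,N}\langle s\rangle^N$, and finally interpolation between the resulting family of estimates $|h_1|\le C_{t,N}\langle s\rangle^N\mu^{-t-N}$ to produce the logarithmic factor. The paper's interpolation is phrased via $b_{s,\mu}=\mu^{-1}\langle s\rangle$ and yours via $\eta=\ln\mu-\ln\langle s\rangle=-\ln b_{s,\mu}$, but these are the same argument.
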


\begin{proof}
The identities \eqref{zeta:h1-1} and $\int\rho(\la)\,\dr\la=1\,$
imply that
\begin{multline*}
h_1(z,\mu)= c^{-z}\rho_1(c-\mu)+\mu^{-z}\int_{-\infty}^{c-\mu}\rho(\la)\,\dr\la
+\int_{c-\mu}^{-\frac{\mu}2}\left(\mu^{-z}-(\mu+\la)^{-z}\right)\rho(\la)\,\dr\la\\
+\int_{\frac{\mu}2}^\infty\left(\mu^{-z}-(\mu+\la)^{-z}\right)\rho(\la)\,\dr\la
+\int_{-\frac{\mu}2}^{\frac{\mu}2}\left(\mu^{-z}-(\mu+\la)^{-z}\right)\rho(\la)\,\dr\la
\,.
\end{multline*}
Since $\rho$ is rapidly decreasing, the first four terms in the
right hand side are bounded by $C_{t,m}\,\mu^{-m}\,$ for all $m>0$.
Therefore we only need to estimate
$\,\int_{-\frac{\mu}2}^{\frac{\mu}2}\left((\mu+\la)^{-z}-\mu^{-z}\right)
\rho(\la)\,\dr\la\,$.

By Taylor's formula,
\begin{multline}\label{zeta:taylor}
(\mu+\la)^{-z}-\mu^{-z} =\sum_{k=1}^{m-1}\,(-1)^k\,
\frac{\Gamma(z+k)}{k!\,\Gamma(z)}\,\la^k\mu^{-z-k}\\
+(-1)^m\,\frac{\Gamma(z+m)}{(m-1)!\,\Gamma(z)}\,\la^m\int_0^1(1-t)^{m-1}\,(\mu+t\la)^{-z-m}\,\dr
t\,,\qquad m=1,2,\ldots
\end{multline}
(unless $-z$ is a nonnegative integer, in which case the terms with
negative exponents $-z-k$ are absent). Substituting
\eqref{zeta:taylor} into the integral, we see that
\begin{multline}\label{zeta:h1-3}
\int_{-\frac{\mu}2}^{\frac{\mu}2}\left((\mu+\la)^{-z}-\mu^{-z}\right)\rho(\la)\,\dr\la\\
=\
\sum_{k=1}^{m-1}\,\frac{(-1)^k\,\Gamma(z+k)}{k!\,\Gamma(z)}\,\mu^{-z-k}
\int_{-\frac{\mu}2}^{\frac{\mu}2}\la^k\,\rho(\la)\,\dr\la\\
+(-1)^m\,\frac{\Gamma(z+m)}{(m-1)!\,\Gamma(z)}\int_{-\frac{\mu}2}^{\frac{\mu}2}
\int_0^1(1-\tau)^{m-1}\,(\mu+\tau\la)^{-z-m}\,\la^m\,\rho(\la)\,\dr\tau\,\dr\la\,.
\end{multline}
Since $\rho$ is rapidly decreasing, Condition~\ref{rho} implies that
the sum in the right hand side of \eqref{zeta:h1-3} is estimated by
$C_m\,\mu^{-t-m}\,\langle s\rangle^{m-1}$ for all $m \geq 1$. On
the other hand, for all $\,j,m\geq0\,$,
\begin{equation}\label{zeta:h1-4}
\int_{-\frac{\mu}2}^{\frac{\mu}2}|(\mu+\tau\la)^{-z-m}\,\la^j\,\rho(\la)|\,\dr\la\
\leq\ C_{t,m}\,\mu^{-t-m}\,,\qquad\forall\tau\in[0,1]\,.
\end{equation}
Therefore the last term in \eqref{zeta:h1-3} does not exceed
$C_{t,m}\,\mu^{-t-m}\,\langle s\rangle^m$. Thus we have
\begin{equation}\label{zeta:h6}
\left|\int_{-\frac{\mu}2}^{\frac{\mu}2}
\left(\mu^{-z}-(\mu+\la)^{-z}\right)\rho(\la)\,\dr\la\,\right|\
\leq\ C_{t,m}\,\mu^{-t}\,b_{s,\mu}^m\,,\qquad\forall m=0,1,\ldots,
\end{equation}
where $b_{s,\mu}:=\mu^{-1}\langle s\rangle$ (the estimate with $m=0$
is a particular case of \eqref{zeta:h1-4} ). Since
$\,b_{s,\mu}^\vk\,(1+|\ln b_{s,\mu}|)^{-r}\geq
C_{r,\vk}\,\min\{b_{s,\mu}^{3\vk/2},b_{s,\mu}^{\vk/2}\}\,$ whenever
$\vk>0$ and
$$
\mu^{-\vk}\,\langle s\rangle^\vk\,(1+|\ln\langle
s\rangle-\ln\mu|)^{-r}=b_{s,\mu}^\vk\,(1+|\ln b_{s,\mu}|)^{-r}\,,
$$
interpolating between the estimates \eqref{zeta:h6}, we arrive at
\eqref{zeta:h1-2}.
\end{proof}

\subsection{Estimates for $Z_{x,y;1}(z)$}\label{zeta:z1}

Lemma~\ref{zeta:lemma-h1} implies the following corollaries.

\begin{corollary}\label{z1:cor1}
$Z_{x,y;1}(z)$ is an entire function of $z$ for each $(x,y)\in
M\times M$, such that
\begin{equation}\label{z1:1}
|Z_{x,y;1}(t+is)|\ \leq\ C_t\left(\langle s\rangle^{n-t}+1\right)\,,
\qquad\forall t\ne n\,.
\end{equation}
\end{corollary}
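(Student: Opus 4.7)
The plan is to work directly from the integral representation $Z_{x,y;1}(z) = \int h_1(z,\mu)\,\dr N_{x,y}(\mu)$ furnished by \eqref{zeta:z1-0}, using the bounds on $h_1$ from Lemma~\ref{zeta:lemma-h1}. For holomorphy in $z$, the explicit formula \eqref{zeta:h1-1} shows that $h_1(z,\mu)$ is entire in $z$ for each fixed $\mu > 2c$. The pointwise Cauchy--Schwarz inequality $|\phi_j(x)\overline{\phi_j(y)}|\leq\tfrac12(|\phi_j(x)|^2+|\phi_j(y)|^2)$ gives the measure bound $|\dr N_{x,y}|\leq\tfrac12(\dr N_{x,x}+\dr N_{y,y})$; combined with Weyl's law $N_{x,x}(\la)=O(\la^n)$ and estimate \eqref{zeta:h1-2} with $\vk$ taken sufficiently large, this yields absolute and locally uniform convergence of the Stieltjes integral in $z$. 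Differentiation under the integral then delivers holomorphy on all of $\C$.

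For the quantitative bound, I would split the integration at the cutoff $\mu=\langle s\rangle$. On the short-range part $[c,\langle s\rangle]$, apply the uniform bound $|h_1(z,\mu)|\leq C_t\mu^{-t}$; on the long-range part $[\langle s\rangle,\infty)$, apply \eqref{zeta:h1-2} with some $\vk>\max(0,n-t)$ (and $r=0$), chosen so the integral converges at infinity. To convert $\int f(\mu)\,\dr|N_{x,y}|(\mu)$ into a bound with the polynomial weight $\mu^{n-1}\,\dr\mu$, use $|\dr N_{x,y}|\leq\tfrac12(\dr N_{x,x}+\dr N_{y,y})$ and Riemann--Stieltjes integration by parts against $N_{x,x}(\la)=C\la^n+O(\la^{n-1})$. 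The short-range piece then contributes $C_t(\langle s\rangle^{n-t}+1)$, coming from $\int_c^{\langle s\rangle}\mu^{n-t-1}\,\dr\mu$, while the long-range piece contributes $C_{t,\vk}\,\langle s\rangle^\vk\cdot\langle s\rangle^{n-t-\vk}=C_{t,\vk}\,\langle s\rangle^{n-t}$. Adding the two bounds gives \eqref{z1:1}.

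For $t>n$ a simpler variant suffices: one may take $\vk=0$ on the whole interval $[c,\infty)$, since the single integral $\int_c^\infty\mu^{n-t-1}\,\dr\mu$ converges; this produces the constant bound consistent with $\langle s\rangle^{n-t}\leq1$ in that range. The main point to watch is exponent matching, i.e.\ choosing $\vk$ large enough for long-range convergence while still allowing the powers of $\langle s\rangle$ to collapse to $\langle s\rangle^{n-t}$; and verifying that $t=n$ is the sole exclusion, which it is, since exactly at this value the antiderivative $\int\mu^{n-t-1}\,\dr\mu$ acquires a logarithm and the above control breaks down.
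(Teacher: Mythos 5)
Your proof is correct, and it follows the same basic structure as the paper's proof (holomorphy by differentiation under the integral, then a quantitative bound via Lemma~\ref{zeta:lemma-h1} and Weyl's law) but differs in two substantive details that are worth recording.

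First, to handle the fact that $N_{x,y}$ is not monotone for $x\neq y$, you bound the total variation measure by $|\dr N_{x,y}|\leq\tfrac12(\dr N_{x,x}+\dr N_{y,y})$ via the AM--GM inequality applied termwise; the paper instead uses a polarization trick, writing $2\re N_{x,y}=N_{x,y}(\cdot;1,1)-N_{x,x}-N_{y,y}$ and $2i\im N_{x,y}=N_{x,y}(\cdot;i,1)-N_{x,x}-N_{y,y}$, with $N_{x,y}(\la;a_1,a_2):=\sum_{\la_j<\la}|a_1\phi_j(x)+a_2\phi_j(y)|^2$ nondecreasing and $O(\la^n)$. Both reductions end at ``integrate against a nondecreasing measure controlled by Weyl,'' and both are sound; yours is slightly more direct. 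Second, for the $\mu$-integral the paper applies \eqref{zeta:h1-2} with $\vk=n-t$ (resp.\ the plain bound $|h_1|\leq C_t\mu^{-t}$ when $t>n$) uniformly over the whole half-line and relies on the logarithmic factor $(1+|\ln\langle s\rangle-\ln\mu|)^{-2}$, i.e.\ on taking $r=2$, to make the single integral converge. You instead split at $\mu=\langle s\rangle$, use the plain bound $C_t\mu^{-t}$ on the short range and \eqref{zeta:h1-2} with $\vk>\max(0,n-t)$ (dropping the logarithmic factor, which only strengthens the inequality) on the long range, and add. This is a perfectly valid alternative; it sidesteps the log-weight entirely, at the cost of a cutoff and of keeping track of boundary terms at $\mu=\langle s\rangle$ in the Stieltjes integration by parts (which do contribute exactly $O(\langle s\rangle^{n-t})$, consistent with your claim). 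One small presentational remark: you invoke \eqref{zeta:h1-2} ``with $r=0$,'' which the lemma does not literally state; it is cleaner to say you apply the lemma with any $r>0$ and then use $(1+|\ln\langle s\rangle-\ln\mu|)^{-r}\leq1$. Your final observation that the exclusion $t\neq n$ is exactly where $\int\mu^{n-t-1}\dr\mu$ picks up a logarithm is the right explanation and matches Remark~\ref{z1:remark2}.
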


\begin{proof}
In view of \eqref{zeta:h1} and \eqref{zeta:h1-2}, one can
differentiate under the integral sign in the definition of
$Z_{x,y;1}(z)$. Therefore the function $Z_{x,y;1}(z)$ is analytic.

Let $\tilde
N_{x,y}(\la;a_1,a_2):=\sum_{\la_j<\la}|a_1\,\vf_j(x)+a_2\,\vf_j(y))|^2$
where $a_1,a_2$ are complex constants such that $|a_1|^2+|a_2|^2=2$.
The function $N_{x,y}(\la;a_1,a_2)$ is nondecreasing and, in view of
the Weyl formulae \eqref{intro:bound1}, is estimated by
$\,C\,\la^n$. We have
\begin{align*}
2\,\re N_{x,y}(\la)&=N_{x,y}(\la;1,1)-N_{x,x}(\la)-N_{y,y}(\la)\,,\\
2i\,\im N_{x,y}(\la)&=N_{x,y}(\la;i,1)-N_{x,x}(\la)-N_{y,y}(\la)\,.
\end{align*}
Therefore it is sufficient to prove that the estimate
\eqref{z1:1} holds for the integral $\int_{2c}^\infty
h_1(z,\la)\,\dr G(\la)$, where $G(\la)$ is a nondecreasing
function bounded by $\,C\,\la^n\,$.

If $t<n$ then \eqref{zeta:h1-2} with $\vk=n-t$ and $r=2$ implies
that
\begin{multline*}
\int_{2c}^\infty h_1(z,\la)\,\dr G(\la)\ \leq\ C_t\,\langle
s\rangle^{n-t}\int_{2c}^\infty
\la^{-n}\,(1+|\ln\langle s\rangle-\ln\la|)^{-2}\,\dr G(\la)\\
\leq\ C_t\,\langle s\rangle^{n-t}\int_{2c}^\infty
\la^{-n-1}\,(1+|\ln\langle s\rangle-\ln\la|)^{-2}\,G(\la)\,\dr\la\\
\leq\ C_t\,\langle s\rangle^{n-t}\int_{2c}^\infty
(1+|\ln\langle s\rangle-\ln\la|)^{-2}\,\la^{-1}\,\dr\la\ \leq\
C_t\,\langle s\rangle^{n-t}\,.
\end{multline*}
If $t>n$ then the required estimate is obtained in a similar
way, with the use of the inequality $|h_1(z,\la)|\leq
C_t\,\la^{-t}\,$.
\end{proof}

\begin{remark}\label{z1:remark1}
Note that \eqref{z1:1} holds for $x=y$.
\end{remark}

\begin{corollary}\label{z1:cor2}
\begin{equation}\label{z1:2}
\int_M|Z_{x,y;1}(t+is)|^2\,\dr y\ \leq\
C_t\left(\langle s\rangle^{n-2t}+1\right)\,,\qquad\forall t\ne\frac{n}2\,,
\quad\forall x\in M\,.
\end{equation}
\end{corollary}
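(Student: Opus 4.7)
The plan is to mimic the derivation of \eqref{n1:proof2}: reduce the $L^2$-norm of $Z_{x,y;1}(z)$ over $y\in M$ to a single integral against the diagonal measure $\dr N_{x,x}$, and then apply Lemma~\ref{zeta:lemma-h1} together with the Weyl bound $N_{x,x}(\mu)\leq C\mu^n$. Expanding \eqref{zeta:z1-0} in the eigenbasis gives
$$Z_{x,y;1}(z)\ =\ \sum_j h_1(z,\la_j)\,\phi_j(x)\,\overline{\phi_j(y)},$$
so the orthonormality of $\{\phi_j\}$ yields the key identity
$$\int_M|Z_{x,y;1}(z)|^2\,\dr y\ =\ \sum_j |h_1(z,\la_j)|^2\,|\phi_j(x)|^2\ =\ \int_0^\infty|h_1(z,\mu)|^2\,\dr N_{x,x}(\mu),$$
in direct analogy with \eqref{n1:proof1}. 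As in the proof of Corollary~\ref{z1:cor1}, it would be slightly more rigorous to start from the nondecreasing proxy $N_{x,y}(\la;a_1,a_2)$ used there, but that is a routine modification.

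To estimate the resulting Stieltjes integral I split it at $\mu=\langle s\rangle$. On the low-frequency part $\mu\leq\langle s\rangle$ I use the uniform bound $|h_1(z,\mu)|\leq C_t\mu^{-t}$ from Lemma~\ref{zeta:lemma-h1}; integration by parts, combined with $N_{x,x}(\mu)=O(\mu^n)$, converts this into a boundary term of size $\langle s\rangle^{n-2t}$ plus $C_t\int_c^{\langle s\rangle}\mu^{n-2t-1}\,\dr\mu$. Both pieces are bounded by $C_t\langle s\rangle^{n-2t}$ when $t<\tfrac n2$ and by $C_t$ when $t>\tfrac n2$. On the high-frequency part $\mu>\langle s\rangle$, I invoke the refined bound \eqref{zeta:h1-2} with any $\vk>\max\{0,\tfrac n2-t\}$ and any fixed $r>0$; after the change of variable $u=\mu/\langle s\rangle$ the integral factors as
$$\langle s\rangle^{n-2t}\int_1^\infty u^{n-1-2t-2\vk}\,(1+\ln u)^{-2r}\,\dr u,$$
which converges absolutely thanks to the choice of $\vk$, yielding another contribution $\leq C_t\langle s\rangle^{n-2t}$. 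Summing the two regions gives the desired bound $C_t(\langle s\rangle^{n-2t}+1)$.

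The only delicate point is the exclusion of $t=\tfrac n2$: at that exponent the low-frequency integral becomes $\int\mu^{-1}\,\dr\mu$ and the high-frequency rescaling produces a matching logarithm in $\langle s\rangle$, neither of which fits into $\langle s\rangle^{n-2t}+1=2$. This is the structural obstacle anticipated in Remark~\ref{z1:remark2}, and it is the reason why the logarithmic refinement $(1+|\ln\langle s\rangle-\ln\mu|)^{-r}$ in Lemma~\ref{zeta:lemma-h1} alone does not rescue the endpoint. Handling the lower cutoff $c$ and the jump discontinuities of $N_{x,x}$ in the integration-by-parts step requires a little care but is absorbed into the constant $C_t$ exactly as in the proof of Corollary~\ref{z1:cor1}.
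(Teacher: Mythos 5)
Your proof is correct and uses the same essential mechanism as the paper: expand $Z_{x,y;1}$ in the eigenbasis, exploit orthonormality to reduce $\int_M|Z_{x,y;1}(z)|^2\,\dr y$ to the diagonal Stieltjes integral $\int|h_1(z,\mu)|^2\,\dr N_{x,x}(\mu)$, and finish with Lemma~\ref{zeta:lemma-h1} plus the Weyl bound. The only departure is bookkeeping: the paper estimates the diagonal integral by a single global application of \eqref{zeta:h1-2} (with $\vk=\tfrac n2-t$ when $t<\tfrac n2$, and the crude bound $|h_1|\leq C_t\mu^{-t}$ when $t>\tfrac n2$), whereas you split at $\mu=\langle s\rangle$ and use the crude bound below and the refined bound above; both give $\langle s\rangle^{n-2t}$ for exactly the same reasons, and your explanation of the failure at $t=\tfrac n2$ matches Remark~\ref{z1:remark2}.
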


\begin{proof}
In the same way as in \eqref{n1:proof1}, we obtain
$$
\int_M|Z_{x,y;1}(z)|^2\,\dr y\ =\ \int|h_1(z,\la)|^2\,\dr
N_{x,x}(\la)\,.
$$
If $\,t<n/2\,$ then \eqref{zeta:h1-2} with $\,\vk=\frac{n}2-t\,$ and
$\,r=2\,$ implies that
\begin{multline*}
\int|h_1(z,\la)|^2\,\dr N_{x,x}(\la)\ \leq\ C_{t,\vk}\,\langle
s\rangle^{n-2t}\int_\eps^\infty\la^{-n}\,(1+|\ln\langle
s\rangle-\ln\la|)^{-2}\,\dr
N_{x,x}(\la)\\
\leq\ C_t\,\langle
s\rangle^{n-2t}\int_\eps^\infty\la^{-n-1}\,(1+|\ln\langle
s\rangle-\ln\la|)^{-2}\, N_{x,x}(\la)\,\dr\la\,.
\end{multline*}
Using the Weyl formula and estimating integrals as in the proof of
the previous corollary, we see that the right hand side is not
greater than $C_t\,\langle s\rangle^{n-2t}$\,.

If $t>\frac{n}2$ then the required inequality is obtained in the
same way, with the use of the estimate $|h_1(z,\la)|^2\leq
C_t\,\la^{-2t}$.
\end{proof}

\subsection{Estimates for $Z_{x,y;0}(z)$}\label{zeta:z0}

If $\,x\ne y\,$ then $\,Z_{x,y;0}(z)\,$ is an entire function of
$\,z\,$. Our goal is to estimate $\,Z_{x,y;0}(z)\,$ uniformly with
respect to $\,s\,$ and $\,x,y\in M\,$. Further on
\begin{enumerate}
\item[$\bullet$]
$O_t(d_{x,y}^{\,p})$ denotes a function which is estimated by
$\,C_t(d_{x,y}^{\,p}+1)\,$.
\end{enumerate}

Obviously, if $\,|f(\la)|\leq C\,\langle\la\rangle^{-m}\,$ for all
$\,\la\in\R_+\,$ then $\,\int_c^\infty\la^{-z}\,f(\la)\,\dr\la\,$ is
an analytic function in the half-plane $\,\{z:t>1-m\}\,$ and
$\,\left|\int_c^\infty\la^{-z}\,f(\la)\,\dr\la\right|\leq C_t\,$ for
all $\,t>1-m\,$. In particular, if $\,\Psi_{x,y}(\la)=N_{x,y}(\la)-
N_{x,y}(-\la)\,$ then, in view of \eqref{n0-Psi}, we have
$$
Z_{x,y;0}(z)=z\int_c^\infty
\la^{-z-1}\,N_{x,y;0}(\la)\,\dr\la=\int_c^\infty
\la^{-z}\left(\rho*\Psi_{x,y}\right)'(\la)\,\dr\la\;+\; O_t(1)\,.
$$

Let us assume that $\de$ in Condition~\ref{rho} is small enough, so
that the Hadamard representation \eqref{n0:hadamard} holds on
$\supp\hat\rho$. Then the Fourier transform of the derivative
$\,\Psi'_{x,y}\,$ admits an asymptotic expansion of the form
\eqref{n0:hadamard} on $\,\supp\hat\rho\,$. This implies that
$$
\left(\rho*\Psi_{x,y}\right)'(\la)\,-\,
\int_c^\infty\rho(\la-\mu)\,\Psi'_{0;x,y}(\mu)\,\dr\mu\ =\
O(\la^{-\infty})\,,
$$
uniformly with respect to $\,x,y\in M\,$, where
$\,\Psi'_{0;x,y}(\mu)\,$ is a function whose asymptotic expansion
for $\,\mu\to\infty\,$ is obtained from \eqref{asymp} by
differentiating each term with respect to $\la\,$ and putting
$\,\la=\mu\,$. Thus we have
\begin{equation}\label{Z0-Psi'0}
Z_{x,y;0}(z)\ =\ \int_c^\infty\int_c^\infty
\la^{-z}\rho(\la-\mu)\,\Psi'_{0;x,y}(\mu)\,\dr\mu\,\dr\la\ +\
O_t(1)\,.
\end{equation}

For all $\,m=0,1,2\ldots\,$, we have
$\,\frac{\dr^m}{\dr\tau^m}\,J_p(\tau)\leq C\,\tau^{p-m}\,$ for small
values of $\,\tau\,$ and
$$
\frac{\dr^m}{\dr\tau^m}J_p(\tau)\ \sim\ \tau^{-\frac12}\, \cos\tau
\sum_{k=0}^\infty a_{m,k}\,\tau^{-k} +\tau^{-\frac12}\,\sin\tau
\sum_{k=0}^\infty b_{m,k}\,\tau^{-k}\,,\quad\tau\to\infty\,,
$$
where $a_{m,k}$ and $b_{m,k}$ are some real coefficients  (see, for
example, \cite[8.440 and 8.451(1)]{GR}). Therefore the asymptotic
formula for $\,\Psi'_{0;x,y}\,$ implies that
$\,|\Psi'_{0;x,y}(\mu)|\leq C\,\langle\mu\rangle^{n-1}\,$ for all
$\,\mu\in(c,d_{x,y}^{\,-1})\,$ and
\begin{multline}\label{Psi'0}
\Psi'_{0;x,y}(\mu)\ =\ \sum_{j+k<N}
u_{j,k}(x,y)\,d_{x,y}^{\,2j+1-n}\,
\left(d_{x,y}\,\mu\right)^{\frac{n-1}2-j-k}\,
\exp\left(i\,d_{x,y}\,\mu\right)\\
+\sum_{j+k<N}v_{j,k}(x,y)\,d_{x,y}^{\,2j+1-n}\,
\left(d_{x,y}\,\mu\right)^{\frac{n-1}2-j-k}\,
\exp\left(-i\,d_{x,y}\,\mu\right)\; +\;R_N(x,y,\mu)
\end{multline}
for all $\,\mu\in(d_{x,y}^{\,-1},\infty)\,$ and $\,N=1,2,\ldots\,$,
where $\,u_{j,k}\,,\,\,v_{j,k}\,$ are bounded functions and
$$
|R_N(x,y,\mu)|\ \leq\ C\,
d_{x,y}^{\,1-n}\,\left(d_{x,y}\,\mu\right)^{\frac{n-1}2-N}\,,
\qquad\forall\mu\in(d_{x,y}^{\,-1},\infty)\,.
$$
In particular, $\,|\Psi'_{0;x,y}(\mu)|\leq
C\,d_{x,y}^{\,1-n}\left(d_{x,y}\,\mu\right)^{\frac{n-1}2}\leq
C\,\mu^{n-1}\,$ whenever $\,\mu\geq d_{x,y}^{\,-1}\,$ and,
consequently, $\,|\Psi'_{0;x,y}(\mu)|\leq
C\,\langle\mu\rangle^{n-1}\,$ for all $\,\mu\geq c\,$.

The above estimates and the obvious inequalities
\begin{equation}\label{la-mu}
\langle\la\rangle^{-1}\langle\mu\rangle\leq2\langle\la-\mu\rangle\,,\qquad
\langle\mu\rangle^{-1}\langle\la\rangle\leq2\langle\la-\mu\rangle
\end{equation}
imply that
\begin{align*}
\left|\la^{-z}\rho(\la-\mu)\,\Psi'_{0;x,y}(\mu)\right|\ &\leq\
C_t\,\langle\la\rangle^{n-t-1}\langle\la-\mu\rangle^{n-1}\,|\rho(\la-\mu)|\,,\\
\left|\la^{-z}\rho(\la-\mu)\,\Psi'_{0;x,y}(\mu)\right|\ &\leq\
C_t\,\langle\mu\rangle^{n-t-1}\langle\la-\mu\rangle^{|t|}\,|\rho(\la-\mu)|
\end{align*}
and
\begin{multline*}
\left|\la^{-z}\rho(\la-\mu)\, R_N(x,y,\mu)\right|\ \leq\
C\,d_{x,y}^{\,t-n+1}\left|\left(d_{x,y}\la\right)^{-t}\rho(\la-\mu)
\,\left(d_{x,y}\mu\right)^{\frac{n-1}2-N}\right|\\
\leq\ C_t\,d_{x,y}^{\,t-n+1} \langle
d_{x,y}\la\rangle^{\frac{n-1}2-N-t}\langle
d_{x,y}\la-d_{x,y}\mu\rangle^{\frac{n-1}2+N}|\rho(\la-\mu)|
\end{multline*}
for all $\la,\mu>d_{x,y}^{\,-1}$. Since the function $\,\rho\,$ is
rapidly decreasing, integrating the above estimates over $\la$ and
$\mu$, we obtain
\begin{equation}\label{Z0-1}
\left|\,\int_c^{d_{x,y}^{\,-1}}\int_c^\infty\la^{-z}\rho(\la-\mu)\,
\Psi'_{0;x,y}(\mu)\,\dr\mu\,\dr\la\,\right|\ \leq\ \begin{cases}
C_t\,(d_{x,y}^{\,t-n}+1)\,,&t\ne n\,,\\
C\,|\ln d_{x,y}|\,,&t=n\,,
\end{cases}
\end{equation}
\begin{equation}\label{Z0-2}
\left|\,\int_{d_{x,y}^{\,-1}}^\infty\int_c^{d_{x,y}^{\,-1}}
\la^{-z}\rho(\la-\mu)\, \Psi'_{0;x,y}(\mu)\,\dr\mu\,\dr\la\,\right|\
\leq\ \begin{cases}
C_t\,(d_{x,y}^{\,t-n}+1)\,,&t\ne n\,,\\
C\,|\ln d_{x,y}|\,,&t=n\,,
\end{cases}
\end{equation}
and
\begin{multline}\label{Z0-3}
\left|\int_{d_{x,y}^{\,-1}}^\infty\int_{d_{x,y}^{\,-1}}^\infty\la^{-z}\rho(\la-\mu)\,
R_N(x,y,\mu)\,\dr\mu\,\dr\la\,\right|\\
\leq\ C_t\,d_{x,y}^{\,t-n-1}\int_1^\infty\int_1^\infty
\langle\la\rangle^{\frac{n-1}2-N-t}\langle
\la-\mu\rangle^{\frac{n-1}2+N}|\rho\left(d_{x,y}^{\,-1}(\la-\mu)\right)|\,\dr\mu\,\dr\la\\
\leq\
C_t\,d_{x,y}^{\,t-n-1}\left(\int_1^\infty\langle\la\rangle^{\frac{n-1}2-N-t}\,\dr\la\right)
\left(\int \langle
\tau\rangle^{\frac{n-1}2+N}|\rho\left(d_{x,y}^{\,-1}\tau\right)|\,\dr\tau\right)
\ =\ O_t(d_{x,y}^{\,t-n})
\end{multline}
for all $\,N>\frac{n+1}2-t\,$.

It remains to estimate the integrals
\begin{multline}\label{Z0-4}
\int_{d_{x,y}^{\,-1}}^\infty\int_{d_{x,y}^{\,-1}}^\infty\la^{-z}\rho(\la-\mu)\,
d_{x,y}^{\,2j+1-n}\, \left(d_{x,y}\,\mu\right)^{\frac{n-1}2-j-k}\,
e^{\pm i\,d_{x,y}\,\mu}\,\dr\mu\,\dr\la\\
=\ d_{x,y}^{\,2j+t-n-1+is}\,\int_1^\infty\int_1^\infty
\la^{-z}\rho\left(d_{x,y}^{\,-1}(\la-\mu)\right)\,\mu^{\frac{n-1}2-j-k}\,
e^{\pm i\,\mu}\,\dr\mu\,\dr\la
\end{multline}
generated by the main asymptotic terms in \eqref{Psi'0}. Expanding
$\,\mu^{\frac{n-1}2-j-k}\,$ by Taylor's formula at $\mu=\la$, 
we see that the right hand side of
\eqref{Z0-4} coincides with
$$
d_{x,y}^{\,2j+t-n-1+is}\int_1^\infty\psi(z,\la,\mu)\left(\int_1^\infty
\rho\left(d_{x,y}^{\,-1}(\la-\mu)\right)\, e^{\pm
i\,\mu}\,\dr\mu\right)\dr\la\;+\;O_t(d_{x,y}^{\,2j+t-n})\,,
$$
where
$$
\psi(z,\la,\mu)\ =\ \sum_{m=0}^l\,\binom{\frac{n-1}2-j-k}{m}
\la^{\frac{n-1}2-j-k-m-z}\,(\mu-\la)^m
$$
with an arbitrary $\,l\geq\frac{n+1}2-t-j-k\,$, and the remainder estimate follows from the inequalities
$$
|\psi(z,\la,\mu) - \la^{-z}\mu^{\frac{n-1}2-j-k}| \leq C \la^{-t} (\mu-\la)^{l+1}
(\mu^{\frac{n-1}2-j-k-l-1} + \la^{\frac{n-1}2-j-k-l-1})
$$
and
$$
\mu^{\frac{n-1}2-j-k-l-1} \leq C \langle\mu-\la\rangle^{\frac{n-1}2+j+k+l+1} \la^{\frac{n-1}2-j-k-l-1}.
$$
Note that the last inequality is a consequence of \eqref{la-mu}.

Obviously,
\begin{multline*}
d_{x,y}^{\,-1}\int_1^\infty\int_{-\infty}^{1}\la^p\,(\mu-\la)^m
\left|\rho\left(d_{x,y}^{\,-1}(\la-\mu)\right)\right|\dr\mu\,\dr\la\\
\leq\ d_{x,y}^{\,m}\int_1^\infty
\int_{d_M^{-1}(\la-1)}^{\infty}\la^p
\left|\mu^m\rho\left(\mu\right)\right|\dr\mu\,\dr\la\ \leq\
C_{p,m}\,,\qquad\forall p\in\R\,,\ \forall m=0,1,2,\ldots,
\end{multline*}
where $\,d_M\,$ is the diameter of $\,M\,$. This estimate and
\eqref{la-mu} imply that the integral \eqref{Z0-4} is equal to
\begin{multline}\label{Z0-psi}
d_{x,y}^{\,2j+t-n-1+is}\,\int_1^\infty\psi(z,\la,\mu)\left(\int_{-\infty}^\infty
\rho\left(d_{x,y}^{\,-1}(\la-\mu)\right)\, e^{\pm
i\,\mu}\,\dr\mu\right)\dr\la\\
=\
\sum_{m=0}^l\,\binom{\frac{n-1}2-j-k}{m}\,d_{x,y}^{\,2j+t-n+m+is}(-i)^m\,
\hat\rho^{(m)}(d_{x,y})\,\int_1^\infty\la^{\frac{n-1}2-j-k-m-z}\,
e^{\pm i\la} \,\dr\la
\end{multline}
modulo $\,O_t(d_{x,y}^{\,2j+t-n})\,$, where $\hat\rho^{(m)}$ is the
$\,m$th derivative of the Fourier transform.

For each $\,p\in\R\,$, the integral $\,\int_1^\infty\la^{p-z}\,
e^{\pm i\la}\,\dr\la\,$ defines an analytic function on the
half-plane $\,\{z:t>p+1\}\,$, where it is bounded on each vertical
line by a constant $\,C_t\,$. This function admits an analytic
continuation to the whole complex plane, obtained by replacing
$\,e^{\pm i\la}\,$ with $\,(\mp i)^m\,\frac{\dr^m}{\dr\la^m}\,e^{\pm
i\la}\,$ and integrating by parts. This continuation coincides with
the difference between the meromorphic continuations of the
integrals $\,\int_0^\infty\la^{p-z}\,e^{\pm i\la}\,\dr\la\,$ and
$\,\int_0^1\la^{p-z}\,e^{\pm i\la}\,\dr\la\,$. According to
\cite[3.381]{GR} and \cite[8.328(1)]{GR},
$$
\int_0^\infty\la^{p-z}\,e^{\pm i\la}\,\dr\la\ =\ \pm\, i\,e^{\pm\,i
\pi(p-z)/2}\,\Gamma(p+1-z)
$$
and
$$
\left|e^{\pm\,i
\pi(p-z)/2}\,\Gamma(p+1-z)\right|=\left|e^{\pi|s|/2}\,\Gamma(p+1-t+is)\right|\leq
C_{p-t}\,|s|^{p-t+1/2}
$$
whenever $\,|s|>1\,$. Replacing $\,\la^{p-z}\,$ with
$\,(m+p-z)^{-1}\ldots(1+p-z)^{-1}\frac{\dr^m}{\dr\la^m}\,\la^{m+p-z}\,$
and integrating by parts, we see that
$\,\left|\int_0^1\la^{p-z}\,e^{\pm i\la}\,\dr\la\,\right|\leq
C_{p-t}\,$ whenever $\,|s|>1\,$. Therefore
$$
\left|\int_1^\infty\la^{p-z}\, e^{\pm i\la}\,\dr\la\,\right|\ \leq\
C_{p-t}\,(1+\langle s\rangle^{p-t+1/2})\,,\qquad\forall p\in\R\,,\ \forall
z\in\C\,.
$$

The above inequality implies that the integral in the left hand side
of \eqref{Z0-psi} is estimated by
$\,C_t\,d_{x,y}^{\,2j+t-n}\,(1+\langle
s\rangle^{\frac{n}2-t-j-k})\,$ and, consequently,
\begin{multline}\label{Z0-5}
\left|\int_{d_{x,y}^{\,-1}}^\infty\int_{\mu>d_{x,y}^{\,-1}}\la^{-z}\rho(\la-\mu)\,
d_{x,y}^{\,2j+1-n}\, \left(d_{x,y}\,\mu\right)^{\frac{n-1}2-j-k}\,
e^{\pm
i\,d_{x,y}\,\mu}\,\dr\mu\,\dr\la\,\right|\\
\leq\ C_t\,d_{x,y}^{\,2j+t-n}\,(1+\langle
s\rangle^{\frac{n}2-t-j-k})\,.
\end{multline}
Now, putting together \eqref{Z0-1}--\eqref{Z0-3} and \eqref{Z0-5},
we obtain
\begin{equation}\label{Z0-6}
\left|Z_{x,y;0}(t+is)\right|\ \leq\ \begin{cases}
C_t\left(d_{x,y}^{\,t-n}\,\langle s\rangle^{\frac{n}2-t}+d_{x,y}^{\,t-n}+1\right)\,,& t\ne n\,,\\
C\left(\langle s\rangle^{\frac{n}2-t}+|\ln d_{x,y}|\right)\,,&
t=n\,.
\end{cases}
\end{equation}

\subsection{Proof of Theorems \ref{zeta:theorem1} and
\ref{zeta:theorem2}}

Theorem \ref{zeta:theorem1} is an immediate consequence of
\eqref{z1:1} and \eqref{Z0-6}. Since the function
$\left(d_{x,y}^{\,2t-n-\eps}+1\right)^{-1}$ is bounded and is
estimated by $\,d_{x,y}^{\,n-2t+\eps}\,$ for small values of
$\,d_{x,y}\,$, Theorem~\ref{zeta:theorem2} follows from \eqref{z1:2}
and \eqref{Z0-6}.

\subsection*{Acknowledgments} The authors would like to thank
Y.~Kannai, G. Paternain, B.~Randol, Z.~Rudnick and
M.~Sodin for useful discussions. We are particularly grateful to
D.~Jakobson for suggesting to look at the almost periodic properties
of the spectral function. The research of H.L. was supported by
NSERC doctoral scholarship. I.P. was supported by NSERC and FQRNT.
Y.S. was supported by the EPSRC grant GR/T25552/01. Part of this
paper was written when Y.S. was visiting the Analysis Laboratory at
the CRM, Montr\'eal. Its hospitality is greatly appreciated.

\end{document}